\documentclass[12pt,a4paper]{article}

\RequirePackage{etex}
\usepackage[utf8]{inputenc}
\usepackage{amsmath, amssymb, amsthm, amsfonts, mathrsfs}
\usepackage{bbm, commath}
\usepackage{multicol}
\usepackage{blindtext}
\usepackage{graphicx}
\usepackage{tikz-network}
\usetikzlibrary {positioning, patterns, calc}
\usetikzlibrary{graphs, graphs.standard, quotes}
\usepackage{authblk}
\usepackage{enumitem}
\usepackage[colorlinks=true,linkcolor=black,citecolor=black]{hyperref}
\usepackage{tabularx}
\usepackage{array}
\usepackage{subcaption}

\usepackage{natbib}
 \usepackage{setspace}
 \let\oldbibitem\bibitem
 \renewcommand{\bibitem}{\setstretch{1.2}\oldbibitem}

\newtheorem{thm}{Theorem}

\newtheorem{lem}{Lemma}
\newtheorem{cor}{Corollary}
\newtheorem{exm}{Example}

\newtheorem{rem}{Remark}

\def \e {{\bf e}}
\def \u {{\bf u}}
\def \v {{\bf v}}
 
\def \x {{\bf x}}

\def \e {{\mathbf e}}

\newcommand{\ob}[1]{\left(#1\right)}
\newcommand{\cb}[1]{\left\lbrace #1\right\rbrace}
\newcommand{\tb}[1]{\left[#1\right]}

\title{\it Pretty good plus state transfer in cycles}
\author[1]{Sarojini Mohapatra}
\author[2]{Hiranmoy Pal}
\affil[1,2]{National Institute of Technology Rourkela, India-769008 palh@nitrkl.ac.in}
\date{\today}

\begin{document}
	
	\maketitle

	
		\begin{abstract}
        We investigate fractional revival in graphs with respect to the adjacency, Laplacian, and signless Laplacian matrices. We observe that, under certain conditions, fractional revival is preserved under graph complementation. 
       Then we establish a connection between fractional revival in a graph and in its double cover, and obtain a complete characterization of pretty good plus state transfer in cycles and their complements. This leads to characterizations of pretty good vertex state transfer in weighted paths with potential. 
\\\\
   
     \noindent{\it Keywords:}  Spectra of graphs, Circulant graph, Continuous-time quantum walk, Fractional revival, Pretty good plus state transfer.  
\\\\{\it MSC: 15A16, 05C50, 81P45.}
	\end{abstract}

	
	\newpage

\section{Introduction} 
The transfer of quantum states in quantum spin networks is a fundamental problem in quantum information theory. 
\emph{Continuous-time quantum walks}, introduced by Farhi and Gutmann \cite{farhi}, provide a useful framework for analyzing such quantum transport phenomena.
A continuous-time quantum walk on a graph $G$ is governed  by the transition matrix 
\[U_{M(G)}(t)=\exp{(itM(G))}=\sum_{k\geq 0}\frac{(it)^k}{k!}(M(G))^k,\quad \text{where $t\in\mathbb{R}, $}\]
and M(G) denotes the Hamiltonian associated with $G.$ In this context, we consider $M(G)$ to be the adjacency matrix, the Laplacian matrix, or the signless Laplacian matrix of $G.$ Whenever the context is clear, we write 
$U(t)$ instead of 
$U_{M(G)}(t).$ Let $G (V(G), E(G), w)$ be a weighted undirected graph with vertex set $V(G)$ and edge set $E(G),$ where the weight function $w:E(G)\to \mathbb{R}$ assigns a real number to each edge of $G.$ The \textit{adjacency matrix} $A(G)$ is defined by $A(G)_{j,k}=w(j,k)$ for $(j,k)\in E(G),$ and $0$ otherwise. A \textit{potential on $G$} is a diagonal matrix $\Delta,$ where $\Delta_{j,j}$  is the potential at the vertex $j.$ For a graph $G$ with potential, the Hamiltonian $M(G)$ is considered as $A(G)+\Delta.$
Unless otherwise specified, we assume that $G$ is finite, simple, and undirected. In the case of a simple graph, each edge has weight $1,$ and the potential $\Delta$ is the zero matrix. The \textit{Laplacian} and \textit{signless Laplacian} matrices of a simple graph are defined by $L(G)=D(G)-A(G),$ and $Q(G)=D(G)+A(G),$ respectively, where $D(G)$ is the degree matrix of $G.$  Let $\lambda_1,\lambda_2,\ldots,\lambda_d$ be the distinct eigenvalues of $M(G)$ with corresponding eigenprojection matrices $E_{\lambda_1},E_{\lambda_2},\ldots,E_{\lambda_d}.$ Then the spectral decomposition of $U_{M(G)}(t)$ is given by
   \[U_{M(G)}(t)=\sum_{j=1}^{d}\exp{(it\lambda_j)}E_{\lambda_j}.\]

A \textit{real pure state} is represented by a unit vector in $\mathbb{R}^n$ \cite{god7}. Let $a$ and $b$ be two vertices in a graph $G.$ The characteristic vector $\e_a$ is called the \textit{vertex state} associated with $a.$ For a non-zero real number $s,$ a real pure state of the form $\frac{1}{\sqrt{1+s^2}}\ob{\e_a+s\e_b}$ is called an \textit{$s$-pair state} \cite{kim}. In particular, when $s=-1,$ the state is called a \textit{pair state} and for $s=1,$ it is called a \textit{plus state}. 
The \textit{eigenvalue support} of a state $\u$ relative to $M(G)$ is the set 
$\{\lambda_j:E_{\lambda_j}\u\neq 0\}.$
A state $\u$ is called a fixed state if and only if $\u$ is an eigenvector of $M(G)$ associated with the lone eigenvalue in the eigenvalue support of $\u$ \cite{god7}.
A graph $G$ is said to exhibit \emph{perfect state transfer (PST)} between two linearly independent real pure states $\u$ and $\v$ if there exists a
time $\tau$ and a phase factor $\gamma\in\mathbb{C}$ with $|\gamma|=1$ such that
\begin{equation}\label{5eq1}
  U_{M(G)}(\tau)\u=\gamma\v.  
\end{equation} 
If $\u=\v,$ then the state 
$\u$ is said to be periodic in 
$G.$
When $\u$ and $\v$ in \eqref{5eq1} are both vertex states, pair states, or plus states, then the corresponding PST is called \textit{vertex PST, pair PST}, or \textit{plus PST}, respectively.  PST in quantum spin networks was first introduced by Bose \cite{bose} and has attracted considerable attention over the past two decades, with extensive research devoted to characterizing vertex PST.
A fundamental result due to Godsil \cite{god2} establishes that for any integer $k>0,$ there is a finite number of connected graphs of maximum degree $k$ with vertex PST. This result has prompted the investigation of more general notions of state transfer beyond vertex states, 
such as pair (plus) PST \cite{che1,  jia, ojha1, pal10}, $s$-pair PST \cite{kim}, PST between real pure states \cite{god8, god7,pal11}.  

A relaxation to vertex PST known as \emph{pretty good state transfer (PGST)} was introduced in \cite{god1, vin}. Extension of this notion to more general states, including pair states and real pure states, have also been investigated \cite{god8, pal11, ojha1, pal10}. A graph  $G$ is said to exhibit \emph{PGST} between two linearly independent real pure states $\u$ and $\v$ if there exists a sequence of real number $t_k\in\mathbb{R},$ and a complex number $\gamma$ of unit modulus such that 
\begin{equation}\label{5eq2}
       \lim_{k \to \infty}U_{M(G)}\ob{t_k}\u=\gamma \v.
   \end{equation}
   As in vertex PST, \cite{god7} shows that plus PST with respect to the adjacency, Laplacian, and signless Laplacian matrices is a rare phenomenon. This motivates us to investigate plus PGST in graphs. In particular, we study plus PGST in cycles and their complements.


\emph{Fractional revival (FR)} is another generalization of PST, which is relevant for quantum entanglement generation. FR occurs whenever a continuous-time quantum walk maps
the characteristic vector of a vertex to a superposition of the characteristic vectors of a
subset of vertices containing the initial vertex.
 In a path $P_n$ on $n$ vertices, FR occurs between two vertices with respect to the adjacency matrix if and only if $n\in\{2,3,4\},$ while among cycles only $C_4$ and $C_6$ admit FR \cite{chan2}. Furthermore, no tree admits Laplacian FR except for the paths on two
and three vertices \cite{chan3}.
In this work, we consider the initial state to be a real pure state. Let 
$\u$ and $\v$ be two linearly independent states in a graph $G.$ The graph $G$ is said to exhibit \emph{FR} from $\u$ to $\v$
 at time $\tau$ if there exist complex scalars $\alpha,\beta$ with $\beta\neq 0$
such that
\begin{equation}\label{5e1}
U_{M(G)}(\tau)\u=\alpha\u+\beta\v.
\end{equation}
In this case, we also say that $(\alpha,\beta)$-FR occurs from $\u$ to $\v$ at $\tau.$ 
In particular, if $\u$ and $\v$ both represent pair (plus) states, then the graph is said to have \textit{pair (plus) FR}. If $\u$ and $\v$ in \eqref{5e1} are orthogonal states, then $|\alpha|^2+|\beta|^2=1.$
If $\alpha=0$ in \eqref{5e1}, then $G$ admits PST between $\u$ and $\v.$  A non-trivial relation between FR from a vertex state and a pair state is established in \cite{pal10}.
A relaxation to FR called as \textit{pretty good fractional revival (PGFR)} between vertex states was introduced by Chan et al. in \cite{cha3}. 
  We consider PGFR from a real pure state $\u$ to $\v,$ where $\u$ and $\v$ are linearly independent.
   A graph $G$ is said to exhibit PGFR from $\u$ to $\v$ if there is a sequence of time $t_k\in\mathbb{R}$ such that for some $\alpha,\beta\in\mathbb{C}$ with $\beta\neq 0,$
   \[
       \lim_{k \to \infty}U_{M(G)}\ob{t_k}\u=\alpha\u+\beta\v.
  \]


\begin{rem}\label{5r1}
    If $G$ is a regular graph, then the transition matrices governed by the adjacency matrix, the Laplacian matrix, or the signless Laplacian matrix differ only by a global phase. Consequently, the state transfer
properties are identical under all such choices of $M(G).$ 
\end{rem}

Throughout the paper, $\mathbf{1}$ denotes the all-ones vector, $I$ the identity matrix, $J$ the all-ones square matrix, and $\mathbf{0}$ the zero matrix of appropriate order. 
The article is organized as follows. In Section \ref{5s2}, we include the effect of certain graph automorphisms on the existence of PGST from plus and vertex states. In Section \ref{5sec2}, we study FR from a real pure state in the complement of a graph with respect to the adjacency, Laplacian, and signless Laplacian matrices. Section \ref{5sec3} establishes a relation between FR in a graph and its double cover. In Section \ref{5sec5}, we provide a complete characterization of plus PGST in cycles and their complements. A few observations on vertex PGST in certain weighted paths with potential is presented in Section \ref{5sec6}. 

\section{Algebraic properties}\label{5s2}
An \emph{automorphism} $f$ of a graph $G$ is a bijection on the vertex set $V(G)$ such that vertices 
$a$ and $b$ are adjacent in $G$ if and only if  $f(a)$ and $f(b)$ are adjacent in $G.$
 If $P$ is the permutation matrix corresponding to the automorphism $f,$ then $P$ commutes with $M(G).$  Since the transition matrix $U_{M(G)}(t)$ is a polynomoal in $M(G),$ the matrix $P$ commutes with $U_{M(G)}(t)$ as well. The automorphism $f$ is said to \emph{fix} a vertex $a$ in $G$ if $P\e_a=\e_a.$ 
 In the following, we observe that the existence of certain automorphisms in a graph $G$ with plus PGST guarantees the existence of pair PGST in $G.$
\begin{thm}\label{5l7}
 Let a graph $G$ admit pretty good plus state transfer between $\frac{1}{\sqrt{2}}\ob{\e_a+\e_b}$ and $\frac{1}{\sqrt{2}}\ob{\e_c+\e_d}.$ Then the graph $G$ admits pretty good pair state transfer if there exists an automorphism of $G$ with  permutation matrix 
 $P$ satisfying one of the following conditions.
 \begin{enumerate}
    \item $P\e_a=\e_a$ and $P\e_b\neq \e_b,$ 
    \item  $P\e_a=\e_b$ and $P\e_b\neq \e_a.$ 
\end{enumerate}
 \end{thm}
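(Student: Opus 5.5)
The plan is to exploit that $P$ commutes with $U(t)$, apply $P$ to the plus-state limit, and subtract the two limits, so that what remains is a difference of two vertex states, i.e.\ a pair state. Write $\u=\frac{1}{\sqrt2}(\e_a+\e_b)$ and $\v=\frac{1}{\sqrt2}(\e_c+\e_d)$, and suppose $U(t_k)\u\to\gamma\v$. Since $PU(t)=U(t)P$, applying $P$ gives $U(t_k)P\u\to\gamma P\v$. Subtracting,
\[
U(t_k)\,(\u-P\u)\;\longrightarrow\;\gamma\,(\v-P\v).
\]
Write $P\e_c=\e_{c'}$ and $P\e_d=\e_{d'}$.

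\textbf{Step 1: the left-hand side is a multiple of a pair state.}
\begin{enumerate}
\item In condition 1, $P\e_b=\e_{b'}$ with $b'\neq b$. Then $\u-P\u=\frac{1}{\sqrt2}(\e_b-\e_{b'})$, which is a genuine pair state.
\item In condition 2, $P\e_b=\e_{b'}$ with $b'\neq a$, and also $b'\neq b$ because $P$ is a bijection with $P\e_a=\e_b$. Then $\u-P\u=\frac{1}{\sqrt2}(\e_a-\e_{b'})$, again a pair state.
\end{enumerate}
In both cases $\u-P\u$ is a unit vector of the form $\frac{1}{\sqrt2}(\e_x-\e_y)$ with $x\neq y$.

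\textbf{Step 2: the right-hand side is a pair state.} Because $U(t_k)$ is unitary, the limit must also have norm $1$, so
\[
\|\e_c+\e_d-\e_{c'}-\e_{d'}\|^2=2 .
\]
I will enumerate the coincidence patterns among $c,d,c',d'$, using $c\neq d$ and $c'\neq d'$.
\begin{itemize}
\item If $\{c,d\}=\{c',d'\}$, the norm is $0$.
\item If the two sets are disjoint, the norm is $4$.
\item The only possibility giving norm $2$ is that exactly one element is shared and the other two cancel.
\end{itemize}
Hence $\v-P\v=\frac{1}{\sqrt2}(\e_z-\e_w)$ for some $z\neq w$. We conclude $U(t_k)\frac{1}{\sqrt2}(\e_x-\e_y)\to\gamma\frac{1}{\sqrt2}(\e_z-\e_w)$, which is pair PGST provided the two pair states are linearly independent.

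\textbf{Step 3 (expected main obstacle): linear independence.} I must rule out $\{z,w\}=\{x,y\}$, where the limit would be only $\pm$ the initial pair state. My first attempt is to also use the sum: $U(t_k)(\u+P\u)\to\gamma(\v+P\v)$. In the degenerate case the pair state $\e_x-\e_y$ is sent (in the limit) to a multiple of itself. Then I would decompose $\u$ into its component along $\e_x-\e_y$ and its orthogonal complement. The aim is to show that $U(t_k)\u$ must then approach a vector whose overlap with $\u$ is forced to have modulus $1$, contradicting the assumed linear independence of $\u$ and $\v$.

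If this does not close directly, I would use a fallback. Replace $P$ by the powers $P^m$, which also satisfy condition 1 or condition 2 in a suitable form (e.g.\ $P^m$ still fixes $a$ but moves $b$). Choose $m$ so that the pair produced in Step 2 differs from the one in Step 1.
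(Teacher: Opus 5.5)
Your Steps 1 and 2 are exactly the paper's proof. You apply $P$, subtract, and use unitarity to force $\frac{1}{\sqrt2}(\e_c+\e_d-P\e_c-P\e_d)$ to be a pair state; the paper stops there and declares pair PGST. You are right that this is not enough, since PGST requires the two pair states to be linearly independent. The genuine gap in your proposal is that Step 3 does not close this, and neither remedy can work, because the degenerate configuration really occurs.

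Take the adjacency matrix of $P_3$ with vertices $1,2,3$, centre $2$, and $\tau=\pi/\sqrt{2}$. Then $U(\tau)\e_2=-\e_2$ and $U(\tau)\e_1=-\e_3$, so there is plus PST from $\frac{1}{\sqrt2}(\e_2+\e_1)$ to $\frac{1}{\sqrt2}(\e_2+\e_3)$. The swap $1\leftrightarrow 3$ satisfies condition 1 with $a=2$, $b=1$. The subtraction gives $U(\tau)(\e_1-\e_3)=\e_1-\e_3$, since $\e_1-\e_3$ is a $0$-eigenvector, i.e.\ a fixed state. So there is no contradiction to extract: the overlap of the initial plus state with its image has modulus $\frac12$, not $1$. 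This swap is the only non-trivial automorphism and it is an involution, so passing to $P^m$ produces nothing new.

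The same happens in $C_4$ at time $\frac{\pi}{2}$, where $\frac{1}{\sqrt2}(\e_0+\e_1)\mapsto-\frac{1}{\sqrt2}(\e_2+\e_3)$. Every automorphism satisfying condition 1 or 2 for this pair (with $a,b$ in either order) yields $\e_1-\e_3$ or $\e_0-\e_2$. Both are eigenvectors of $C_4$.

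What is missing is a separate argument for the degenerate configuration that finds pair PGST from a \emph{different} pair state, or an extra hypothesis that excludes this configuration. In both examples the conclusion of the theorem holds only because some other pair state works. In $P_3$ one has $U(\tau)(\e_2-\e_1)=-(\e_2-\e_3)$, and in $C_4$ one has $U(\frac{\pi}{2})(\e_0-\e_1)=-(\e_2-\e_3)$; neither comes out of the subtraction trick. As it stands, your argument, like the paper's (which does not raise the independence issue at all), proves only this: some pair state is mapped, in the limit, to a unimodular multiple of a pair state, possibly itself.
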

 \begin{proof}
  Suppose the graph $G$ admits plus PGST between $\frac{1}{\sqrt{2}}\ob{\e_a+\e_b}$ and $\frac{1}{\sqrt{2}}\ob{\e_c+\e_d}.$ Then by \eqref{5eq2}, we have 
  \begin{equation}\label{5eqn7}
         \lim_{k \to \infty}U_{M(G)}\ob{t_k}\ob{\e_a+\e_b}=\gamma\ob{\e_c+\e_d}.
     \end{equation}
    Suppose condition-1 holds. Then
     multiplying $P$ on both sides of \eqref{5eqn7} and subtracting the resulting equation from \eqref{5eqn7}, yields
 \begin{equation}\label{5eqn8}
        \lim_{k \to \infty}U_{M(G)}\ob{t_k}\ob{\e_b-P\e_b}=\gamma\ob{\e_c+\e_d-P\e_c-P\e_d}.
   \end{equation}
    Since $U_{M(G)}(t)$ is unitary, \eqref{5eqn8} implies that $\left\|\frac{1}{\sqrt{2}}\ob{\e_b-P\e_b}\right\|=\left\|\frac{1}{\sqrt{2}}\ob{\e_c+\e_d-P\e_c-P\e_d}\right\|.$ This equality is possible only if $\frac{1}{\sqrt{2}}\ob{\e_c+\e_d-P\e_c-P\e_d}$ represents a pair state. Hence, $G$ admits pair PGST.  
    Similarly, if condition-2 holds, an analogous argument shows that  
    $G$ admits pair PGST.
 \end{proof}
 Next, we observe that certain graph automorphisms prevent the existence of PGST between a vertex state and a plus state.
 \begin{thm}\label{5th1}
 Let $a, b$ and $c$ be vertices of a graph $G.$ If there exists an automorphism of $G$ that fixes $b$ but not $a,$ then there is no pretty good state transfer between 
$\e_a$ and $\frac{1}{\sqrt{2}}\ob{\e_b+\e_c}.$ 
\end{thm}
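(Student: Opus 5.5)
The plan is a norm/inner-product argument in the spirit of the proof of Theorem \ref{5l7}. Two features of $U_{M(G)}(t)$ do all the work: it commutes with the permutation matrix $P$ of the automorphism, and it preserves inner products.

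First I reduce to one direction of transfer. PGST is symmetric. If $\lim_{k\to\infty}U(t_k)\u=\gamma\v$, then applying the unitary $U(-t_k)=U(t_k)^{*}$ gives
\[\u-U(-t_k)\gamma\v=U(-t_k)\ob{U(t_k)\u-\gamma\v}\to 0,\]
so $\lim_{k\to\infty}U(-t_k)\v=\bar\gamma\u$. It therefore suffices to rule out
\[\lim_{k \to \infty}U(t_k)\e_a=\frac{\gamma}{\sqrt2}\ob{\e_b+\e_c}\]
for some sequence $t_k$ and some unimodular $\gamma$. We may assume $b\neq c$, since the plus state involves two distinct vertices.

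Suppose such a limit exists. Multiplying by $P$ and using $PU(t)=U(t)P$ together with $P\e_b=\e_b$ gives
\[\lim_{k\to\infty}U(t_k)P\e_a=\frac{\gamma}{\sqrt2}\ob{\e_b+P\e_c}.\]
Now compare inner products. Since $U(t_k)$ is unitary, $\langle U(t_k)\e_a,U(t_k)P\e_a\rangle=\langle \e_a,P\e_a\rangle$. Because $P$ does not fix $a$, the vector $P\e_a$ is a different standard basis vector, so this inner product is $0$ for every $k$. By continuity of the inner product, the limit vectors must also be orthogonal:
\[\frac{|\gamma|^2}{2}\langle \e_b+\e_c,\ \e_b+P\e_c\rangle=0.\]

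The remaining step, which is the only place needing care, is to show this inner product is nonzero. Expanding, it equals
\[1+\langle \e_b,P\e_c\rangle+\langle \e_c,\e_b\rangle+\langle \e_c,P\e_c\rangle.\]
Here $\langle \e_c,\e_b\rangle=0$ since $b\neq c$. Also $\langle \e_b,P\e_c\rangle=0$: since $P$ is a bijection fixing $b$ and $c\neq b$, we have $P\e_c\neq\e_b$. Every term is a nonnegative $0/1$ value and the leading term is $1$, so the inner product is at least $1$. This contradicts orthogonality of the limits, so no PGST exists between $\e_a$ and $\frac{1}{\sqrt2}\ob{\e_b+\e_c}$ in either direction.
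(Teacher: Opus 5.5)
Your proof is correct and is essentially the paper's argument. The paper also applies $P$ to the limit, but then subtracts to get $\lim_{k\to\infty}U(t_k)(\e_a-P\e_a)=\frac{\gamma}{\sqrt2}(\e_c-P\e_c)$ and compares norms ($\sqrt2$ versus at most $1$), which is the same computation as your inner-product comparison written via $\|x-y\|^2=\|x\|^2+\|y\|^2-2\,\mathrm{Re}\langle x,y\rangle$; your explicit reduction to one direction of transfer is a small extra that the paper leaves implicit.
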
 
\begin{proof}
    Suppose $G$ admits PGST between $\e_a$ and $\frac{1}{\sqrt{2}}\ob{\e_b+\e_c}.$  Then by \eqref{5eq2}, we have
    \begin{equation}\label{5e29}
        \lim_{k \to \infty}U_{M(G)}\ob{t_k}\e_a=\frac{\gamma}{\sqrt{2}}\ob{\e_b+\e_c}.
    \end{equation}
     Let $P$ be the permutation matrix of the automorphism of $G$ that fixes $b$ but not $a.$  Multiplying $P$ on both sides of \eqref{5e29} and subtracting the resulting equation from \eqref{5e29}, gives
 \[
        \lim_{k \to \infty}U_{M(G)}\ob{t_k}(\e_a-P\e_a)=\frac{\gamma}{\sqrt{2}}\ob{\e_c-P\e_c},
    \]
 which is a contradiction as $\left\|\e_a-P\e_a\right\|\neq \left\|\frac{1}{\sqrt{2}}\ob{\e_c-P\e_c}\right\|.$ 
Hence, the result follows.
\end{proof}

\section{State transfer in graph complement}\label{5sec2}
The \emph{complement} of a graph $G,$ denoted by $\overline{G},$ is the graph on the vertex set of $G,$ where two distinct vertices are adjacent in $\overline {G}$ if and only if they are not adjacent in $G.$ 
The matrix $M(\overline{G})$ corresponding to the adjacency, Laplacian, and signless Laplacian matrices of the complement $\overline{G}$ on $n$ vertices is given by
\[M(\overline{G})=\delta J+\zeta I-M(G),\]
   where $\delta=\left\{ \begin{array}{rcl}
 -1, & \mbox{if} & M =L \\ 
 1, & \mbox{if} & M \in\{A,Q\},
 \end{array}\right.$
\quad and \quad
$\zeta=\left\{ \begin{array}{rcl}
 -1, & \mbox{if} & M=A, \\ 
n, & \mbox{if} & M=L, \\
n-2, & \mbox{if} & M=Q. 
 \end{array}\right.$
\\The preservation of vertex PST under graph complementation was established in \cite[Lemma 15.3]{god1} and \cite[Theorem 2]{alv} for the adjacency matrix of regular graphs and for the Laplacian matrix, respectively. Moreover, the preservation of Laplacian FR between two vertices under graph complementation was shown in \cite[Theorem 9]{chan3}. The observations extend to FR from real pure states as well. We include the proof for convenience.
 \begin{thm}\label{5t15}
 Let $\mathbf{1}$ be an eigenvector of a graph $G$ on $n$ vertices. If $G$ exhibits fractional revival from a state $\u$ to $\v$ at time $\tau$ with respect to $M(G)$ and $n\tau\in 2\pi\mathbb{Z},$ then the complement $\overline G$ exhibits fractional revival from $\u$ to $\v$ relative to $M(\overline{G}).$ 
 \end{thm}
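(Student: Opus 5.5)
The plan is to express the transition matrix of $\overline{G}$ in terms of that of $G$ and then use the condition $n\tau\in 2\pi\mathbb{Z}$ to remove the contribution of $J$. We have $M(\overline{G})=\delta J+\zeta I-M(G)$. Since $\mathbf{1}$ is an eigenvector of $M(G)$, say $M(G)\mathbf{1}=\mu\mathbf{1}$, and $M(G)$ is symmetric, we get $M(G)J=\mu J=JM(G)$. So the three summands $\delta J$, $\zeta I$ and $-M(G)$ pairwise commute, and the exponential factors:
\[
U_{M(\overline{G})}(t)=e^{it\zeta}\,\exp(it\delta J)\,U_{M(G)}(-t).
\]

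Next, compute $\exp(it\delta J)$ from $J^2=nJ$:
\[
\exp(it\delta J)=I+\frac{e^{it\delta n}-1}{n}J .
\]
At $t=\tau$ with $n\tau\in 2\pi\mathbb{Z}$ we have $e^{i\tau\delta n}=1$, since $\delta=\pm1$. Hence $\exp(i\tau\delta J)=I$, and therefore
\[
U_{M(\overline{G})}(\tau)=e^{i\tau\zeta}\,U_{M(G)}(-\tau).
\]

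It remains to relate $U_{M(G)}(-\tau)$ to $U_{M(G)}(\tau)$ on the real state $\u$. Because $M(G)$ is a real matrix, $U_{M(G)}(-\tau)=\overline{U_{M(G)}(\tau)}$ (entrywise complex conjugate). Since $\u$ and $\v$ are real vectors, conjugating $U_{M(G)}(\tau)\u=\alpha\u+\beta\v$ gives $U_{M(G)}(-\tau)\u=\bar\alpha\u+\bar\beta\v$. Combining this with the previous display yields
\[
U_{M(\overline{G})}(\tau)\u=e^{i\tau\zeta}\bar\alpha\,\u+e^{i\tau\zeta}\bar\beta\,\v .
\]
The coefficient $e^{i\tau\zeta}\bar\beta$ is nonzero because $\beta\neq0$, so $\overline{G}$ has $(e^{i\tau\zeta}\bar\alpha,\,e^{i\tau\zeta}\bar\beta)$-FR from $\u$ to $\v$ at time $\tau$.

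The only delicate step is the commutation of $J$ with $M(G)$, which is exactly what the eigenvector hypothesis on $\mathbf{1}$ provides. For $M=L$ this hypothesis holds automatically, since $L\mathbf{1}=0$. The remaining steps are routine.
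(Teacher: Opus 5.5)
Your proof is correct and follows the paper's argument: use the eigenvector hypothesis to commute $J$ with $M(G)$, factor $U_{M(\overline{G})}(t)=e^{it\zeta}\exp(it\delta J)U_{M(G)}(-t)$, and use $n\tau\in 2\pi\mathbb{Z}$ to make $\exp(i\tau\delta J)=I$. The only cosmetic difference is the last step: the paper reads off fractional revival in $\overline{G}$ at time $-\tau$ via $U_{M(\overline{G})}(-\tau)\u=e^{-i\tau\zeta}U_{M(G)}(\tau)\u$, whereas you stay at time $\tau$ and conjugate (valid because $M(G)$, $\u$, $\v$ are real), which gives the coefficients $(e^{i\tau\zeta}\bar\alpha,\, e^{i\tau\zeta}\bar\beta)$.
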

\begin{proof}
   Since $\mathbf{1}$ is an eigenvector of $G$ and $J=\mathbf{1}\mathbf{1}^T,$ it follows that $M(G)$ commutes with $J.$ Therefore, for all $t\in\mathbb{R},$
   \[U_{M(\overline{G})}(t)=\exp{(itM(\overline{G}))=\exp{(i t\zeta)}\exp{(it\delta J)}}\exp{(-itM(G))}.\]
    The spectral decomposition of $J$ gives
   $\exp{(it\delta J)}=[\exp{(itn\delta)}-1]\frac{1}{n}J+I.$ 
   If $n\tau\in 2\pi \mathbb{Z},$ then we obtain $U_{M(\overline{G})}(-\tau)\u=\exp{(-i\tau\zeta)} U_{M(G)}(\tau)\u,$ and the result follows.
\end{proof}
 It is well known that the path $P_2$ exhibits PST between the end vertices at $\frac{\pi}{2}.$ Using Theorem \ref{5t15}, one may observe that the complete graph on $4n$ vertices with a single missing edge admits Laplacian vertex PST between the non-adjacent vertices. Furthermore, the result in \cite[Corollary 1]{pal8} can be recovered as a direct consequence of Theorem \ref{5t15}.

The \emph{join} of two graphs $G$ and $H$ is defined by $G+H:=\overline{\overline{G}\cup\overline{H}},$ where $\overline{G}\cup\overline{H}$ is the disjoint union of $\overline{G}$ and $\overline{H}.$ An analogous observation to \cite[Corollary 10]{chan3} is obtained for real pure states using Theorem \ref{5t15}. 
 \begin{cor}\label{5c3}
 Let the complement of a graph $G$ exhibit fractional revival from $\u$ to $\v$ at time $\tau$ relative to the Laplacian matrix. For any graph $H,$ if $\tau(|V(G)|+|V(H)|)\in 2\pi\mathbb{Z},$
 then the join $G + H$ has fractional revival from $\u$ to $\v$ relative to the Laplacian matrix.
 \end{cor}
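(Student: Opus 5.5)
Apply Theorem \ref{5t15} twice: once to pass from $\overline{G}$ to the disjoint union $\overline{G}\cup\overline{H}$, and once to take the complement of that union.

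\emph{Step 1: from $\overline{G}$ to the union.} Write $m=|V(G)|$ and $N=|V(G)|+|V(H)|$. The states $\u,\v$ are supported on $V(G)$, so we view them, padded with zeros, as states of the disjoint union $\overline{G}\cup\overline{H}$. Relative to the Laplacian, $L(\overline{G}\cup\overline{H})=L(\overline{G})\oplus L(\overline{H})$. Hence
\[U_{L(\overline{G}\cup\overline{H})}(t)=U_{L(\overline{G})}(t)\oplus U_{L(\overline{H})}(t).\]
The block $U_{L(\overline{H})}(t)$ acts only on coordinates where $\u$ and $\v$ vanish. Therefore
\[U_{L(\overline{G}\cup\overline{H})}(\tau)\u=U_{L(\overline{G})}(\tau)\u=\alpha\u+\beta\v,\]
so $\overline{G}\cup\overline{H}$ exhibits fractional revival from $\u$ to $\v$ at time $\tau$ with the same $(\alpha,\beta)$.

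\emph{Step 2: take the complement.} The Laplacian of any graph has $\mathbf{1}$ as an eigenvector with eigenvalue $0$. This holds for a disconnected graph as well, since every row sum is zero. So $\mathbf{1}\in\mathbb{R}^N$ is an eigenvector of $L(\overline{G}\cup\overline{H})$. By hypothesis $N\tau\in 2\pi\mathbb{Z}$, so Theorem \ref{5t15} applies to $\overline{G}\cup\overline{H}$ on $N$ vertices. It gives fractional revival from $\u$ to $\v$ in $\overline{\overline{G}\cup\overline{H}}=G+H$ relative to the Laplacian.

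Tracking the proof of Theorem \ref{5t15}, the revival in $G+H$ occurs at time $-\tau$, with coefficients multiplied by the phase $\exp(i\tau N)$. Since $N\tau\in2\pi\mathbb{Z}$, this phase equals $1$, and in any case the definition of fractional revival does not constrain the phase. Fractional revival at some time is all the statement asks for.

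\emph{Main point of care.} The main thing to check is that Theorem \ref{5t15} does not need connectivity or regularity, only that $\mathbf{1}$ be an eigenvector. For the Laplacian this is automatic, which is why the corollary is stated for $L$ and not for $A$ or $Q$. The other point is that the vertex count entering the condition $n\tau\in2\pi\mathbb{Z}$ is $N=|V(G)|+|V(H)|$, not $|V(G)|$; this is exactly the hypothesis $\tau(|V(G)|+|V(H)|)\in2\pi\mathbb{Z}$.
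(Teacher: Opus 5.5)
Your proof is correct and follows the paper's intended route. The paper gives no separate proof: it obtains the corollary from Theorem \ref{5t15} applied to $\overline{G}\cup\overline{H}$, relying on the block-diagonal Laplacian of the union and the fact that $\mathbf{1}$ is always a Laplacian eigenvector, just as it does explicitly for Corollary \ref{5co2}. One nit: the phase from the proof of Theorem \ref{5t15} is $\exp(-i\tau N)$, not $\exp(i\tau N)$, which is immaterial since both equal $1$ when $N\tau\in 2\pi\mathbb{Z}$.
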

 Although the path $P_3$ does not admit Laplacian plus PST \cite[Corollary 7.8]{god7}, we use it to find infinitely many graphs with Laplacian plus PST. 
\begin{exm}\label{5ex1}
Let $P_3$ be a path on three vertices $1,2$ and $3,$ where both $1$ and $3$ are adjacent to $2.$ Since, the path $P_2$ admits Laplacian PST between the end vertices at $\frac{\pi}{2},$ the complement $\overline{P_3}$ admits PST between $\frac{1}{\sqrt{2}}\ob{\e_1+\e_2}$ and $\frac{1}{\sqrt{2}}\ob{\e_3+\e_2}$  at $\frac{\pi}{2}.$ Therefore, by Corollary \ref{5c3},
the graph $P_3+H$ admits Laplacian plus PST at $\frac{\pi}{2}$ between $\frac{1}{\sqrt{2}}\ob{\e_1+\e_2}$ and $\frac{1}{\sqrt{2}}\ob{\e_3+\e_2},$ whenever $|V(H)|\equiv 1 \pmod 4.$ 
\end{exm}
    


\begin{rem}\label{5remark2}
 The observation in Theorem \ref{5t15} can be extended to PGFR. More precisely, if a graph $G$ on $n$ vertices exhibits PGFR from a state $\u$ to $\v$ with respect to the sequence $t_k$ relative to the Laplacian matrix $L(G),$ and $nt_k\in 2\pi\mathbb{Z},$ then the the complement $\overline G$ also exhibits PGFR from $\u$ to $\v$ relative to $L(\overline{G}).$ 
 \end{rem}
For regular graphs, \cite[Lemma 3]{pal11} shows that state transfer is preserved under graph complementation for any real pure state $\u$ with $\mathbf{1}^T\u=0.$ 
 We provide a sufficient condition that extends this observation to non-regular graphs,
which does not require any additional condition on the time parameter. 
\begin{thm}\label{5th2}
Let $G$ be a graph on $n$ vertices. For a vector $\u\in\mathbb{R}^n,$
  if $\mathbf{1}$ is orthogonal to the set of vectors $\cb{\u,M(G)\u,\ldots, M(G)^{n-1}\u},$ then
   \[U_{M(\overline{G})}(t)\u=\exp{(it\zeta)} U_{M(G)}(-t)\u,\]
 where $U_{M(G)}(t)$ and $U_{M(\overline{G})}(t)$ are the transition matrices of $G$ and $\overline{G}$, respectively, and  the constant $\zeta$ is as defined earlier for $M\in\{A,L,Q\}.$
 \end{thm}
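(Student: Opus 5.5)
The idea is to work inside the Krylov space generated by $\u$ under $M(G)$. On that space $J$ acts as zero, so $M(\overline{G})=\delta J+\zeta I-M(G)$ acts there exactly as $\zeta I-M(G)$. Write $M=M(G)$ and let $W=\mathrm{span}\cb{\u,M\u,\ldots,M^{n-1}\u}$.

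First, $W$ is $M$-invariant. By the Cayley--Hamilton theorem, $M^n$ is a linear combination of $I,M,\ldots,M^{n-1}$, so $M^n\u\in W$, and by induction $M^k\u\in W$ for every $k\geq 0$. The hypothesis says $\mathbf{1}^T M^j\u=0$ for $0\le j\le n-1$, hence $\mathbf{1}^T\mathbf{w}=0$ for all $\mathbf{w}\in W$. Since $J=\mathbf{1}\mathbf{1}^T$, this gives $J\mathbf{w}=0$ for every $\mathbf{w}\in W$.

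Next, I show by induction on $k$ that
\[
M(\overline{G})^k\u=(\zeta I-M)^k\u \quad\text{for all } k\geq 0 .
\]
The case $k=0$ is trivial. For the step, suppose $M(\overline{G})^k\u=(\zeta I-M)^k\u$. This vector is a polynomial in $M$ applied to $\u$, so it lies in $W$. Applying $M(\overline{G})=\delta J+\zeta I-M$ to it, the $\delta J$ term vanishes because $J$ kills $W$, which leaves $(\zeta I-M)^{k+1}\u$.

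Then I sum the exponential series termwise (it converges absolutely) to get $U_{M(\overline{G})}(t)\u=\exp(it(\zeta I-M))\u$. Since $\zeta I$ commutes with $M$, this factors as $\exp(it\zeta)\exp(-itM)\u=\exp(it\zeta)U_{M(G)}(-t)\u$, which is the claim.

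The only real subtlety is making sure $M^k\u$ stays orthogonal to $\mathbf{1}$ for $k\ge n$, which the Cayley--Hamilton step handles. I do not expect any serious obstacle. In particular, no commutation between $M$ and $J$ on the whole space is needed, which is why the argument works for non-regular graphs and imposes no condition on $t$.
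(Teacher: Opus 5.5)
Your proposal is correct and follows essentially the same route as the paper: Cayley--Hamilton gives $JM(G)^k\u=\mathbf{0}$ for all $k\geq 0$, hence $M(\overline{G})^k\u=(\zeta I-M(G))^k\u$, and summing the exponential series gives the identity. Your induction on $k$ spells out the step the paper leaves implicit when it drops every term containing $J$ from the expansion of $(\delta J+\zeta I-M(G))^k\u$.
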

 \begin{proof}
 Since $G$ is a graph of order $n,$ the matrix $M(G)^k$ can be expressed as a linear combination of $I, M(G),\ldots, M(G)^{n-1},$ for all non-negative integers $k.$ As $\mathbf{1}$ is orthogonal to the subspace generated by $\cb{\u,M(G)\u,\ldots, M(G)^{n-1}\u},$ it follows that $JM(G)^k\u=\mathbf{0}.$ Now the result follows from $M(\overline{G})^k\u=(\delta J+\zeta I-M(G))^k\u=(\zeta I-M(G))^k\u.$ 
 \end{proof}
 \begin{exm}\label{5ex2}
     The path $P_5$ exhibits pair PST between $\u=\frac{1}{\sqrt{2}}(\e_1-\e_5)$ and $\v=\frac{1}{\sqrt{2}}(\e_2-\e_4)$ at $\frac{\pi}{2}$ with respect to the adjacency matrix \cite{pal10}. Note that $A(P_5)\u=\v$ and  $A(P_5)^2\u=\u.$ Then by Theorem \ref{5th2}, the graph $\overline{P_5}$ admits PST between $\frac{1}{\sqrt{2}}(\e_1-\e_5)$ and $\frac{1}{\sqrt{2}}(\e_2-\e_4).$
 \end{exm}
As a consequence of Theorem \ref{5th2}, we observe that FR is preserved under graph join.
 \begin{cor}\label{5co2}
     Suppose the premise of Theorem \ref{5th2} holds. If the graph $G$ exhibits fractional revival from $\u$ to $\v$ at $\tau,$ with respect to $M(G),$ then for any graph $H,$ the join $G+H$ admits fractional revival from $[\u^T\quad\mathbf{0}]^T$ to $[\v^T\quad\mathbf{0}]^T$ relative to $M(G+H).$   
 \end{cor}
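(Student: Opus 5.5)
The plan is to compute the action of $M(G+H)$ directly on vectors of the form $[\mathbf{w}^T\quad\mathbf{0}]^T$, where $\mathbf{w}$ lies in the Krylov space
\[
\mathcal{K}=\operatorname{span}\cb{\u,M(G)\u,\ldots,M(G)^{n-1}\u}.
\]
Theorem \ref{5th2} can serve as a guide, but the join calculation itself is short. Let $n=|V(G)|$ and $m=|V(H)|$. By the premise of Theorem \ref{5th2}, $\mathbf{1}\perp\mathcal{K}$. By Cayley--Hamilton, $\mathcal{K}$ is $M(G)$-invariant: $M(G)^n\u$ is a combination of lower powers of $M(G)$ applied to $\u$. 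Hence $J\mathbf{w}=\mathbf{0}$ for every $\mathbf{w}\in\mathcal{K}$, and also for $M(G)\mathbf{w}$.

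Next I write the block forms of $M(G+H)$ with respect to $V(G)\cup V(H)$. Here $J$ denotes the appropriate rectangular all-ones blocks, and $D(H)$ is the degree matrix of $H$.
\[
A(G+H)=\begin{bmatrix} A(G) & J\\ J & A(H)\end{bmatrix},\quad
L(G+H)=\begin{bmatrix} L(G)+mI & -J\\ -J & L(H)+nI\end{bmatrix},\quad
Q(G+H)=\begin{bmatrix} Q(G)+mI & J\\ J & Q(H)+nI\end{bmatrix}.
\]
For $\mathbf{w}\in\mathcal{K}$, the off-diagonal block contributes $\pm J\mathbf{w}=\mathbf{0}$. This gives
\[
M(G+H)\begin{bmatrix}\mathbf{w}\\ \mathbf{0}\end{bmatrix}=\begin{bmatrix}(M(G)+cI)\mathbf{w}\\ \mathbf{0}\end{bmatrix},
\]
where $c=0$ for $M=A$ and $c=m$ for $M\in\{L,Q\}$. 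Therefore $\mathcal{K}\oplus\mathbf{0}$ is invariant under $M(G+H)$, and on it $M(G+H)$ acts as $M(G)+cI$.

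By induction on $k$, $M(G+H)^k[\u^T\quad\mathbf{0}]^T=[((M(G)+cI)^k\u)^T\quad\mathbf{0}]^T$. Summing the exponential series then gives
\[
U_{M(G+H)}(t)\begin{bmatrix}\u\\ \mathbf{0}\end{bmatrix}=\exp(itc)\begin{bmatrix}U_{M(G)}(t)\u\\ \mathbf{0}\end{bmatrix}\quad\text{for all } t\in\mathbb{R}.
\]
Evaluating at $t=\tau$ and using $U_{M(G)}(\tau)\u=\alpha\u+\beta\v$ yields $(e^{i\tau c}\alpha,\,e^{i\tau c}\beta)$-FR from $[\u^T\quad\mathbf{0}]^T$ to $[\v^T\quad\mathbf{0}]^T$. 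The coefficient $e^{i\tau c}\beta$ is nonzero, and linear independence together with unit norm carry over trivially from $\u,\v$ to the padded vectors.

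The only delicate point is confirming that the off-diagonal blocks annihilate every vector generated along the walk, not just $\u$ itself. This is exactly why the hypothesis involves the whole Krylov set rather than $\u$ alone, and why the $M(G)$-invariance of $\mathcal{K}$ is needed. Notably, unlike Corollary \ref{5c3}, no condition on $\tau$ is required, because the $J$-terms never act nontrivially on the relevant subspace.
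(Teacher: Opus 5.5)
Your proof is correct, but it takes a more direct route than the paper's. The paper never writes down $M(G+H)$. Instead it uses $G+H=\overline{\overline{G}\cup\overline{H}}$ and applies Theorem~\ref{5th2} twice. First it passes from FR in $G$ to FR in $\overline{G}$ (implicitly at time $-\tau$, up to a phase), noting from the proof of Theorem~\ref{5th2} that the Krylov space of $\overline{G}$ at $\mathbf{u}$ lies inside that of $G$, so the premise is inherited. Then it pads by zeros to get FR in the disjoint union $\overline{G}\cup\overline{H}$, checks the premise there, and complements once more to reach $G+H$.

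You instead write out the block forms of $A$, $L$, $Q$ for the join. You observe that the off-diagonal blocks $\pm J$ annihilate the $M(G)$-invariant space $\mathcal{K}$, so $M(G+H)$ acts as $M(G)+cI$ on $\mathcal{K}\oplus\mathbf{0}$.

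The paper's argument is shorter on the page because it reuses Theorem~\ref{5th2} as a black box, but it hides the time reversal $\tau\mapsto-\tau\mapsto\tau$ and the accompanying phases. Your argument is self-contained and needs neither the complement formula nor the disjoint-union step. It also gives the explicit identity $U_{M(G+H)}(t)[\mathbf{u}^T\ \mathbf{0}]^T=e^{itc}[(U_{M(G)}(t)\mathbf{u})^T\ \mathbf{0}]^T$ for all $t$, and hence the exact revival coefficients $(e^{i\tau c}\alpha,e^{i\tau c}\beta)$. Your value $c=m$ for $L,Q$ agrees with what the paper's double complementation produces, namely $\zeta_{n+m}-\zeta_n$. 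Because your identity holds for every $t$, it also transfers periodicity, PST, PGST and PGFR to the join with no extra work.
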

\begin{proof}
It follows from the proof of Theorem \ref{5th2} that the premise of Theorem \ref{5th2} holds for $\overline{G}.$  For any graph $H$ of order $m,$ observe that the same conditions also hold for the disjoint union $\overline{G}\cup \overline{H}$ with the vector $[\u^T\quad\mathbf{0}]^T\in \mathbb{R}^{m+n}.$   
    Since $G$ admits FR from $\u$ to $\v,$ Theorem \ref{5th2} applies to conclude that $\overline{G}\cup \overline{H}$ admits FR from $[\u^T\quad\mathbf{0}]^T$ to $[\v^T\quad\mathbf{0}]^T.$ Consequently, $G+H$ admits FR from $[\u^T\quad\mathbf{0}]^T$ to $[\v^T\quad\mathbf{0}]^T.$
\end{proof}

\begin{exm} In continuation of  Example \ref{5ex2}, one may apply Corollary \ref{5co2} to show that for any graph $H,$ both $P_5+H$ and $\overline{P_5}+H$ admit PST between $\frac{1}{\sqrt{2}}(\e_1-\e_5)$ and $\frac{1}{\sqrt{2}}(\e_2-\e_4).$  
\end{exm}

\section{Double covers}\label{5sec3}
Let $X_1$ and $X_2$ be graphs on the same vertex set. The graph $X_1\ltimes X_2$ is defined by the adjacency matrix \[\begin{bmatrix}
    A(X_1) & A(X_2)\\
    A(X_2) & A(X_1)
\end{bmatrix}.\] 
If $X_1$ and $X_2$ have disjoint edge sets, then $X_1\ltimes X_2$ is called a double cover of the graph with adjacency matrix $A(X_1)+A(X_2).$ 
Let $D(X_1)$ and $D(X_2)$ be the degree matrices of graphs $X_1$ and $X_2,$ respectively. Consider $D=D(X_1)+D(X_2).$ Then the matrix $M\in\{A,L,Q\}$ associated with the graph $X_1\ltimes X_2$ becomes
\[M(X_1\ltimes X_2)=\begin{bmatrix}
    \eta D+\delta A(X_1) & \delta A(X_2)\\
   \delta A(X_2) & \eta D+\delta A(X_1)
\end{bmatrix},\quad \text{where}\]
 \[\eta=\left\{ \begin{array}{rcl}
 0, & \mbox{if} & M=A, \\ 
 1, & \mbox{if} & M\in\{L,Q\},
 \end{array}\right.\quad 
\text{and} \quad \delta=\left\{ \begin{array}{rcl}
 -1, & \mbox{if} & M =L, \\ 
 1, & \mbox{if} & M \in\{A,Q\}. 
 \end{array}\right.\] 

 We extend \cite[Lemma 5.1]{cou} to obtain a unified expression for the transition matrix of $X_1\ltimes X_2$ with respect to the adjacency, Laplacian, and signless Laplacian matrices. 
 \begin{thm}\label{5t19}
    Let $X_1$ and $X_2$ be graphs on the same vertex set $V.$ 
    The transition matrix of  $X_1\ltimes X_2$ is given by 
 \[U_{M({X_1\ltimes X_2})}(t)=\frac{1}{2}\begin{bmatrix}
    U_{M(G_+)}(t)+ U_{M(G_-)}(t) & U_{M(G_+)}(t)- U_{M(G_-)}(t)\\
    U_{M(G_+)}(t)- U_{M(G_-)}(t)& U_{M(G_+)}(t)+ U_{M(G_-)}(t)
\end{bmatrix},\]
where $t\in \mathbb{R},$ and $M(G_{\pm})=\eta D+\delta(A(X_1)\pm A(X_2)).$
\end{thm}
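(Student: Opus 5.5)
The plan is to block-diagonalize $M(X_1\ltimes X_2)$ with the orthogonal matrix
\[
K=\frac{1}{\sqrt{2}}\begin{bmatrix} I & I\\ I & -I\end{bmatrix},
\]
which satisfies $K=K^T=K^{-1}$. Write $M(X_1\ltimes X_2)=\begin{bmatrix} B & C\\ C & B\end{bmatrix}$, where
\[
B=\eta D+\delta A(X_1),\qquad C=\delta A(X_2).
\]

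First we compute $K\begin{bmatrix} B & C\\ C & B\end{bmatrix}K$. Multiplying out the blocks gives
\[
K\begin{bmatrix} B & C\\ C & B\end{bmatrix}K=\begin{bmatrix} B+C & \mathbf{0}\\ \mathbf{0} & B-C\end{bmatrix}.
\]
By the definition in the statement, $B\pm C=\eta D+\delta\big(A(X_1)\pm A(X_2)\big)=M(G_{\pm})$. The point to verify here is that the diagonal term is $\eta D$ in both blocks, with $D=D(X_1)+D(X_2)$ the degree matrix of the whole cover. Consequently $M(G_-)$ is simply the signed matrix $\eta D+\delta(A(X_1)-A(X_2))$, which is how the theorem defines it, so there is no inconsistency.

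Next, since $K$ is orthogonal and $M(X_1\ltimes X_2)=K\,\mathrm{diag}(M(G_+),M(G_-))\,K$, every power satisfies
\[
M(X_1\ltimes X_2)^k=K\,\mathrm{diag}\big(M(G_+)^k,M(G_-)^k\big)\,K.
\]
Summing the exponential series term by term then yields
\[
U_{M(X_1\ltimes X_2)}(t)=K\,\mathrm{diag}\big(U_{M(G_+)}(t),U_{M(G_-)}(t)\big)\,K.
\]

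Finally, we carry out the product $K\,\mathrm{diag}(U_+,U_-)\,K$ with the factor $\tfrac12$ coming from the two copies of $\tfrac{1}{\sqrt2}$. The diagonal blocks come out as $\tfrac12(U_++U_-)$ and the off-diagonal blocks as $\tfrac12(U_+-U_-)$, which is exactly the claimed expression.

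There is no real obstacle in this argument. The only care needed is in the block bookkeeping in the first step, where one must check that the diagonal contribution is identical in both blocks, and in the sign conventions for $\eta$ and $\delta$ across the cases $M\in\{A,L,Q\}$. Both are routine, and this is essentially the argument of \cite[Lemma 5.1]{cou} with $\eta D$ carried along.
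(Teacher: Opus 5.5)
Your proof is correct and is essentially the paper's argument: your $K$ is exactly $H\otimes I_n$ with $H=\frac{1}{\sqrt{2}}\begin{bmatrix}1&1\\1&-1\end{bmatrix}$, which the paper uses to conjugate $M(X_1\ltimes X_2)=I_2\otimes(\eta D+\delta A(X_1))+A(K_2)\otimes \delta A(X_2)$ into $\mathrm{diag}(M(G_+),M(G_-))$. The only difference is that you check the block-diagonalization and the passage to the exponential directly, whereas the paper cites \cite[Lemma 3.1]{cou} for that step.
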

\begin{proof}
  Let $A(K_2)$ be the adjacency matrix of a complete graph on two vertices, then the matrix $M\in\{A,L,Q\}$ associated with the graph $X_1\ltimes X_2$ can be written as
\[M(X_1\ltimes X_2)=I_2\otimes(\eta D+\delta A(X_1))+A(K_2)\otimes  (\delta A(X_2)). \]  
Note that $I_2$ and $A(K_2)$ commute and can be simultaneously diagonalized by the matrix \[H=\frac{1}{\sqrt{2}}\begin{bmatrix}
    1 & 1\\
    1 & -1
\end{bmatrix}.\] 
Since $(H\otimes I_n)^{-1}=H\otimes I_n,$ it follows from \cite[Lemma 3.1]{cou} that
\[(H\otimes I_n)U_{M(X_1\ltimes X_2)}(t)(H\otimes I_n)=\begin{bmatrix}
    U_{M(G_+)}(t) & \mathbf{0}\\
     \mathbf{0} & U_{M(G_-)}(t)
    \end{bmatrix},\]
    where $M(G_{\pm})=\eta D+\delta(A(X_1)\pm A(X_2)),$ and the result follows. 
\end{proof}
A characterization of vertex PST in the graph $X_1\ltimes X_2$
 was established in \cite[Theorem 5.2]{cou}. Furthermore, Laplacian pair PST was studied in \cite[Theorem 4.2]{jia} when both $X_1$ and $X_2$ are regular. We extend these results to the existence of FR in $X_1\ltimes X_2$ with respect to $M\in\{A,L,Q\}.$ 
\begin{thm}\label{5t21}
    Let $X_1$ and $X_2$ be graphs on the same vertex set $V.$ Let $M(G_{\pm})=\eta D+\delta (A(X_1)\pm A(X_2))$ be the corresponding matrices of $G_+$ and $G_-,$ respectively. Then for the states $\u, \v\in \mathbb{R}^{|V|},$ and $\alpha,\beta\in\mathbb{C}$ with $\beta\neq 0,$ the following holds.
   \begin{enumerate}

\item $X_1\ltimes X_2$ admits $(\alpha,\beta)$-fractional revival from $\tb{\u^T \quad \mathbf{0}}^T$
 to $\tb{\mathbf{0} \quad 
\u^T}^T$ at $\tau$ if and only if $\u$ is periodic at $\tau$ with respect to $M(G_+)$ and $M(G_-)$ with phase factor $\alpha+\beta$ and $\alpha-\beta,$ respectively. 

\item $X_1\ltimes X_2$ admits $(\alpha,\beta)$-fractional revival at $\tau$ from state $\tb{\u^T \quad \mathbf{0}}
^T$ to  $\tb{\v^T \quad \mathbf{0}}
^T$ $\ob{\text{or, from} \tb{  \mathbf{0} \quad \u^T}
^T \text{to}   \tb{ 
\mathbf{0} \quad \v^T}^T}$ if and only if  $U_{M(G_\pm)}(\tau)\u=\alpha\u+\beta\v.$ 

\item $X_1\ltimes X_2$ admits $(\alpha,\beta)$-fractional revival at $\tau$ from state $\tb{\u^T \quad \mathbf{0}}
^T$ to $\tb{\mathbf{0} \quad 
\v^T}^T$ $\ob{\text{or, from} \tb{  \mathbf{0} \quad \u^T}
^T \text{to}   \tb{ 
\v^T \quad \mathbf{0}}^T}$ if and only if  $U_{M(G_\pm)}(\tau)\u=\alpha\u\pm \beta\v.$

\item $G_+$ admits $(\alpha,\beta)$-fractional revival at $\tau$ from $\u$ to $\v$ if and only if $X_1\ltimes X_2$ admits $(\alpha,\beta)$-fractional revival at $\tau$ from $\frac{1}{\sqrt{2}}\tb{\u^T \quad \u^T}
^T$ to   $\frac{1}{\sqrt{2}}\tb{\v^T \quad \v^T}
^T.$ 

\item $G_-$ admits $(\alpha,\beta)$-fractional revival at $\tau$ from $\u$ to $\v$ if and only if $X_1\ltimes X_2$ admits $(\alpha,\beta)$-fractional revival at $\tau$ from $\frac{1}{\sqrt{2}}\tb{\u^T \quad -\u^T}
^T$ to   $\frac{1}{\sqrt{2}}\tb{\v^T \quad -\v^T}
^T.$
\end{enumerate} 

\end{thm}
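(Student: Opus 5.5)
All five parts follow by direct computation from the block formula for $U_{M(X_1\ltimes X_2)}(t)$ in Theorem \ref{5t19}. Write $U_\pm=U_{M(G_\pm)}(\tau)$. For any $\x,\mathbf{y}\in\mathbb{R}^{|V|}$ we have
\[U_{M(X_1\ltimes X_2)}(\tau)\begin{bmatrix}\x\\ \mathbf{y}\end{bmatrix}=\frac12\begin{bmatrix}(U_++U_-)\x+(U_+-U_-)\mathbf{y}\\ (U_+-U_-)\x+(U_++U_-)\mathbf{y}\end{bmatrix}.\]
Each statement then reduces to comparing the two blocks of this vector with the two blocks of the target $\alpha(\cdot)+\beta(\cdot)$. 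The block system is then rewritten as equivalent equations for $U_+$ and $U_-$ by taking sums and differences of the block equations.

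\textbf{Part 1.} Take $\x=\u$ and $\mathbf{y}=\mathbf{0}$. The FR condition is equivalent to the pair of equations
\[\tfrac12(U_++U_-)\u=\alpha\u,\qquad \tfrac12(U_+-U_-)\u=\beta\u.\]
Adding and subtracting these gives $U_+\u=(\alpha+\beta)\u$ and $U_-\u=(\alpha-\beta)\u$, and the converse is immediate. Since $U_\pm$ are unitary, $|\alpha\pm\beta|=1$ holds automatically, so these equations say exactly that $\u$ is periodic with the stated phase factors.

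\textbf{Parts 2 and 3.} These use the same computation.
\begin{itemize}
\item For part 2, the target is $[\,(\alpha\u+\beta\v)^T\ \ \mathbf{0}^T\,]^T$. The conditions are $(U_++U_-)\u=2(\alpha\u+\beta\v)$ and $(U_+-U_-)\u=\mathbf{0}$, which together give $U_+\u=U_-\u=\alpha\u+\beta\v$.
\item For part 3, the target is $[\,\alpha\u^T\ \ \beta\v^T\,]^T$. Summing and differencing the block equations gives $U_\pm\u=\alpha\u\pm\beta\v$.
\end{itemize}
The parenthetical versions, starting from $[\mathbf{0}\ \ \u^T]^T$, follow by symmetry of the block matrix. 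Concretely, conjugating by the swap $A(K_2)\otimes I$ commutes with $U_{M(X_1\ltimes X_2)}$. Alternatively, one can recompute directly with $\x=\mathbf{0}$ and $\mathbf{y}=\u$, which gives the same equations.

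\textbf{Parts 4 and 5.} Here the cleanest route is the conjugation identity from the proof of Theorem \ref{5t19}:
\[(H\otimes I_n)\,U_{M(X_1\ltimes X_2)}(\tau)\,(H\otimes I_n)=\operatorname{diag}(U_+,U_-).\]
We also use $(H\otimes I_n)[\u^T\ \ \mathbf{0}]^T=\tfrac{1}{\sqrt2}[\u^T\ \ \u^T]^T$ and $(H\otimes I_n)[\mathbf{0}\ \ \u^T]^T=\tfrac{1}{\sqrt2}[\u^T\ \ -\u^T]^T$. Hence
\[U_{M(X_1\ltimes X_2)}(\tau)\tfrac{1}{\sqrt2}\begin{bmatrix}\u\\ \u\end{bmatrix}=\tfrac{1}{\sqrt2}\begin{bmatrix}U_+\u\\ U_+\u\end{bmatrix},\]
and the right-hand side equals $\alpha\frac{1}{\sqrt2}[\u^T\ \ \u^T]^T+\beta\frac{1}{\sqrt2}[\v^T\ \ \v^T]^T$ if and only if $U_+\u=\alpha\u+\beta\v$. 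Part 5 is identical with $U_-$ and the antisymmetric lifts.

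\textbf{Main obstacle.} There is no real obstacle; the argument is bookkeeping. The only care needed is on two points. First, linear independence of the lifted states must hold exactly when it holds for the original states; this is clear, since the lift maps are injective linear maps. Second, the normalisation factors $\tfrac12$ and $\tfrac{1}{\sqrt2}$ must be tracked consistently so that the same $(\alpha,\beta)$ appears on both sides.
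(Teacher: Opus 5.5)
Your proposal is correct and follows essentially the paper's route: apply the block formula of Theorem \ref{5t19} to the lifted vector, compare the two blocks with the target, and take sums and differences to isolate $U_{M(G_+)}(\tau)\u$ and $U_{M(G_-)}(\tau)\u$. The paper writes this out only for part 1 and says the remaining parts follow by similar arguments; your computations for parts 2--5 fill those in correctly, including the swap symmetry $A(K_2)\otimes I_n$ for the parenthetical cases and the $H\otimes I_n$ conjugation (which is the same identity underlying the block formula) for parts 4 and 5.
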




\begin{proof}
  Let $X_1\ltimes X_2$ admit $(\alpha,\beta)$-FR from $\tb{\u^T\quad\mathbf{0}}^T$ to $\tb{\mathbf{0}\quad\u^T}^T$ at $\tau.$ Then  
  \begin{equation}\label{5eq5}
      U_{M(X_1\ltimes X_2)}(\tau) \begin{bmatrix}
    \u \\ \mathbf{0}
\end{bmatrix}=\alpha \begin{bmatrix}
    \u \\ \mathbf{0}
\end{bmatrix}+\beta \begin{bmatrix}
   \mathbf{0} \\ \u
\end{bmatrix}=\begin{bmatrix}
      \alpha\u \\ \beta\u
  \end{bmatrix}.
  \end{equation}

Using Theorem \ref{5t19}, we have 
  \begin{equation}\label{5eqn6}
      U_{M({X_1\ltimes X_2})}(\tau)\cdot \begin{bmatrix}
    \u \\ \mathbf{0}
\end{bmatrix}=\frac{1}{2}\begin{bmatrix}
    \ob{U_{M(G_+)}(\tau)+U_{M(G_-)}(\tau)}\u \\ \ob{U_{M(G_+)}(\tau)-U_{M(G_-)}(\tau)}\u
\end{bmatrix}
.\end{equation}
Comparing \eqref{5eq5} and \eqref{5eqn6}, we obtain $U_{M(G_+)}(\tau)\u=(\alpha+\beta)\u$ and $U_{M(G_-)}(\tau)\u=(\alpha-\beta)\u.$
The converse part of the proof is immediate from \eqref{5eqn6}. This proves the first statement. The remaining statements follow by similar arguments.
\end{proof}
In particular, Theorem \ref{5t21}  relates FR from vertex state to FR from plus and pair states.

\begin{cor}\label{5cor3}
Suppose the premise of Theorem \ref{5t21} holds. If $\{0,1\}\times V$ is the vertex set of $X_1\ltimes X_2,$ then the following holds. 
\begin{enumerate}
\item $G_+$ admits $(\alpha,\beta)$-fractional revival at $\tau$ from vertex $a$ to $b,$ if and only if $X_1\ltimes X_2$ admits $(\alpha,\beta)$-fractional revival at $\tau$ from $\frac{1}{\sqrt{2}}(\e_{(0,a)}+\e_{(1,a)})$ to   $\frac{1}{\sqrt{2}}(\e_{(0,b)}+\e_{(1,b)}).$ 

\item $G_-$ admits $(\alpha,\beta)$-fractional revival at $\tau$ from vertex $a$ to $b$ if and only if $X_1\ltimes X_2$ admits $(\alpha,\beta)$-fractional revival at $\tau$ from $\frac{1}{\sqrt{2}}(\e_{(0,a)}-\e_{(1,a)})$ to $\frac{1}{\sqrt{2}}(\e_{(0,b)}-\e_{(1,b)}).$ 
\end{enumerate}  
\end{cor}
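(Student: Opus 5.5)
The plan is to obtain both parts of Corollary \ref{5cor3} as direct specializations of parts 4 and 5 of Theorem \ref{5t21}, taking the states $\u$ and $\v$ there to be the vertex states $\e_a$ and $\e_b$ of $V$.

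\textbf{Part 1.} Put $\u=\e_a$ and $\v=\e_b$ in statement 4 of Theorem \ref{5t21}. These are linearly independent states whenever $a\neq b$, which is implicit in the notion of fractional revival from $a$ to $b$.
\begin{itemize}
\item I identify the coordinates of $\mathbb{R}^{2|V|}$ with $\{0,1\}\times V$, consistent with the block form of $M(X_1\ltimes X_2)$: the first block is indexed by $\{0\}\times V$ and the second by $\{1\}\times V$.
\item Under this identification, $[\e_a^T\quad \mathbf{0}]^T=\e_{(0,a)}$ and $[\mathbf{0}\quad \e_a^T]^T=\e_{(1,a)}$.
\item Hence $\frac{1}{\sqrt{2}}[\e_a^T\quad \e_a^T]^T=\frac{1}{\sqrt{2}}(\e_{(0,a)}+\e_{(1,a)})$, and similarly for $b$.
\item Statement 4 then reads exactly as claimed: $G_+$ has $(\alpha,\beta)$-FR at $\tau$ from $\e_a$ to $\e_b$ if and only if $X_1\ltimes X_2$ has $(\alpha,\beta)$-FR at $\tau$ between the corresponding plus states.
\end{itemize}

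\textbf{Part 2.} The argument is the same, using statement 5 with $\u=\e_a$ and $\v=\e_b$. The vector $\frac{1}{\sqrt{2}}[\e_a^T\quad -\e_a^T]^T$ equals $\frac{1}{\sqrt{2}}(\e_{(0,a)}-\e_{(1,a)})$, which is a pair state.

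\textbf{Check on the underlying identity.} The only point needing care is that the equivalence in statements 4 and 5 holds at the level of the whole vector equation, not just componentwise. I would verify this via Theorem \ref{5t19}.
\begin{itemize}
\item Write $U_\pm=U_{M(G_\pm)}(\tau)$. Then $U_{M(X_1\ltimes X_2)}(\tau)[\u^T\quad\u^T]^T=[(U_+\u)^T\quad (U_+\u)^T]^T$, because the $U_-$ terms cancel.
\item So $U_{M(X_1\ltimes X_2)}(\tau)\tfrac{1}{\sqrt2}[\u^T\quad\u^T]^T=\alpha\tfrac{1}{\sqrt2}[\u^T\quad\u^T]^T+\beta\tfrac{1}{\sqrt2}[\v^T\quad\v^T]^T$ holds if and only if $U_+\u=\alpha\u+\beta\v$.
\item Similarly, $U_{M(X_1\ltimes X_2)}(\tau)[\u^T\quad-\u^T]^T=[(U_-\u)^T\quad -(U_-\u)^T]^T$.
\end{itemize}

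I expect no real obstacle; the corollary is a relabeling of Theorem \ref{5t21}.
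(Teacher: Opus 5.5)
Your proposal is correct and follows the paper's route: the corollary is parts 4 and 5 of Theorem \ref{5t21} specialized to $\u=\e_a$ and $\v=\e_b$, under the identification $[\e_a^T\quad\mathbf{0}]^T=\e_{(0,a)}$ and $[\mathbf{0}\quad\e_a^T]^T=\e_{(1,a)}$. Your block computation from Theorem \ref{5t19} correctly fills in the ``similar arguments'' that the paper leaves implicit for those two parts.
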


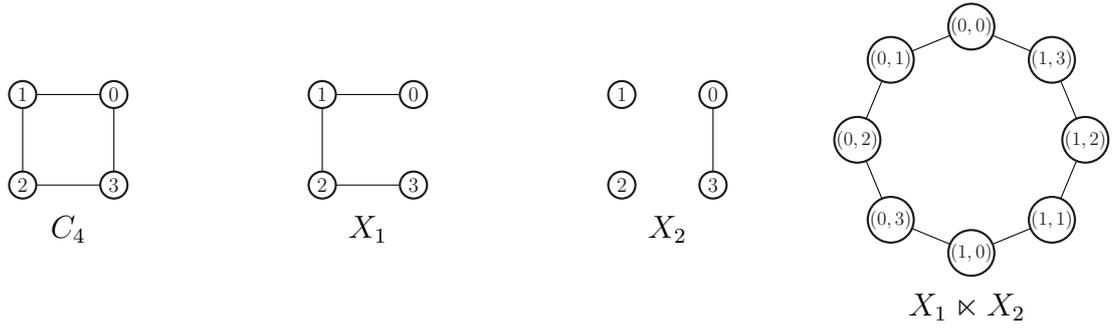
\begin{figure}[]
\centering

\begin{tabular}{cccc}

\begin{minipage}{0.22\textwidth}\centering
\begin{tikzpicture}[scale=1.2,auto=left]
\tikzstyle{every node}=[circle, thick, black!90, fill=white, scale=0.65]
\node[draw,minimum size=0.55cm, inner sep=0 pt] (1) at (1,1) {$0$};
\node[draw,minimum size=0.55cm, inner sep=0 pt] (2) at (0,1) {$1$};
\node[draw,minimum size=0.55cm, inner sep=0 pt] (3) at (0,0) {$2$};
\node[draw,minimum size=0.55cm, inner sep=0 pt] (4) at (1,0) {$3$};
\draw (1)-- (2)-- (3)--(4)-- (1);
\end{tikzpicture}

$C_4$
\end{minipage}
&
\begin{minipage}{0.22\textwidth}\centering
\begin{tikzpicture}[scale=1.2,auto=left]
\tikzstyle{every node}=[circle, thick, black!90, fill=white, scale=0.65]
\node[draw,minimum size=0.55cm, inner sep=0 pt] (1) at (1,1) {$0$};
\node[draw,minimum size=0.55cm, inner sep=0 pt] (2) at (0,1) {$1$};
\node[draw,minimum size=0.55cm, inner sep=0 pt] (3) at (0,0) {$2$};
\node[draw,minimum size=0.55cm, inner sep=0 pt] (4) at (1,0) {$3$};
\draw (1)--(2)--(3)--(4);
\end{tikzpicture}

$X_1$
\end{minipage}
&
\begin{minipage}{0.22\textwidth}\centering
\begin{tikzpicture}[scale=1.2,auto=left]
\tikzstyle{every node}=[circle, thick, black!90, fill=white, scale=0.65]
\node[draw,minimum size=0.55cm, inner sep=0 pt] (1) at (1,1) {$0$};
\node[draw,minimum size=0.55cm, inner sep=0 pt] (2) at (0,1) {$1$};
\node[draw,minimum size=0.55cm, inner sep=0 pt] (3) at (0,0) {$2$};
\node[draw,minimum size=0.55cm, inner sep=0 pt] (4) at (1,0) {$3$};
\draw (4)-- (1);
\end{tikzpicture}

$X_2$
\end{minipage}
&
\begin{minipage}{0.22\textwidth}\centering
\begin{tikzpicture}[ scale=0.5]
\tikzstyle{every node}=[circle, thick, black!90, fill=white, scale=0.6]
\coordinate (top) at (6,2.1);
\coordinate (bottom) at (6,-2.1);
\node[draw,minimum size=0.6cm,inner sep=0 pt] (0) at (3,0) {$(1,2)$};
\node[draw,minimum size=0.6cm,inner sep=0 pt] (1) at (2.12,2.12) {$(1,3)$};
\node[draw,minimum size=0.6cm, inner sep=0 pt] (2) at (0,3){$(0,0)$};
\node[draw,minimum size=0.6cm, inner sep=0 pt] (3) at (-2.12,2.12) {$(0,1)$};
\node[draw,minimum size=0.6cm, inner sep=0 pt] (4) at (-3,0) {$(0,2)$};
\node[draw,minimum size=0.6cm, inner sep=0 pt] (5) at (-2.12,-2.12) {$(0,3)$};
\node[draw,minimum size=0.6cm, inner sep=0 pt] (6) at (0,-3) {$(1,0)$};
\node[draw,minimum size=0.6cm, inner sep=0 pt] (7) at (2.12,-2.12) {$(1,1)$};
\draw (0)--(1)--(2)--(3)--(4)--(5)--(6)--(7)--(0);
\end{tikzpicture}

$X_1 \ltimes X_2$
\end{minipage}

\end{tabular}

\caption{$C_8$ as double cover of $C_4.$}
\label{5fig3}
\end{figure}

One may observe from Corollary~\ref{5cor3} that if a graph $G$ admits vertex PST with respect to $M(G)$, then every double cover $X_1\ltimes X_2$ of $G$ exhibits plus PST with respect to $M(X_1\ltimes X_2).$ Consider the cycle $C_4$, shown in Figure~\ref{5fig3}, together with two spanning subgraphs $X_1$ and $X_2$. It is well known that $C_4$ has vertex PST between the vertices $0$ and $2$ at time $\frac{\pi}{2}$. By Corollary~\ref{5cor3}(1), the double cover $X_1\ltimes X_2$ of $C_4$, which is the cycle $C_8$, exhibits plus PST at time $\frac{\pi}{2}$ between $\frac{1}{\sqrt{2}}\big(\e_{(0,0)}+\e_{(1,0)}\big)$ and $\frac{1}{\sqrt{2}}\big(\e_{(0,2)}+\e_{(1,2)}\big),$  which is consistent with \cite[Theorem 6.5]{kim}.

\section{Plus PGST in cycles}\label{5sec5}


Let $(\Gamma,+)$ be a finite abelian group, and let $S \subseteq\Gamma\backslash\{0\}$ be a connection set satisfying $\left\lbrace -s:s\in S\right\rbrace=S.$ The \emph{Cayley graph}, denoted by $\text{Cay}(\Gamma, S),$ has the vertex set $\Gamma$ where two vertices $a$ and $b$ are adjacent if and only if $a-b\in S$. The Cayley graph over $\mathbb{Z}_n,$ the group of integers modulo $n,$ is called a \emph{circulant graph}. In particular, the cycle $C_n$ is a circulant graph over $\mathbb{Z}_n$ with the connection set $\{1,n-1\}.$
Since a circulant graph is regular,
we investigate plus PGST with respect to the adjacency matrix.
 If $n$ is odd, then there exists an automorphism of $\text{Cay}(\mathbb{Z}_n, S)$ fixing only one vertex. Hence, by Theorem \ref{5l7}, if $\text{Cay}(\mathbb{Z}_n, S)$ admits plus PGST, then the graph must exhibit pair PGST. However, there is no pair PGST in a Cayley graph over an abelian group of odd order \cite [Theorem 10]{pal10}, leading to the following result.   

\begin{lem}\label{5l8}
    A circulant graph of odd order does not exhibit pretty good plus state transfer.
\end{lem}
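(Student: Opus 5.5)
The argument goes by contradiction. We combine Theorem~\ref{5l7} with the nonexistence of pair PGST in Cayley graphs over abelian groups of odd order \cite[Theorem 10]{pal10}. The only work is to produce, for an arbitrary plus state, an automorphism satisfying condition~1 of Theorem~\ref{5l7}.

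Let $G=\text{Cay}(\mathbb{Z}_n,S)$ with $n$ odd. Suppose $G$ admits plus PGST between $\frac{1}{\sqrt{2}}\ob{\e_a+\e_b}$ and $\frac{1}{\sqrt{2}}\ob{\e_c+\e_d}$, where $a\neq b$. Consider the reflection $f:\mathbb{Z}_n\to\mathbb{Z}_n$ given by $f(x)=2a-x$.

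\emph{Step 1: $f$ is an automorphism of $G$.} It is a bijection. Moreover $f(x)-f(y)=y-x$, and $S=\{-s:s\in S\}$. Hence $x-y\in S$ if and only if $f(x)-f(y)\in S$, so $f$ preserves adjacency and non-adjacency.

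\emph{Step 2: $f$ fixes $a$ and no other vertex.} Clearly $f(a)=a$. If $f(x)=x$, then $2x=2a$ in $\mathbb{Z}_n$. Since $n$ is odd, $2$ is invertible modulo $n$, so $x=a$. In particular $f(b)\neq b$, because $b\neq a$.

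\emph{Step 3: conclude.} Let $P$ be the permutation matrix of $f$. Steps 1 and 2 give $P\e_a=\e_a$ and $P\e_b\neq\e_b$, which is condition~1 of Theorem~\ref{5l7}. Therefore $G$ admits pair PGST. This contradicts \cite[Theorem 10]{pal10}, since $\mathbb{Z}_n$ is an abelian group of odd order. Hence no circulant graph of odd order exhibits plus PGST.

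The main obstacle is Step 2, specifically that the fixed point of $f$ must be exactly the vertex $a$ of the given plus state. This is why we use the reflection centred at $a$ rather than the fixed map $x\mapsto -x$, which fixes only $0$. Everything else is a direct citation of Theorem~\ref{5l7} and \cite[Theorem 10]{pal10}.
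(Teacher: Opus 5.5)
Your proof is correct and is exactly the paper's argument: an automorphism fixing only the vertex $a$, then condition~1 of Theorem~\ref{5l7} to get pair PGST, then \cite[Theorem 10]{pal10}; your explicit reflection $x\mapsto 2a-x$ makes precise the paper's looser phrase ``an automorphism fixing only one vertex''. One subtlety comes from the proof of Theorem~\ref{5l7}, not from how you apply it: if the target is $\frac{1}{\sqrt{2}}(\e_a+\e_{2a-b})$, the subtraction only gives $U(t_k)(\e_b-\e_{2a-b})\to-\gamma(\e_b-\e_{2a-b})$, which is not a transfer between linearly independent pair states; you can exclude this case by running the same norm argument with the reflection about $b$ as well, since the two reflections together force $a,b\in\{c,d\}$, i.e.\ $\{c,d\}=\{a,b\}$, a contradiction that does not even need \cite{pal10}.
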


A pair of vertices $a$ and $b$ in a circulant graph $\text{Cay}(\mathbb{Z}_n, S)$ are called \emph{antipodal vertices} if $a-b=\frac{n}{2}.$ The existence of plus PGST from a pair of non-antipodal vertices in the circulant graph of even order is shown in the following result. 
\begin{lem}\label{5l9}
Let $a$ and $b$ be non-antipodal vertices of the circulant graph $\text{Cay}(\mathbb{Z}_n, S),$ where $n$ is even. If $\text{Cay}(\mathbb{Z}_n, S)$ exhibits pretty good plus state transfer between $\frac{1}{\sqrt{2}}\ob{\e_a+\e_b}$ and $\frac{1}{\sqrt{2}}\ob{\e_c+\e_d},$ then $\e_c+\e_d=\e_{a+\frac{n}{2}}+\e_{b+\frac{n}{2}}.$
\end{lem}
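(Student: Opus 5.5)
The plan is to combine two constraints on $\{c,d\}$. The first is a symmetry constraint coming from an automorphism that fixes the plus state $\frac{1}{\sqrt{2}}\ob{\e_a+\e_b}$. The second is a spectral constraint coming from the moduli of the coefficients of the states in the character basis of $\mathbb{Z}_n$. Throughout we write $\omega=\exp(2\pi i/n)$. We use that $c\neq d$, since $\frac{1}{\sqrt2}(\e_c+\e_d)$ is a plus state, and that $\{c,d\}\neq\{a,b\}$, since the two states are linearly independent.

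\emph{Symmetry step.} The map $f(x)=a+b-x$ on $\mathbb{Z}_n$ is an automorphism of $\text{Cay}(\mathbb{Z}_n,S)$, because $f(x)-f(y)=-(x-y)$ and $S=-S$. Its permutation matrix $P$ commutes with $U(t)$ and satisfies $P(\e_a+\e_b)=\e_a+\e_b$. Applying $P$ to the limit $\lim_k U(t_k)(\e_a+\e_b)=\gamma(\e_c+\e_d)$ gives $P(\e_c+\e_d)=\e_c+\e_d$. Hence $f$ preserves the set $\{c,d\}$, so there are two possibilities:
\begin{enumerate}
\item[(i)] $c+d\equiv a+b \pmod n$, when $f$ swaps $c$ and $d$;
\item[(ii)] $2c\equiv 2d\equiv a+b \pmod n$, when $f$ fixes both $c$ and $d$.
\end{enumerate}

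\emph{Spectral step.} The characters $\chi_j(x)=\omega^{jx}$, $j\in\mathbb{Z}_n$, form an orthogonal eigenbasis of $A(\text{Cay}(\mathbb{Z}_n,S))$ with eigenvalues $\lambda_j$. Taking the inner product of the PGST limit with $\chi_j$ gives
\[\lim_{k\to\infty} e^{it_k\lambda_j}\ob{\omega^{-ja}+\omega^{-jb}}=\gamma\ob{\omega^{-jc}+\omega^{-jd}}.\]
Taking moduli yields $|1+\omega^{j(b-a)}|=|1+\omega^{j(d-c)}|$ for every $j$. This is equivalent to $\cos\frac{2\pi j(b-a)}{n}=\cos\frac{2\pi j(d-c)}{n}$ for all $j$. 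Choosing $j=1$ gives $d-c\equiv\pm(b-a)\pmod n$. This step is where the real content lies, and it is straightforward once one uses individual characters rather than eigenprojections. If one prefers to stay with eigenprojections, note that $\chi_j$ and $\chi_{-j}$ may share an eigenvalue, but the character-wise identity still holds because $U(t)$ is diagonal in the character basis.

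\emph{Case analysis.} In case (ii), $2(d-c)\equiv 0$ and $c\neq d$ force $d-c\equiv \frac n2$. Then $b-a\equiv\pm\frac n2$, so $a$ and $b$ are antipodal, which is a contradiction. In case (i), combine $c+d\equiv a+b$ with the spectral constraint.
\begin{enumerate}
\item[$\bullet$] If $d-c\equiv b-a$, subtracting gives $2c\equiv 2a$, so $c\in\{a,a+\frac n2\}$. The choice $c=a$ gives $d=b$, which violates linear independence. Hence $c=a+\frac n2$ and $d=b+\frac n2$.
\item[$\bullet$] If $d-c\equiv a-b$, the same argument gives $2c\equiv 2b$, so $c=b+\frac n2$ and $d=a+\frac n2$.
\end{enumerate}
In either subcase $\e_c+\e_d=\e_{a+\frac n2}+\e_{b+\frac n2}$, as claimed.
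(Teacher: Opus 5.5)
Your proof is correct, and it takes a different route from the paper.

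\textbf{The paper's route.} The paper argues purely with automorphisms. It normalizes to $a=0$, $0<b<\frac n2$. It then uses the reflection $j\mapsto -j$, together with the norm-counting argument of Theorem~\ref{5l7}, to force $c\in\{0,\frac n2\}$. The reflection $j\mapsto 2b-j$ then pins down $d$. Finally, in each case the translation $j\mapsto j+b$ reduces the remaining bad possibility to pair PGST from an antipodal pair state. That case is excluded by the external result \cite[Lemma 1]{pal10}.

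\textbf{Your route.} You use the single reflection $x\mapsto a+b-x$. It swaps $a$ and $b$, so it fixes the initial plus state, and uniqueness of the limit makes $\{c,d\}$ invariant under it. This is the device the paper uses in Lemma~\ref{5l10}, but not in this proof. You then replace the remaining automorphism and norm arguments with one spectral observation. Since $U(t)$ is diagonal in the character basis with unimodular entries, $|\chi_j^*(\e_a+\e_b)|=|\chi_j^*(\e_c+\e_d)|$ for every $j$. Taking $j=1$ already gives $d-c\equiv\pm(b-a)$.

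\textbf{What each approach buys.} Your argument is shorter and self-contained, with no appeal to \cite{pal10} and a two-line case analysis. The same two constraints also settle the antipodal situation of Lemma~\ref{5l10}. There, case (i) forces $\{c,d\}=\{a,b\}$, which is excluded, and case (ii) forces $4\mid n$ and $\{c,d\}=\{a+\frac n4,b+\frac n4\}$. The price is that you rely on the explicit common eigenbasis of circulants. The paper's proof uses only the automorphism group, in keeping with the theme of Section~\ref{5s2}.
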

\begin{proof}
     Without loss of generality, let $a=0$ and $b<\frac{n}{2}.$ Suppose $\text{Cay}(\mathbb{Z}_n, S)$ admits plus PGST between $\frac{1}{\sqrt{2}}\ob{\e_0+\e_b}$ and $\frac{1}{\sqrt{2}}\ob{\e_c+\e_d}.$   Consider the automorphism of $\text{Cay}(\mathbb{Z}_n, S)$ with permutation matrix $P$ satisfying $P\e_j=\e_{n-j}$ that fixes only $0$ and $\frac{n}{2}.$ Since $b\neq \frac{n}{2},$ it follows from \eqref{5eqn8} as argued in the proof of Theorem \ref{5l7}, that $\frac{1}{\sqrt{2}}\ob{\e_c+\e_d-P\e_c-P\e_d}$ must be a pair state. If $\e_c=P\e_d,$ then $\e_d=P\e_c,$ which yields  $\frac{1}{\sqrt{2}}\ob{\e_c+\e_d-P\e_c-P\e_d}=0,$ a contradiction.  Consequently, the automorphism must fix either $c$ or $d,$ that is, one of them lies in the set $\{0, \frac{n}{2}\}.$ 
Therefore, without loss of generality, assume that $c\in\{0,\frac{n}{2}\}.$ 
\\\textit{Case I} $(c=0)$:
When $c=0,$ the graph $\text{Cay}(\mathbb{Z}_n, S)$  admits plus PGST between $\frac{1}{\sqrt{2}}\ob{\e_0+\e_b}$ and $\frac{1}{\sqrt{2}}\ob{\e_0+\e_d}.$
Consider an automorphism of $\text{Cay}(\mathbb{Z}_n, S)$ with permutation matrix $P'$ such that $P'\e_j=\e_{2b-j}$ which fixes only $b$ and $ b+\frac{n}{2}.$ Using a similar argument as before, we observe that $\frac{1}{\sqrt{2}}\ob{\e_0+\e_d-P'\e_0-P'\e_d}$ must be a pair state. Since $P'$ does not fix $\e_0,$ it must fix $\e_d.$ 
Consequently, $d=b+\frac{n}{2},$ and plus PGST occurs in $\text{Cay}(\mathbb{Z}_n, S)$  between $\frac{1}{\sqrt{2}}\ob{\e_0+\e_b}$ and $\frac{1}{\sqrt{2}}\ob{\e_0+\e_{b+\frac{n}{2}}}.$ Now, consider another automorphism with permutation matrix $P''$ such that $P''\e_j=\e_{b+j}.$ Since $b<\frac{n}{2},$ we have $ P''\e_b\neq\e_0.$ Using an argument similar to that in the proof of Theorem \ref{5l7}, we obtain 
   \begin{equation}\label{5eq18}
       \lim_{k \to \infty}U\ob{t_k}\ob{\e_0-\e_{2b}}=\gamma\ob{\e_0+\e_{b+\frac{n}{2}}-\e_b-\e_{2b+\frac{n}{2}}}.
   \end{equation} 
   Since $\e_0\notin\cb{ \e_b,\e_{b+\frac{n}{2}}},$ we have  $\e_{b+\frac{n}{2}}\neq \e_{2b+\frac{n}{2}}.$ Then the equality in \eqref{5eq18} holds only if $\e_0=\e_{2b+\frac{n}{2}},$ which implies $b=\frac{n}{4}.$ Consequently, pair PGST occurs in $\text{Cay}(\mathbb{Z}_n, S)$ from $\frac{1}{\sqrt{2}}\ob{\e_0-\e_\frac{n}{2}},$ which is a contradiction to \cite[Lemma 1]{pal10} as there is an automorphism in $\text{Cay}(\mathbb{Z}_n, S)$ that fixes only $0$ and $\frac{n}{2}.$ 
 \\\textit{Case II} $(c=\frac{n}{2}):$ If $c=\frac{n}{2},$ then $\text{Cay}(\mathbb{Z}_n, S)$  admits plus PGST between $\frac{1}{\sqrt{2}}\ob{\e_0+\e_b}$ and $\frac{1}{\sqrt{2}}\ob{\e_\frac{n}{2}+\e_d}.$ Since the automorphism with permutation matrix $P'$ fixes only $b$ and $b+\frac{n}{2},$ it follows that $\frac{1}{\sqrt{2}}\ob{\e_\frac{n}{2}+\e_d-\e_{2b-\frac{n}{2}}-\e_{2b-d}}$ is a pair state as observed from \eqref{5eqn8}. If  $d= 2b-\frac{n}{2},$ then  the state is $\mathbf{0},$ a contradiction. Since $\e_\frac{n}{2}\neq \e_{2b-\frac{n}{2}},$ then the only possibility is either $d=b$ or $d= b+\frac{n}{2}.$ If $d=b,$ then plus PGST occurs between $\frac{1}{\sqrt{2}}\ob{\e_0+\e_b}$ and $\frac{1}{\sqrt{2}}\ob{\e_\frac{n}{2}+\e_b}.$ Using the permutation matrix $P''$ and applying a similar argument as before, we find pair PGST occurs from $\frac{1}{\sqrt{2}}\ob{\e_\frac{n}{4}-\e_\frac{3n}{4}},$ that is from a pair of antipodal vertices. This is again a contradiction to \cite[Lemma 1]{pal10}. Therefore, we must have $d=b+\frac{n}{2}.$
  \end{proof} 

We now characterize plus PGST for the case when 
$a$ and 
$b$ are antipodal vertices in a circulant graph of even order.
\begin{lem}\label{5l10}
    Let $n$ be even and $a$ and $b$ be antipodal vertices in the circulant graph $\text{Cay}(\mathbb{Z}_n, S).$ If there is pretty good plus state transfer from $\frac{1}{\sqrt{2}}\ob{\e_a+\e_b},$ then $n$ is divisible by $4$ and pretty good plus state transfer occurs between $\frac{1}{\sqrt{2}}\ob{\e_a+\e_b}$ and $\frac{1}{\sqrt{2}}\ob{\e_{a+\frac{n}{4}}+\e_{b+\frac{n}{4}}}.$  
\end{lem}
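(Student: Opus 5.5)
Without loss of generality (translation is an automorphism) take $a=0$ and $b=\frac{n}{2}$. Suppose $\text{Cay}(\mathbb{Z}_n,S)$ admits plus PGST between $\frac{1}{\sqrt{2}}\ob{\e_0+\e_{\frac{n}{2}}}$ and $\frac{1}{\sqrt{2}}\ob{\e_c+\e_d}$, so that $\lim_{k\to\infty}U(t_k)(\e_0+\e_{\frac n2})=\gamma(\e_c+\e_d)$. The plan is to use the automorphism argument of Theorem \ref{5l7} and Lemma \ref{5l9} repeatedly, each time with an automorphism that fixes the initial state. Such an automorphism forces the difference $\e_c+\e_d-P\e_c-P\e_d$ to have norm $0$, so $\{c,d\}$ must be $P$-invariant. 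Intersecting these constraints will pin down $\{c,d\}$.

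\textbf{Step 1 (reflection).} Take the reflection $P\e_j=\e_{-j}$. It fixes both $0$ and $\frac n2$, hence fixes $\e_0+\e_{\frac n2}$. Applying $P$ to the limit and subtracting gives $\lim U(t_k)\mathbf{0}=\gamma(\e_c+\e_d-P\e_c-P\e_d)$, so $\{c,d\}=\{-c,-d\}$. Thus either $\{c,d\}\subseteq\{0,\frac n2\}$, which means $\{c,d\}=\{0,\frac n2\}$ and contradicts linear independence, or $d=-c$ with $c\notin\{0,\frac n2\}$.

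\textbf{Step 2 (antipodal translation).} Take the translation $T\e_j=\e_{j+\frac n2}$. It also fixes $\e_0+\e_{\frac n2}$, so $\{c,-c\}=\{c+\frac n2,-c+\frac n2\}$. Since $c\neq c+\frac n2$, we need $-c=c+\frac n2$, that is, $2c\equiv\frac n2\pmod n$. This forces $\frac n2$ to be even, so $4\mid n$, and gives $c\in\{\frac n4,\frac{3n}4\}$. Hence $\{c,d\}=\{\frac n4,\frac{3n}4\}=\{a+\frac n4,b+\frac n4\}$.

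This gives both conclusions of the lemma. The only delicate point is that the same limit argument works when the automorphism fixes the source state exactly, rather than only partially as in Theorem \ref{5l7}. There the left side becomes $U(t_k)\mathbf 0=\mathbf 0$ while the right side is $\gamma$ times a vector, and $|\gamma|=1$, so that vector must vanish. This is immediate. I expect no real obstacle beyond carefully excluding the degenerate case $\{c,d\}=\{a,b\}$, which is ruled out by the linear independence required in the definition of PGST.
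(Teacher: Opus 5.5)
Your proof is correct and follows the paper's argument: both use the antipodal translation $j\mapsto j+\frac{n}{2}$ and the reflection $j\mapsto -j$, each of which fixes the source state, together with uniqueness of the limit to force $\{c,d\}$ to be invariant under both maps (the paper applies them in the opposite order). Your version also spells out two points the paper leaves implicit: the exclusion of $\{c,d\}=\{0,\frac{n}{2}\}$ by linear independence, and the derivation of $4\mid n$ from $2c\equiv \frac{n}{2}\pmod n$.
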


\begin{proof}
    Without loss of generality, let $a=0$ and $b=\frac{n}{2}.$ Suppose $\text{Cay}(\mathbb{Z}_n, S)$ admits plus PGST between $\frac{1}{\sqrt{2}}\ob{\e_0+\e_{\frac{n}{2}}}$ and $\frac{1}{\sqrt{2}}\ob{\e_c+\e_d}.$Then by \eqref{5eq2}, we have 
\begin{equation}\label{5e13}
       \lim_{k \to \infty}U\ob{t_k}\ob{\e_0+\e_{\frac{n}{2}}}=\gamma\ob{\e_c+\e_d}.
   \end{equation}
    Let $P$ be the permutation matrix corresponding to the automorphism of $\text{Cay}(\mathbb{Z}_n, S)$ satisfying $P\e_j=\e_{j+\frac{n}{2}}.$ Since $P(\e_0+\e_{\frac{n}{2}})=\e_0+\e_{\frac{n}{2}},$ and the limit in \eqref{5e13} is unique, we have
$P(\e_c+\e_d)=\e_c+\e_d.$
Therefore, $c$ and $d$ are antipodal vertices. Let $P'$ be the permutation matrix corresponding to the automorphism of $\text{Cay}(\mathbb{Z}_n, S)$ such that $P'\e_j=\e_{n-j} $ that fixes only $0$ and $\frac{n}{2}.$ Multiplying $P'$ on both sides of \eqref{5e13}, gives 
\begin{equation}\label{5e15}
P'(\e_c+\e_d)=\e_c+\e_d.
\end{equation}
The only antipodal vertices satisfying $\eqref{5e15},$ are $c=\frac{n}{4}$ and $d=\frac{3n}{4}.$ 
\end{proof}
 The eigenvalues and eigenvectors of a cycle are well known. Suppose $\omega_n=\exp {\ob{\frac{2\pi i}{n}}}$
is
the primitive $n$-th root of unity. Then the eigenvalues of $C_n$ \cite{bro} are
$\lambda_l=2\cos\ob{{\frac{2l\pi}{n}}},$ where $ 0\leq l\leq n-1,   
$
and the corresponding eigenvectors are \begin{equation}\label{5eq7}\x_l =\tb{1\quad \omega_n^l\quad\cdots\quad\omega_n^{l(n-1)}}^T.
\end{equation}
The complement $\overline{C_n}$ has eigenvalues $\mu_0=n-\lambda_0-1,$ and $\mu_l=-1-\lambda_l,$ for $1\leq l \leq n-1,$ corresponding to the eigenvectors as given in \eqref{5eq7}.
A complete characterization of vertex PGST and pair PGST in $C_n$ is given in \cite[Theorem 13]{pal4} and \cite[Theorem 10]{pal10} as follows.

\begin{thm}\cite{pal4}\label{5t24} A cycle $C_n$ as well as its complement $\overline{C_n}$ admit pretty good state transfer
if and only if $n=2^k$ for $k\geq 2,$ and it occurs between every pair of antipodal vertices.
\end{thm}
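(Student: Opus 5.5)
The plan is to reduce vertex PGST to a statement about the eigenvalues alone, and then to decide that statement using the rational linear relations among the numbers $\cos(2\pi l/n)$.

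\emph{Locating the target.} Since $C_n$ is vertex-transitive, we may take the initial vertex to be $0$. Suppose PGST occurs from $\e_0$ to $\e_c$. The automorphism $P\e_j=\e_{n-j}$ fixes $0$. Applying the argument of Theorem~\ref{5th1}, with the plus state replaced by a vertex state, shows that $P$ must also fix $c$. Hence $n$ is even and $c=\frac{n}{2}$; in particular odd cycles are excluded. From \eqref{5eq7}, the eigenvalue support of $\e_0$ is $\{\lambda_l:0\le l\le \frac n2\}$, and
\[E_{\lambda_l}\e_{n/2}=(-1)^l E_{\lambda_l}\e_0 .\]
So $0$ and $\frac n2$ are strongly cospectral. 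Consequently, PGST between them is equivalent to the existence of times $t_k$ and a phase $\gamma$ with $e^{it_k\lambda_l}\to\gamma(-1)^l$ for every $l$. By Kronecker's approximation theorem (in the standard form used in \cite{pal4}), this holds if and only if the following condition is satisfied:
\[
\sum_{l} m_l\lambda_l=0 \text{ with } m_l\in\mathbb{Z},\ \sum_l m_l=0 \quad\Longrightarrow\quad \sum_{l \text{ odd}} m_l\equiv 0 \pmod 2 .
\]

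\emph{Sufficiency for $n=2^k$.} The case $n=4$ is checked directly, since there is PST at $\frac{\pi}{2}$. For $k\ge 3$, I would use the fact that the numbers $\cos(2\pi l/2^k)$ for $0\le l<2^{k-2}$ are linearly independent over $\mathbb{Q}$; this follows from the degree of the maximal real subfield of $\mathbb{Q}(\omega_{2^k})$. Every rational relation among the $\lambda_l$ is then generated by $\lambda_{n/4}=0$ and $\lambda_l+\lambda_{n/2-l}=0$. Because $\frac n2$ and $\frac n4$ are even, $l$ and $\frac n2-l$ have the same parity and $\frac n4$ itself is even. Hence every generating relation contributes an even amount to $\sum_{l\text{ odd}}m_l$, and the condition holds.

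\emph{Necessity.} Write $n=2^a m$ with $m>1$ odd and $a\ge1$. Take the real part of $\sum_{j=0}^{m-1}\omega_n^{\,l+2^a j}=0$ with $l$ odd. This gives a relation $\sum_{j=0}^{m-1}\lambda_{l+2^a j}=0$ among $m$ eigenvalues. Every index $l+2^aj$ is odd, and folding an index into the range $[0,\frac n2]$ via $\lambda_r=\lambda_{n-r}$ preserves parity because $n$ is even. The zero-sum requirement on the $m_l$ can be restored by adding a suitable multiple of $\lambda_{n/4}=0$ if $4\mid n$; if $4\nmid n$, I would use instead the difference of two such coset relations, or the relation for $l$ even, and I still need to check that one of these choices keeps the odd count odd. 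In the simplest version the odd-indexed coefficients sum to $m$, which is odd, so the condition fails and there is no PGST.

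\emph{Complement.} For $\overline{C_n}$ the eigenvalues are $\mu_l=-1-\lambda_l$ for $l\ge1$, together with the single exceptional value $\mu_0$. Relations with integer coefficients summing to zero are unaffected by the shift by $-1$. The only new ingredient is $\mu_0=n-3$; I would handle it by noting that PGST requires $e^{it_k(\mu_0-\mu_{l})}\to(-1)^l$. This adds an integer eigenvalue to the relations, and I would check it with the same parity bookkeeping. Alternatively, I would use that $\e_0-\e_{n/2}$ is orthogonal to $\mathbf{1}$ together with Theorem~\ref{5th2}-style reasoning.

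I expect the main obstacle to be the Galois-theoretic input: that the $\mathbb{Q}$-relations among $\cos(2\pi l/2^k)$ are exactly the listed ones, and the parity care in the necessity step.
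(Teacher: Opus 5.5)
Your framework is sound and is essentially the one behind \cite{pal4}. The reflection $j\mapsto n-j$ forces the target to be $\frac n2$. The identity $E_{\lambda_l}\e_{n/2}=(-1)^lE_{\lambda_l}\e_0$ gives strong cospectrality. Kronecker's theorem yields exactly the parity criterion you state. Your sufficiency argument for $C_{2^k}$ is also correct: the $\mathbb{Q}$-basis $\cos(2\pi l/2^k)$, $0\le l<2^{k-2}$, forces $m_l=m_{n/2-l}$.

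\textbf{The gap is in necessity.} When $n=4m$ with $m$ odd, the index $\frac n4=m$ is itself odd. Adding $-m\lambda_{n/4}$ therefore restores the zero sum but also removes $m$ from the odd-index count, leaving it even. For example, with $n=12$ your relation $\lambda_1+\lambda_5+\lambda_3=0$ becomes $\lambda_1+\lambda_5-2\lambda_3=0$, whose odd-index coefficients sum to $0$, so no contradiction results. The case $4\nmid n$ you leave open. The missing idea is to balance the odd-shift coset relation against an even-shift coset relation, not against $\lambda_{n/4}$. For every $a\ge1$, the relation $\sum_{j=0}^{m-1}\lambda_{1+2^aj}-\sum_{j=0}^{m-1}\lambda_{2+2^aj}=0$ has coefficient sum $0$ and odd-index count $m$, which is odd. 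This is exactly the mechanism of \eqref{5e25}, taken from the proof in \cite{pal4}. That identity is twice the real part of $(\omega_n^2-\omega_n)\sum_{r=0}^{p-1}\omega_p^{r}=0$ for an odd prime $p\mid n$, i.e.\ a sum of $p$ differences $\lambda_{s+1}-\lambda_s$. Each difference forces $e^{it_k(\lambda_{s+1}-\lambda_s)}\to-1$, so $e^{it_kW}\to(-1)^p=-1$ although $W=0$.

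\textbf{The complement also needs more than you sketch.} Since $\mu_0=n-3\neq-1-\lambda_0$, the zero-sum $\mu$-relations are the zero-sum integer vectors with $\sum_l m_l\lambda_l=n\,m_0$. Hence a $\lambda$-relation transfers only if its coefficient at index $0$ vanishes. The shift-$2$ coset avoids residue $0$ only when its modulus exceeds $2$. Using cosets modulo $\frac np$ as in \eqref{5e25}, this fails precisely for $n=2p$, where every even-shift coset is the full set of even residues. That case needs a separate argument. The paper handles the analogous situation (Case~I of Lemma~\ref{5lemm6} with $m=2$) via $\mu_p=1$ and $\mu_1+\mu_{p-1}=-2$, obtaining both $\gamma^{2p}=-1$ and $\gamma^4=-1$. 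In your language, the relation $2\bigl(2\sum_{j=1}^{(p-1)/2}\lambda_{2j}-\lambda_p\bigr)-(p-2)(\lambda_1+\lambda_{p-1})=0$ does the job: it avoids index $0$, has coefficient sum $0$, and has odd-index count $-p$.

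Your alternative via Theorem~\ref{5th2} does not apply to vertex states, because $\mathbf{1}^T\e_0\neq0$; it controls only the $\e_0-\e_{n/2}$ component. For sufficiency, your bookkeeping does work for $n=2^k$, $k\ge3$, since the $\mu$-relations still force $m_l=m_{n/2-l}$ for $1\le l<\frac n4$. By contrast, \cite{pal4} produces the PGST times for $C_{2^k}$ inside $2\pi\mathbb{Z}$, where $U_{\overline{C_n}}(t)=U_{C_n}(-t)$, so the complement comes for free.
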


\begin{thm}\cite{pal10}\label{5th25}
 A cycle $C_n$ on $n$ vertices admits pretty good pair state transfer if and only
if either $n = 2^k$ or $n = 2^kp,$ where $k$ is a positive integer and $p$ is an odd prime.
\end{thm}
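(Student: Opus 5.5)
The plan is to work directly with the spectral decomposition of $C_n$ and reduce pair PGST to a Kronecker-type condition on the eigenvalues $\lambda_l=2\cos\ob{\frac{2l\pi}{n}}$. By Remark~\ref{5r1} it suffices to treat the adjacency matrix. The case of odd $n$ is excluded by \cite[Theorem 10]{pal10}, as recalled before Lemma~\ref{5l8}: a reflection fixing a single vertex obstructs pair PGST. So throughout I assume $n$ even and, by vertex-transitivity, take the initial state to be $\u=\frac{1}{\sqrt{2}}\ob{\e_0-\e_b}$.

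\textbf{Step 1: determine the admissible target.} I would first pin down the possible target $\v=\frac{1}{\sqrt{2}}\ob{\e_c-\e_d}$ using the automorphism technique of Theorem~\ref{5l7} and Lemmas~\ref{5l9}, \ref{5l10}. The relevant automorphisms are the reflections $j\mapsto -j$ and $j\mapsto 2b-j$, together with the rotation $j\mapsto j+\frac{n}{2}$. Comparing norms of differences of limits, as in \eqref{5eqn8}, should show that
\begin{itemize}[label={}]
\item $\v$ is (up to sign) the image of $\u$ under the rotation by $\frac{n}{2}$, namely $\v=\frac{1}{\sqrt{2}}\ob{\e_{n/2}-\e_{b+n/2}}$;
\item the antipodal choice $b=\frac{n}{2}$ is impossible, by \cite[Lemma 1]{pal10}.
\end{itemize}

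\textbf{Step 2: translate PGST into sign conditions.} Since the eigenspaces are spanned by $\x_l,\x_{n-l}$, PGST from $\u$ to $\v$ is equivalent to the following: there are times $t_k$ and a unimodular $\gamma$ with $e^{it_k\lambda_l}\to\gamma\sigma_l$ for every $l$ in the eigenvalue support of $\u$. Here $\sigma_l=\pm1$ is determined by comparing $E_{\lambda_l}\v$ with $E_{\lambda_l}\u$. For the rotation target, $\x_l$ picks up the factor $\omega_n^{l n/2}=(-1)^l$, so $\sigma_l=(-1)^l$. The support of $\u$ is $\{l: \omega_n^{lb}\neq 1\}$.

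By Kronecker's theorem (as used in \cite{pal4,pal10}), PGST holds if and only if every integer relation $\sum m_l\lambda_l=0$ among support eigenvalues, taken together with the absorption of $\gamma$, forces $\sum_{l\ \mathrm{odd}} m_l$ to be even.

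\textbf{Step 3: analyse the rational linear relations among $\cos\ob{\frac{2\pi l}{n}}$.} These are governed by the structure of $\mathbb{Q}(\omega_n)^+$: the relations come from sums of $\omega_n^{l}$ over cosets of subgroups of prime order $p\mid n$. I treat the three shapes of $n$ separately.
\begin{itemize}[label={}]
\item If $n=2^k$, the cosines with odd $l$ are linearly independent over $\mathbb{Q}$ modulo the trivial symmetries, and PGST follows as in Theorem~\ref{5t24}.
\item If $n=2^kp$, I would choose $b$ so that the support avoids the problematic cosets, for instance $b=2^{k}$ or $b=p$, so that the support is controlled by the $p$-part. Then the only relations are sums over full $p$-cosets. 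Each such coset contains an odd number $p$ of terms of a single parity class of $l$, and so satisfies the parity requirement.
\item If $n$ has two distinct odd prime factors, or $p^2\mid n$, I would exhibit for every $b$ an explicit relation violating parity. The candidate is an inclusion–exclusion combination of coset sums over subgroups of orders $p$ and $q$ (respectively $p$ and $p^2$), whose odd-$l$ coefficient sum is odd.
\end{itemize}

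The main obstacle is Step 3 in the non-prime-power odd-part case. The relation must be constructed uniformly in $b$, while the support $\{l:\omega_n^{lb}\neq1\}$ changes with $b$. I would first try relations supported only on $l$ coprime to $n$, which lie in every support since $b\not\equiv 0$. For these I would use the Ramanujan-sum identity $\sum_{\gcd(l,n)=1}\omega_n^{l}=\mu(n)$ restricted to suitable subsets, to produce an odd combination summing to zero.
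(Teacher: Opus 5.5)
The paper gives no proof of this statement; it quotes it from \cite{pal10}. Your outline therefore has to stand on its own.

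Steps 1--2 are sound in outline, with two small repairs.
\begin{itemize}
\item For $b=\pm\frac{n}{4}$ the norm comparisons do not exclude the targets $\frac{1}{\sqrt2}(\e_0-\e_{b+n/2})$ and $\frac{1}{\sqrt2}(\e_{n/2}-\e_b)$. You need $E_{\lambda_l}\v=\pm E_{\lambda_l}\u$ for odd $l$ to discard them.
\item For odd $n$, it is cleaner to argue directly than to cite \cite[Theorem 10]{pal10}: the reflections fixing $a$ and fixing $b$ force $\{c,d\}=\{a,b\}$.
\end{itemize}
The genuine gaps are in Step~3, in both directions.

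\textbf{Sufficiency for $n=2^kp$.} Your parity reasoning is backwards. Since $\frac{n}{p}$ is even, each coset $j+\frac{n}{p}\mathbb{Z}$ consists of $p$ indices of a single parity, and $\sum_{r=0}^{p-1}\lambda_{j+rn/p}=0$. If the support contains the full cosets of $1$ and of $2$, subtracting the two relations gives $\sum m_l=0$ and $\sum_{l\ \mathrm{odd}}m_l=p$, which is odd. This is exactly the obstruction \eqref{5e25} used in Lemma~\ref{5l11}. So full $p$-cosets inside the support destroy PGST; they do not ``satisfy the parity requirement''.

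In particular $b=p$ fails. For $k\ge2$ its support $\{l:2^k\nmid l\}$ contains both cosets, and for $k=1$ it is antipodal. Your own sentence (the support avoids the cosets, yet the only relations are coset sums) is also self-contradictory.

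What works is $\gcd(b,n)=2^k$, e.g.\ $b=2^k$, whose support $\{l:p\nmid l\}$ contains no full $p$-coset. The step you must then prove is that nothing else survives:
\begin{itemize}
\item $\{\omega_n^l:p\nmid l\}$ is exactly the set of $\pm\omega_{2^k}^a\omega_p^c$ with $0\le a<2^{k-1}$ and $1\le c\le p-1$, which up to sign is a $\mathbb{Q}$-basis of $\mathbb{Q}(\omega_n)$.
\item Hence all integer relations among the support eigenvalues are generated by $\lambda_l+\lambda_{l+n/2}=0$.
\item These generators pass the parity test. For $k\ge2$, $l$ and $l+\frac{n}{2}$ have the same parity. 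For $k=1$, each generator has coefficient sum $2$ and exactly one odd index, so $\sum m_l=0$ forces $\sum_{l\ \mathrm{odd}}m_l=0$.
\end{itemize}
Kronecker's theorem then gives PGST from $\frac{1}{\sqrt2}(\e_0-\e_{2^k})$ to $\frac{1}{\sqrt2}(\e_{n/2}-\e_{2^k+n/2})$.

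\textbf{Necessity when the odd part of $n$ is not prime.} You only name a candidate relation, and your fallback cannot work. Since $n$ is even, every $l$ coprime to $n$ is odd. So any relation supported on units with $\sum m_l=0$ has $\sum_{l\ \mathrm{odd}}m_l=\sum m_l=0$; Ramanujan-sum relations can never violate parity.

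The missing idea is to control the support through $d=n/\gcd(b,n)$.
\begin{itemize}
\item The support is $\{l:d\nmid l\}$, and $d\ge3$ because $b\not\equiv0$ and $b$ is not antipodal.
\item The coset $j+\frac{n}{p}\mathbb{Z}$ contains no multiple of $d$ if and only if $\gcd(\frac{n}{p},d)\nmid j$.
\item Hence whenever $\gcd(\frac{n}{p},d)\ge3$ for some odd prime $p\mid n$, the cosets of $1$ and $2$ lie in the support. This gives $1=\lim_{k}\exp(it_kW)=-1$ as in Lemma~\ref{5l11}, uniformly in $b$.
\item If $p$ divides $d$ fewer times than it divides $n$, then $d\mid\frac{n}{p}$ and the gcd is $d\ge3$. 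So $\gcd(\frac{n}{p},d)\le2$ forces $p^2\nmid n$ and $d\in\{p,2p\}$.
\end{itemize}
That last condition cannot hold for two distinct odd primes at once, and cannot hold when $p^2\mid n$. This completes necessity.
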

We obtain the following as an immediate consequence of Theorem \ref{5t24}.
\begin{lem}\label{5t26}
Let $C_n$ be a cycle with two vertices $a$ and
$b,$ where $n=2^k,$ with $k\geq 2.$ Pretty good plus state transfer occurs in $C_n$ and its complement $\overline{C_n}$ between $\frac{1}{\sqrt{2}}\ob{\e_a+\e_b}$ and $\frac{1}{\sqrt{2}}\ob{\e_{a+\frac{n}{2}}+\e_{b+\frac{n}{2}}}$ if and only
if $a-b\neq \frac{n}{2}.$
\end{lem}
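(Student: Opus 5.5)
The plan is to derive the plus state transfer directly from the vertex PGST of Theorem \ref{5t24} by linearity, using the fact that the transition matrix of a circulant graph is itself circulant. Both $C_n$ and $\overline{C_n}$ are circulant graphs over $\mathbb{Z}_n$, and they are regular, so by Remark \ref{5r1} it suffices to work with the adjacency matrix. Hence the argument below applies verbatim to both graphs, and I will write it once for a circulant graph $X\in\{C_n,\overline{C_n}\}$ with $n=2^k$, $k\geq 2$.

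\emph{Sufficiency.} Assume $a-b\neq \frac{n}{2}$. By Theorem \ref{5t24}, $X$ admits PGST between antipodal vertices. In particular there are a sequence $t_k$ and a unimodular $\gamma$ with $\lim_{k\to\infty}U(t_k)\e_0=\gamma\e_{\frac{n}{2}}$.

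Let $T$ be the permutation matrix of the translation $j\mapsto j+1$. This is an automorphism of $X$, so $T$ commutes with $U(t)$ for all $t$. Applying $T^a$ and $T^b$ gives
\[
\lim_{k\to\infty}U(t_k)\e_a=\gamma\e_{a+\frac{n}{2}}, \qquad \lim_{k\to\infty}U(t_k)\e_b=\gamma\e_{b+\frac{n}{2}}.
\]
The key point is that these two limits use the same sequence $t_k$ and the same phase $\gamma$. Adding them and dividing by $\sqrt{2}$ yields
\[
\lim_{k\to\infty}U(t_k)\tfrac{1}{\sqrt{2}}\ob{\e_a+\e_b}=\gamma\tfrac{1}{\sqrt{2}}\ob{\e_{a+\frac{n}{2}}+\e_{b+\frac{n}{2}}}.
\]
It remains to check that the two plus states are linearly independent, as the definition of PGST requires. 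Since $a\neq b$ and $a-b\neq\frac{n}{2}$, the sets $\{a,b\}$ and $\{a+\frac{n}{2},b+\frac{n}{2}\}$ are different. Two distinct $2$-subsets give linearly independent characteristic sums, so the condition holds.

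\emph{Necessity.} If $a-b=\frac{n}{2}$, then $\{a+\frac{n}{2},b+\frac{n}{2}\}=\{b,a\}$. The target state therefore coincides with the initial state, so the two states are not linearly independent and PGST between them is excluded by definition.

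The only genuinely delicate step is securing a common time sequence and a common phase for the two vertex transfers. A naive appeal to Theorem \ref{5t24} separately for the pairs $(a,a+\frac{n}{2})$ and $(b,b+\frac{n}{2})$ could produce different sequences, and then the limits could not be added. The translation-commutation argument above removes this issue. As a consistency check, the conclusion also agrees with Lemma \ref{5l9} and Lemma \ref{5l10}, which force the target to be exactly this state in the non-antipodal case.
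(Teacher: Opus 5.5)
Your proposal is correct and follows essentially the paper's route: the paper simply declares the lemma an immediate consequence of Theorem~\ref{5t24}, and you make that explicit by summing the two vertex transfers. The translation-automorphism step, which gives both transfers a common time sequence and phase, and the observation that the antipodal case is excluded because the target state coincides with the initial one, are the details the paper leaves implicit.
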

An even cycle $C_n$ can be realized as a double cover of $C_{\frac{n}{2}}$ as demonstrated in Figure \ref{5fig3}. Let $a$ and $b$ be antipodal vertices in $C_n.$ Corollary \ref{5cor3}(1) shows that a cycle $C_n$ admits PGST between $a$ and $b$ if and only if $C_{2n}$ admits PGST between $\frac{1}{\sqrt{2}}\ob{\e_{(0,a)}+\e_{(1,a)}}$ and $\frac{1}{\sqrt{2}}\ob{\e_{(0,b)}+\e_{(1,b)}}.$ Here, the vertices $(0,a)$ and $(1,a)$ are antipodal vertices in $C_{2n}.$ Including the cycle $C_4$ that admits plus PST between $\frac{1}{\sqrt{2}}\ob{\e_0+\e_2}$ and $\frac{1}{\sqrt{2}}\ob{\e_1+\e_3}$ at $\frac{\pi}{4}$ as shown in \cite[Theorem 6.5]{kim}, and using Lemma \ref{5l10} and Theorem \ref{5t24}, we observe the following.

\begin{lem}\label{5lemma5}
    Let $a$ and $b$ be antipodal vertices in $C_n.$ The cycle $C_n$ admits PGST between $\frac{1}{\sqrt{2}}\ob{\e_a+\e_b}$ and $\frac{1}{\sqrt{2}}\ob{\e_{a+\frac{n}{4}}+\e_{b+\frac{n}{4}}}$ if and only if $n=2^k$ with $k\geq 2.$ 
\end{lem}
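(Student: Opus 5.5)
The plan is to realize $C_n$ (with $n$ even) as the double cover $X_1\ltimes X_2$ of $C_{n/2}$, exactly as in Figure~\ref{5fig3}, and then apply Corollary~\ref{5cor3}(1). Label the vertices of $C_n$ as $\{0,1\}\times\mathbb{Z}_{n/2}$. Take $X_1$ to be the path $0-1-\cdots-(n/2-1)$ and $X_2$ the single edge $\{n/2-1,0\}$. Then $G_+=C_{n/2}$, and the antipodal pair $a, a+\frac n2$ in $C_n$ corresponds to $(0,a'),(1,a')$ for a vertex $a'$ of $C_{n/2}$. Likewise the pair $a+\frac n4, b+\frac n4$ corresponds to $(0,a''),(1,a'')$, where $a''=a'+\frac n4$ in $\mathbb{Z}_{n/2}$ (this needs $4\mid n$). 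Thus $a''$ is antipodal to $a'$ in $C_{n/2}$.

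Corollary~\ref{5cor3}(1) is stated for fractional revival at a fixed time. I first extend it to limits. By Theorem~\ref{5t19},
\[
U_{M(X_1\ltimes X_2)}(t)\,\tfrac{1}{\sqrt2}\tb{\e_{a'}^T\quad \e_{a'}^T}^T=\tfrac{1}{\sqrt2}\tb{(U_{M(G_+)}(t)\e_{a'})^T\quad (U_{M(G_+)}(t)\e_{a'})^T}^T
\]
for every $t$. Hence $\lim_k U(t_k)$ applied to the plus state equals $\gamma$ times the target plus state if and only if $\lim_k U_{C_{n/2}}(t_k)\e_{a'}=\gamma\e_{a''}$. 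Therefore $C_n$ has PGST between the two plus states if and only if $C_{n/2}$ has vertex PGST between the antipodal vertices $a'$ and $a''$.

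For sufficiency, let $n=2^k$. If $k\ge3$, then $n/2=2^{k-1}$ with $k-1\ge2$, so Theorem~\ref{5t24} gives vertex PGST between antipodal vertices of $C_{n/2}$, and the equivalence above transfers it to $C_n$. If $k=2$, the degenerate base $C_2$ is not covered by Theorem~\ref{5t24}, so I instead invoke the plus PST in $C_4$ at $\frac{\pi}{4}$ from \cite[Theorem 6.5]{kim}; PST is in particular PGST.

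For necessity, suppose PGST occurs between the stated plus states. Lemma~\ref{5l10} already forces $4\mid n$. If $n=4$ we are done. Otherwise $n/2\ge4$ is even, and the equivalence gives vertex PGST in $C_{n/2}$, so Theorem~\ref{5t24} forces $n/2=2^{k-1}$ with $k-1\ge2$, hence $n=2^k$. The main obstacle is bookkeeping: checking that the labeling of $C_n$ as $X_1\ltimes X_2$ sends $a+\frac n4$ to the pair $(0,a''),(1,a'')$ with $a''$ antipodal to $a'$. In the labeling of Figure~\ref{5fig3}, vertex $(s,j)$ sits at position $j+s\frac n2$ along the cycle, so adding $\frac n4$ shifts $j$ by $\frac n4$, which is half of $\frac n2$. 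This gives exactly the antipodal vertex in $C_{n/2}$.
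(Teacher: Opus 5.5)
Your proposal is correct and follows essentially the same route as the paper. The paper also realizes $C_n$ as the double cover of $C_{n/2}$, uses Corollary~\ref{5cor3}(1) to reduce plus PGST from an antipodal pair to vertex PGST between antipodal vertices of $C_{n/2}$, treats $C_4$ separately via the plus PST of \cite[Theorem 6.5]{kim}, and gets necessity from Lemma~\ref{5l10} and Theorem~\ref{5t24}; your derivation of the limit version from Theorem~\ref{5t19} and your check that $a+\frac{n}{4}$ corresponds to the antipodal vertex $a'+\frac{n}{4}$ of $C_{n/2}$ fill in details the paper leaves implicit.
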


Now we investigate plus PGST in $C_n,$ whenever $n$ has an odd prime factor.  
\begin{lem}\label{5l11}
   A cycle $C_n$ does not admit pretty good plus state transfer 
   whenever $n=mp,$ where $m$ is a positive integer and $p$ is an odd prime.  
\end{lem}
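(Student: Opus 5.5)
We split by the parity and residue of $n$ and use the lemmas already proved to reduce to a few explicit candidate targets, which we then rule out spectrally. If $n$ is odd, Lemma~\ref{5l8} already excludes plus PGST, so we may assume $n$ is even and has an odd prime factor $p$. By translation we may take $a=0$. If $b\neq \frac{n}{2}$, Lemma~\ref{5l9} says the only possible target is $\frac{1}{\sqrt{2}}\ob{\e_{\frac{n}{2}}+\e_{b+\frac{n}{2}}}$. If $b=\frac{n}{2}$, Lemma~\ref{5l10} forces $4\mid n$ and the target $\frac{1}{\sqrt{2}}\ob{\e_{\frac{n}{4}}+\e_{\frac{3n}{4}}}$. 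So it suffices to show that neither of these two specific transfers can occur when $p\mid n$.

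For each case we project the limit relation $\lim_k U(t_k)\ob{\e_0+\e_b}=\gamma\ob{\e_c+\e_d}$ onto the eigenvectors $\x_l$ of \eqref{5eq7}. In the non-antipodal case, $\x_l^*\ob{\e_{\frac{n}{2}}+\e_{b+\frac{n}{2}}}=(-1)^l\,\x_l^*\ob{\e_0+\e_b}$. This gives
\[
\lim_{k\to\infty}\exp\ob{it_k\lambda_l}=\gamma(-1)^l\quad\text{for every } l \text{ with } 1+\omega_n^{lb}\neq 0 .
\]
In the antipodal case, $\x_l^*\ob{\e_0+\e_{\frac{n}{2}}}=1+(-1)^l$, so only even $l$ lie in the support. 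Writing $l=2r$, the factor becomes $\omega_n^{-2r\frac{n}{4}}=(-1)^r$, and we get the analogous condition $\exp(it_k\lambda_{2r})\to\gamma(-1)^r$ for all $r$.

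To reach a contradiction we use integer linear relations among cycle eigenvalues coming from the prime $p$. The basic one is $\sum_{j=0}^{p-1}\lambda_{jn/p}=2\sum_{j}\cos(2\pi j/p)=0$. This yields $\prod_j \exp(it_k\lambda_{jn/p})=1$ for all $k$, hence $\gamma^p(-1)^{\frac{n}{p}\binom{p}{2}}=1$, i.e.\ $\gamma^p=1$. A second relation is $\lambda_{l+\frac{n}{2}}=-\lambda_l$; applied to $l=0$ it gives $\gamma^2=(-1)^{\frac{n}{2}}$. If $\frac{n}{2}$ is odd, these two facts give $\gamma^{2p}=-1$ and $\gamma^{2p}=1$ simultaneously, which is a contradiction. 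If $4\mid n$, they force $\gamma=1$, and we need a further relation. For that we will use the shifted family $l_j=\frac{n}{2p}(2j+1)$, for which $\sum_j\lambda_{l_j}=2\sum_j\cos(\pi(2j+1)/p)=0$. If necessary we will also use mixed relations such as $\lambda_{jn/p}+\lambda_{jn/p+\frac{n}{2}}=0$ for an index $j$ with $\frac{jn}{p}$ odd. The goal is a sign mismatch between $\prod(-1)^{l_j}$ and $\gamma^{\#}$. The same relations, rewritten with the $(-1)^r$ phases, will be used in the antipodal case.

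The main obstacle is bookkeeping the eigenvalue support. Every index used in a relation must satisfy $1+\omega_n^{lb}\neq0$ (or be even, in the antipodal case), otherwise the corresponding limit is not constrained. Moreover, choosing relations whose parity pattern actually clashes depends on the $2$-adic valuation of $n$ and on $b$. I would first try the relations above with $n$ written as $2^s m$, $m$ odd. When some needed $l$ falls outside the support (this happens exactly when $\frac{lb}{n}\equiv\frac12 \pmod 1$), I would replace it by $l\pm\frac{n}{p}$ or by its partner $n-l$; these have the same or a related eigenvalue. Where helpful, I would also invoke the pair-PGST characterization, Theorem~\ref{5th25}, via the subtraction trick of Theorem~\ref{5l7}, to eliminate residual subcases with $n=2^kp$.
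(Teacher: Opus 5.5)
\textbf{There is a genuine gap, and the statement itself is false.} The gap is the support bookkeeping you postponed. It cannot be completed for every $n$ with $4\mid n$, because the lemma fails there.

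\emph{Counterexample.} Take $n=12$ (so $p=3$) and $b=2$.
\begin{enumerate}
\item $1+\omega_{12}^{-2l}=0$ exactly for $l\in\{3,9\}$. So the eigenvalue $0=\lambda_3=\lambda_9$ is not in the eigenvalue support of $\e_0+\e_2$.
\item The remaining conditions $e^{it_k\lambda_l}\to(-1)^l\gamma$ only ask for the limit $\gamma$ on $\pm2,\pm1$ (even $l$), and for $-\gamma$ on $\pm\sqrt3$ ($l\in\{1,5,7,11\}$).
\item Take $\gamma=1$ and $t_k=2\pi N_k$ with $N_k\in\mathbb{Z}$ and $N_k\sqrt3\bmod 1\to\frac12$. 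This is possible since $\sqrt3\notin\mathbb{Q}$, and all the conditions above then hold.
\item Hence $U(t_k)(\e_0+\e_2)\to\sum_l(-1)^lE_{\lambda_l}(\e_0+\e_2)=P(\e_0+\e_2)=\e_6+\e_8$. Here $E_{\lambda_l}=\frac1n\x_l\x_l^*$ and $P\e_j=\e_{j+6}$, as in the paper.
\end{enumerate}
So $C_{12}$ has pretty good plus state transfer between $\frac{1}{\sqrt2}(\e_0+\e_2)$ and $\frac{1}{\sqrt2}(\e_6+\e_8)$. The same happens for $n=24$, $b=4$.

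\emph{The paper's proof.} It has the same oversight. It asserts $\lim_k e^{it_k\lambda_l}=(-1)^l\gamma$ for every $l$, but this only follows when $E_{\lambda_l}(\e_0+\e_b)\neq\mathbf{0}$. For $n=12$, its identity \eqref{5e25} reads $(\lambda_2-\lambda_1)+(\lambda_6-\lambda_5)+(\lambda_2-\lambda_3)=0$, which uses the unsupported $\lambda_3$.

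\textbf{Your relations for $4\mid n$.} Your reduction and projection steps essentially match the paper's. However, the relations you list would not finish the case $4\mid n$ even where the claim is true.
\begin{enumerate}
\item Once $\gamma=1$, the family $l_j=\frac{n}{2p}(2j+1)$ has phase product $\gamma^p(-1)^{np/2}=\gamma^p$, so it gives nothing.
\item The ``mixed'' relation is vacuous, since $jn/p$ is always even. Any relation $\lambda_l+\lambda_{l+\frac n2}=0$ only reproduces $\gamma^2=(-1)^{\frac n2}$.
\item What works is $\sum_{j=0}^{p-1}\lambda_{jm+1}=0$ with $m=n/p$. Its phase product is $-\gamma^p$, contradicting $\gamma^p=1$. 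This is what \eqref{5e25} amounts to, since its left side equals $\sum_j(\lambda_{jm+2}-\lambda_{jm+1})$.
\end{enumerate}
That relation requires every index $jm+1$ to lie in the support. If $p\nmid b$, the family $\{jm+s\}_j$ contains some $l$ with $\omega_n^{lb}=-1$ if and only if $sb\equiv\frac m2\pmod m$. So when $b$ is an odd multiple of $\frac m2$, every odd shift $s$ fails. Your substitutions $l\mapsto l\pm m$ and $l\mapsto n-l$ only permute these families. With $4\mid n$, this is the regime of the counterexample.

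A smaller point: $l=\frac n2$ is unsupported when $b$ is odd. Derive $\gamma^2=(-1)^{\frac n2}$ from $l=1$ and $l=1+\frac n2$ instead.

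\textbf{What survives.} With these repairs your approach proves the claim in three cases:
\begin{enumerate}
\item $n\equiv2\pmod 4$;
\item $4\mid n$, whenever some odd prime $p\mid n$ has $p\mid b$ or $b\not\equiv\frac m2\pmod m$;
\item the antipodal case, via the shift-$2$ family or Lemma~\ref{5lemma5}.
\end{enumerate}
It does not prove the claim in general.
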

\begin{proof}
If $m$ is odd, then the result follows from Lemma \ref{5l8}. Suppose that $m$ is even. Then Lemma \ref{5t26}
 and Lemma \ref{5lemma5} implies that if $C_n$ admits plus PGST from $\frac{1}{\sqrt{2}}\ob{\e_a+\e_b},$ then the vertices $a$ and $b$ must be non-antipodal, and PGST occurs between $\frac{1}{\sqrt{2}}\ob{\e_a+\e_b}$ and $\frac{1}{\sqrt{2}}\ob{\e_{a+\frac{n}{2}}+\e_{b+\frac{n}{2}}}.$ 
 Without loss of generaity let $a=0.$ The spectral decomposition of the adjacency matrix of $C_n$ gives $A(C_n)=\sum_{l=0}^{n-1}\lambda_l E_{\lambda_l}$  where $\lambda_l=2\cos{(\frac{2l\pi}{n})},$ $E_{\lambda_l}=\frac{1}{n}\x_l\x_l^*,$ and $\x_l(j)=\omega_n^{lj}$ for $0\leq j\leq n-1$ as given in \eqref{5eq7}. Consider the automorphism of $C_n$ with permutation matrix $P$ satisfying $P\e_j=\e_{j+\frac{n}{2}}.$
  Therefore, we have
  \[(P\x_l)(j)=\x_l\ob{j+\frac{n}{2}}=\omega_n^{l\ob{j+\frac{n}{2}}}=(-1)^l\x_l(j).\]
  This shows that $P\x_l=(-1)^l \x_l,$ and consequently, $PE_{\lambda_l}=(-1)^lE_{\lambda_l}.$
    Since $P^2=I,$ it follows that $P^T=P$ and $PE_{\lambda_l}=E_{\lambda_l}P.$ Hence,
    \begin{equation}\label{5eq22}
        E_{\lambda_l}\ob{\e_0+\e_b}=PE_{\lambda_l}\ob{e_{\frac{n}{2}}+\e_{b+\frac{n}{2}}}=(-1)^lE_{\lambda_l}\ob{e_{\frac{n}{2}}+\e_{b+\frac{n}{2}}}.
         \end{equation}
In the proof of \cite[Lemma 11]{pal4}, we observe that for an odd prime $p,$ the eigenvalues of $C_n$ satisfy the following identity.
         
  \begin{equation}\label{5e25}
    (\lambda_2-\lambda_1)+\sum_{r=1}^\frac{p-1}{2}(\lambda_{mr+2}-\lambda_{mr+1})+ \sum_{r=1}^\frac{p-1}{2}(\lambda_{mr-2}-\lambda_{mr-1})=0.
 \end{equation}
 Suppose $C_n$ admits plus PGST between $\frac{1}{\sqrt{2}}\ob{\e_0+\e_b}$ and $\frac{1}{\sqrt{2}}\ob{\e_{\frac{n}{2}}+\e_{b+\frac{n}{2}}},$ then there exists a sequence $t_k\in\mathbb{R}$ and a complex number $\gamma$ of unit modulus such that 
\begin{equation}\label{5eq27}
      \gamma \sum_{l=0}^{n-1}E_{\lambda_l}\ob{\e_{\frac{n}{2}}+\e_{b+\frac{n}{2}}}=\gamma\ob{\e_{\frac{n}{2}}+\e_{b+\frac{n}{2}}}=\lim_{k \to \infty}\sum_{l=0}^{n-1}\exp{(it_k\lambda_l)}E_{\lambda_l}\ob{\e_0+\e_b}.
   \end{equation}
 Multiplying both sides of \eqref{5eq27} by $E_{\lambda_l}$ and using \eqref{5eq22}, we obtain  $\displaystyle\lim_{k \to \infty}\exp{(it_k\lambda_l)}=(-1)^l\gamma.$ 
 Therefore, 
$ \displaystyle\lim_{k \to \infty}\exp{(it_k(\lambda_{l+1}-\lambda_l))}=-1. $ Denoting the term on the left-hand side of \eqref{5e25} as $W,$ we obtain  
  $\displaystyle\lim_{k \to \infty}\exp{\ob{it_kW}}=-1.$
  This contradicts \eqref{5e25} as $W=0,$ implying no plus PGST between $\frac{1}{\sqrt{2}}\ob{\e_0+\e_b}$ and $\frac{1}{\sqrt{2}}\ob{\e_{\frac{n}{2}}+\e_{b+\frac{n}{2}}}$ in $C_n$.
\end{proof}
Combining Lemma \ref{5t26}, Lemma \ref{5lemma5} and Lemma \ref{5l11}
together with the case of $C_4$ and $C_8$ that exhibit plus PST, we obtain the following result on plus PGST in cycles.
\begin{thm}\label{5thm9}
    A cycle $C_n$ admits pretty good plus state transfer if and only if $n=2^k$ with $k\geq 2.$ Moreover, every plus state in $C_{2^k}$ exhibits pretty good plus state transfer. 
\end{thm}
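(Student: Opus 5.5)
The plan is to split on the prime factorization of $n$ and assemble the earlier lemmas, treating necessity and sufficiency separately.

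\emph{Necessity.} Suppose $C_n$ admits plus PGST. If $n$ is odd, Lemma \ref{5l8} already rules this out, since $C_n$ is a circulant graph. If $n$ is even and has an odd prime divisor $p$, write $n=mp$ and apply Lemma \ref{5l11}, which again gives a contradiction. The remaining values are $n=2^k$. For $k=1$, $C_2$ is not a simple cycle, and the only plus state on two vertices is $\frac{1}{\sqrt{2}}(\e_0+\e_1)$. That state is an eigenvector, so it is fixed and cannot transfer to a linearly independent state. Hence $n=2^k$ with $k\geq 2$.

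\emph{Sufficiency.} Let $n=2^k$ with $k\geq 2$, and take an arbitrary plus state $\frac{1}{\sqrt{2}}(\e_a+\e_b)$ with $a\neq b$. There are two cases.
\begin{enumerate}
\item If $a-b\neq\frac{n}{2}$, Lemma \ref{5t26} (which rests on vertex PGST between antipodal vertices, Theorem \ref{5t24}) gives plus PGST to $\frac{1}{\sqrt{2}}(\e_{a+\frac{n}{2}}+\e_{b+\frac{n}{2}})$. This target is linearly independent of the initial state exactly because $\{a,b\}$ is not invariant under the shift by $\frac{n}{2}$.
\item If $a-b=\frac{n}{2}$, Lemma \ref{5lemma5} gives PGST to $\frac{1}{\sqrt{2}}(\e_{a+\frac{n}{4}}+\e_{b+\frac{n}{4}})$. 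This is a different plus state, because $\frac{n}{4}\not\equiv 0,\frac{n}{2}$ modulo $n$.
\end{enumerate}
Together these show that every plus state in $C_{2^k}$ exhibits plus PGST, which is the ``moreover'' part.

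The main point to check is that the small cases are covered by the lemmas. For $n=4$ the antipodal case is exactly the plus PST of \cite[Theorem 6.5]{kim}, already included in Lemma \ref{5lemma5}. For $n=8$ the lemmas also apply, and the paper's remark on plus PST in $C_4$ and $C_8$ only strengthens PGST to PST there. I also need Lemma \ref{5t26} to apply with $k=2$, where the non-antipodal pairs are adjacent vertices; Theorem \ref{5t24} includes $k=2$, so this holds.

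I expect the hardest step to be the non-antipodal case of sufficiency, where vertex PGST must be turned into plus PGST. The route I would take is to use the shift automorphism $P\e_j=\e_{j+b-a}$, which commutes with $U(t)$. If $U(t_k)\e_a\to\gamma\e_{a+\frac{n}{2}}$, then applying $P$ gives $U(t_k)\e_b\to\gamma\e_{b+\frac{n}{2}}$ along the same sequence with the same phase, and adding the two limits yields the plus PGST. Since this is precisely the content of Lemma \ref{5t26}, the theorem follows by combining the cases above.
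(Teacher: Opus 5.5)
Your proposal is correct and takes the same route as the paper. The paper's proof simply combines Lemma \ref{5l11} (which already covers odd $n$ through Lemma \ref{5l8}) for necessity with Lemmas \ref{5t26} and \ref{5lemma5} for the non-antipodal and antipodal plus states, and cites $C_4$ and $C_8$ as plus PST instances; your translation-automorphism argument is exactly the reasoning behind Lemma \ref{5t26}.
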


Next, we investigate plus PGST in $\overline{C_n}$ whenever $n$ has an odd prime factor.

\begin{lem}\label{5lemm6}
There is no pretty good plus state transfer in the complement $\overline{C_n},$ whenever $n=mp,$ where $m$ is a positive integer and $p$ is an odd prime.
\end{lem}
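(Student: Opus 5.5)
We follow the proof of Lemma \ref{5l11}, replacing the eigenvalues $\lambda_l$ of $C_n$ by the eigenvalues $\mu_l$ of $\overline{C_n}$. If $m$ is odd, then $n$ is odd and Lemma \ref{5l8} applies directly, since $\overline{C_n}$ is a circulant graph. So assume $m$ is even.

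The first step is to pin down the only possible target state. Lemma \ref{5l9} and Lemma \ref{5l10} hold for every even-order circulant graph, so they apply to $\overline{C_n}$. If the initial vertices $a,b$ are antipodal, Lemma \ref{5l10} gives $4\mid n$ and forces the target $\frac{1}{\sqrt{2}}\ob{\e_{a+\frac{n}{4}}+\e_{b+\frac{n}{4}}}$. If $a,b$ are non-antipodal, Lemma \ref{5l9} forces the target $\frac{1}{\sqrt{2}}\ob{\e_{a+\frac{n}{2}}+\e_{b+\frac{n}{2}}}$.

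\emph{Non-antipodal case.} Take $a=0$ and use the automorphism $P\e_j=\e_{j+\frac{n}{2}}$. Since $\overline{C_n}$ has the same eigenvectors $\x_l$, we again get $PE_{\mu_l}=(-1)^lE_{\mu_l}$, where $E_{\mu_l}=\frac1n\x_l\x_l^*$ is the per-index projection. Repeating \eqref{5eq22}–\eqref{5eq27} yields $\lim_k \exp(it_k\mu_l)=(-1)^l\gamma$ for every $l$ with $E_{\mu_l}(\e_0+\e_b)\neq 0$. Repeated eigenvalues cause no problem, because equal eigenvalues correspond to indices $l$ and $n-l$, which have the same parity. For $l\geq1$ we have $\mu_{l+1}-\mu_l=-(\lambda_{l+1}-\lambda_l)$. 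The combination $W$ in \eqref{5e25} uses only indices in $\{1,\dots,n-1\}$ (assuming $m\geq 3$), so the corresponding sum of $\mu$-differences is $-W=0$. On the other hand, its limit is $\exp$ of a sum of terms each tending to $-1$, an odd number $2p-1$ of them, so the limit is $-1$. This is a contradiction.

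\emph{Antipodal case.} Here $4\mid n$. Take $a=0$, $b=\frac n2$ and target $\e_{\frac n4}+\e_{\frac{3n}{4}}$. We have $E_{\mu_l}(\e_0+\e_{\frac n2})\ne0$ only for even $l$. The shift by $\frac n4$ acts on $\x_l$ by $i^l$, which is $(-1)^{l/2}$ for even $l$. Hence $\lim_k\exp(it_k\mu_{2j})=(-1)^j\gamma$. We then need a vanishing combination of differences $\mu_{2j+2}-\mu_{2j}$ with an odd number of terms and indices avoiding $0$. The natural approach is to apply the identity of \eqref{5e25} to $C_{n/2}$, whose eigenvalues are exactly the $\lambda_{2j}$. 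This works when $n/2=m'p$ with $m'\geq3$. The small cases must be handled separately, either directly or by checking that the index $0$ never enters.

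\emph{Main obstacle.} The hard part is the bookkeeping around the index $l=0$, where $\mu_0=n-3$ breaks the relation $\mu_l=-1-\lambda_l$. We must ensure that every index used in \eqref{5e25} lies in $\{1,\dots,n-1\}$, and that the eigenvalue support contains all needed indices; the latter requires $E_{\mu_l}(\e_0+\e_b)\ne0$, i.e.\ $\omega_n^{lb}\ne-1$. If some index in \eqref{5e25} fails the support condition, we would replace \eqref{5e25} by an analogous vanishing identity, built from sums of cosines over a coset of the order-$p$ roots of unity, shifted to avoid the bad indices.
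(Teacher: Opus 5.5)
Your outline follows the paper's route: Lemma \ref{5l8} for odd $m$; Lemmas \ref{5l9} and \ref{5l10} to fix the target; the translation automorphism to get $\lim_k e^{it_k\mu_l}$; and a vanishing integer combination of eigenvalue differences with an odd number of terms. \textbf{However, it leaves two cases open, and your fallback cannot close them.}

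\emph{The case $m=2$.} In the non-antipodal case you assume $m\ge 3$ and never return to $m=2$, i.e.\ $n=2p$. Here \eqref{5e25} contains $\lambda_{m-2}=\lambda_0$. Since $\mu_0=n-3\neq -1-\lambda_0$, the corresponding $\mu$-combination equals $2p$, not $0$, so there is no contradiction. Shifted order-$p$ coset identities do not help. In $\mathbb{Z}_{2p}$ the cosets of the order-$p$ subgroup are just the even and odd residue classes. Any combination of the two coset sums whose coefficients add to $0$ (needed so the shifts $-1$ in $\mu_l=-1-\lambda_l$ cancel) is a multiple of ``even class minus odd class''. It therefore contains the index $0$.

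\emph{The case $n=4p$.} The same obstruction appears in your antipodal case. Since $4\mid n$ and $p$ is odd, $m'=m/2$ is even, so your only ``small case'' is $m'=2$, i.e.\ $n=4p$. There the identity for $C_{2p}$ again involves $\lambda'_0$, that is, $\mu_0$. So ``checking that the index $0$ never enters'' fails exactly where it is needed.

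\emph{The missing ingredient.} You need relations of a different type: $\lambda_l+\lambda_{l+\frac n2}=0$ and the explicit value $\mu_{\frac n2}=1$. This is how the paper handles $m=2$:
\begin{itemize}
\item $\mu_p=1$ gives $e^{it_k}\to-\gamma$;
\item the $\mu$-version of \eqref{5e25}, which equals $2p$, then gives $\gamma^{2p}=-1$;
\item $\mu_1+\mu_{p-1}=-2$ gives $\gamma^4=-1$, which is impossible for odd $p$.
\end{itemize}
The same device settles $n=4p$ on the even indices:
\begin{itemize}
\item $\mu_{2p}=1$ gives $e^{it_k}\to-\gamma$;
\item $\mu_2+\mu_{2p+2}=-2$ gives $\gamma^4=-1$;
\item $\sum_{j\text{ odd},\,0<j<2p}\mu_{2j}=-p$ gives $\gamma^{2p}=1$, a contradiction.
\end{itemize}

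\emph{Where you are right and the paper is not.} Your worry about the eigenvalue support is justified, and the paper does not address it.
\begin{itemize}
\item In its antipodal case the paper uses \eqref{5eq26}, whose indices $1,3,mr\pm1,mr\pm3$ are all odd. But $\x_l^*(\e_0+\e_{\frac n2})=1+(-1)^l=0$ for odd $l$. So your restriction to even indices is the correct form of that step, and your reduction to $C_{n/2}$ does settle $m\ge 8$.
\item The paper's $m=2$ argument uses the index $p$, which lies outside the support of $\e_0+\e_b$ when $b$ is odd. There you can instead use $\mu_0=2p-3$, $\mu_1+\mu_{p+1}=-2$ and $\sum_{j=1}^{p-1}\mu_{2j}=3-p$; every even index is in the support. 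These force $z^2=1$ and $z^{4p-4}=-1$ for any limit point $z$ of $e^{it_k}$.
\end{itemize}

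\emph{Your fallback is only a sketch and can fail.} Take $m=4$ and $b$ odd. A difference of two order-$p$ coset sums yields the factor $-1$ only if the two residue classes mod $4$ have opposite parity. But the even classes contain the index $0$ and the index $\frac n2$, and $\frac n2$ is outside the support. So a relation of a different kind, e.g.\ one involving $\mu_0=n-3$, is needed there too.

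\emph{Minor slip.} \eqref{5e25} consists of $p$ differences, not $2p-1$. Only the parity matters, and it is still odd.
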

\begin{proof}
If $m$ is odd, then the result follows from Lemma \ref{5l8}. Thus, assume that $m$ is even. 
The spectral decomposition of the adjacency matrix of
 $\overline{C_n}$ gives $A(\overline{C_n})=\displaystyle\sum_{l=0}^{n-1}\mu_l E_{\mu_l},$ where   
     $E_{\mu_l}=\frac{1}{n}\x_l\x_l^*,$ and $\x_l(j)=\omega_n^{lj},$ for $0\leq j\leq n-1,$ as given in \eqref{5eq7}.
\\\textit{Case I} $(a-b\neq\frac{n}{2})$:   If  $\overline{C_n}$ admits plus PGST from $\frac{1}{\sqrt{2}}\ob{\e_a+\e_b},$ where $a$ and 
$b$ are non-antipodal vertices, then by Lemma \ref{5l9}, it occurs between $\frac{1}{\sqrt{2}}\ob{\e_a+\e_b}$ and $\frac{1}{\sqrt{2}}\ob{\e_{a+\frac{n}{2}}+\e_{b+\frac{n}{2}}}.$ Without loss of generality, let $a=0$ and $0<b<\frac{n}{2}.$
 Let $P$ be the permutation matrix corresponding to the automorphism of $\overline{C_n}$ defined by $P\e_j=\e_{j+\frac{n}{2}}.$ Using a similar approach as in the proof of Lemma \ref{5l11}, we obtain 
\begin{equation}\label{5eq33}
        E_{\mu_l}(e_0+\e_b)=(-1)^l E_{\mu_l}\ob{e_{\frac{n}{2}}+\e_{b+\frac{n}{2}}}.
         \end{equation}
         For an odd prime $p,$ using \eqref{5e25} and the fact that $\lambda_0=n-\mu_0-1,$ and $\lambda_l=-1-\mu_l,$ for $1 \leq l \leq n - 1,$ we have
  \begin{equation}\label{5eqn17}
    (\mu_2-\mu_1)+\sum_{r=1}^\frac{p-1}{2}(\mu_{mr+2}-\mu_{mr+1})+ \sum_{r=1}^\frac{p-1}{2}(\mu_{mr-2}-\mu_{mr-1})=\left\{ \begin{array}{rcl}
 2p, & \mbox{if} & m =2, \\ 
 0, & \mbox{if} & m\geq 3.
 \end{array}\right.
 \end{equation}
 If $\overline{C_n}$ admits plus PGST between $\frac{1}{\sqrt{2}}\ob{\e_0+\e_b}$ and $\frac{1}{\sqrt{2}}\ob{\e_{\frac{n}{2}}+\e_{b+\frac{n}{2}}},$ then a similar argument as in the proof of Lemma \ref{5l11} gives $\displaystyle\lim_{k \to \infty}\exp{(it_k\mu_l)}=(-1)^l\gamma.$ 
 It follows that
$ \displaystyle\lim_{k \to \infty}\exp{(it_k(\mu_{l+1}-\mu_l))}=-1. $ Let $W'$ be the expression on the left-hand side of \eqref{5eqn17}. Then  
  $\displaystyle\lim_{k \to \infty}\exp{\ob{it_kW'}}=-1,$ which contradicts \eqref{5eqn17} for $m\geq 3$ as $W'=0.$

Next we consider the case $m=2.$ Since $\mu_p=1,$ it follows that $\displaystyle\lim_{k \to \infty}\exp{(it_k)}=-\gamma.$ Since $W'=2p,$ we have $\displaystyle\lim_{k \to \infty}\exp(it_k 2p)=-1,$ and consequently $\gamma^{2p}=-1.$ For the eigenvalues $\mu_1$ and $\mu_{p-1},$ we obtain $\displaystyle\lim_{k \to \infty}\exp(it_k(\mu_1+\mu_{p-1}) )=-\gamma^2.$ Since $\mu_1+\mu_{p-1}=-2,$ we have $\gamma^4=-1,$ which is a contradiction to $\gamma^{2p}=-1$ as $p$ is an odd prime.
 \\\textit{Case II} $(a-b=\frac{n}{2})$: If $\overline{C_n}$ admits plus PGST from $\frac{1}{\sqrt{2}}\ob{\e_a+\e_b},$ where $a$ and $b$ are antipodal vertices, then by Lemma \ref{5l10}, $n$ is divisible by $4$ and plus PGST occurs between $\frac{1}{\sqrt{2}}\ob{\e_a+\e_b}$ and $\frac{1}{\sqrt{2}}\ob{\e_{a+\frac{n}{4}}+\e_{b+\frac{n}{4}}}.$ Without loss of generality, let $a=0$ and $b=\frac{n}{2}.$
Let $P'$ be the permutation matrix corresponding to the automorphism of $\overline{C_n}$ defined by $P'\e_j=\e_{j+\frac{n}{4}}.$
   Therefore, we have
   \[(P'\x_l)(j)=\x_l\ob{j+\frac{n}{4}}=\omega_n^{l\ob{j+\frac{n}{4}}}=(i)^l\x_l(j).\]
   This shows that $P'\x_l=(i)^l \x_l,$ and consequently, $P'E_{\mu_l}=(i)^lE_{\mu_l}.$
  Since $P'^T=P'^{-1}$ and $\x_l^*P'^T=(i)^{-l}\x_l^*,$ we have $\x_l^*P'=(i)^l\x_l^*.$ It follows that $P'E_{\mu_l}=E_{\mu_l}P'$ and 
     \begin{equation}\label{5eq29}
         E_{\mu_l}\ob{e_0+\e_{\frac{n}{2}}}=P'E_{\mu_l}\ob{e_{\frac{n}{4}}+\e_{\frac{3n}{4}}}=(i)^lE_{\mu_l}\ob{e_{\frac{n}{4}}+\e_{\frac{3n}{4}}}.
          \end{equation}
          From the proof of \cite[Lemma 11]{pal4}, we observe that for an odd prime $p,$ 
\begin{equation}\label{5e22}
 1+2\sum_{r=1}^\frac{p-1}{2}\cos{\ob{\frac{2r\pi}{p}}}=0.
  \end{equation}
 Multiplying both sides of \eqref{5e22} by $2\cos{\ob{\frac{2\pi}{n}}},$ and $2\cos{\ob{\frac{6\pi}{n}}},$ yields
  \[
     \lambda_1+\sum_{r=1}^\frac{p-1}{2}\lambda_{mr+1}+ \sum_{r=1}^\frac{p-1}{2}\lambda_{mr-1}=0, \quad \text{and} \quad \lambda_3+\sum_{r=1}^\frac{p-1}{2}\lambda_{mr+3}+ \sum_{r=1}^\frac{p-1}{2}\lambda_{mr-3}=0,
  \]
respectively. Consequently, we have
 \begin{equation}\label{5eqn21}
    (\lambda_3-\lambda_1)+\sum_{r=1}^\frac{p-1}{2}(\lambda_{mr+3}-\lambda_{mr+1})+ \sum_{r=1}^\frac{p-1}{2}(\lambda_{mr-3}-\lambda_{mr-1})=0.
 \end{equation}
          Substituting $\lambda_l=-1-\mu_l$ in \eqref{5eqn21}, we obtain
  \begin{equation}\label{5eq26}
    (\mu_3-\mu_1)+\sum_{r=1}^\frac{p-1}{2}(\mu_{mr+3}-\mu_{mr+1})+ \sum_{r=1}^\frac{p-1}{2}(\mu_{mr-3}-\mu_{mr-1})=0.
 \end{equation}
 Suppose $\overline{C_n}$ admits plus PGST between $\frac{1}{\sqrt{2}}\ob{\e_0+\e_\frac{n}{2}}$ and $\frac{1}{\sqrt{2}}\ob{\e_{\frac{n}{4}}+\e_{\frac{3n}{4}}},$ then there exists a sequence $t_k\in\mathbb{R}$ and a complex number $\gamma$ of unit modulus such that 
\begin{equation}\label{5eq31}
      \gamma \sum_{l=0}^{n-1}E_{\mu_l}\ob{\e_{\frac{n}{4}}+\e_{\frac{3n}{4}}}=\gamma\ob{\e_{\frac{n}{4}}+\e_{\frac{3n}{4}}}=\lim_{k \to \infty}\sum_{l=0}^{n-1}\exp{(it_k\mu_l)}E_{\mu_l}\ob{\e_0+\e_{\frac{n}{2}}}.
   \end{equation}
  Multiplying both sides of \eqref{5eq31} by $E_{\mu_l}$ and using \eqref{5eq29}, we obtain 
$\displaystyle\lim_{k \to \infty}\exp{(it_k\mu_l)}=(-i)^l\gamma.$
 Therefore, 
$ \displaystyle\lim_{k \to \infty}\exp{(it_k(\mu_{l+2}-\mu_l))}=-1. $ Denoting the term on the left-hand side of \eqref{5eq26} as $W'',$ we arrive at a contradiction
  $\displaystyle\lim_{k \to \infty}\exp{\ob{it_kW''}}=-1$
as $W''=0.$ Hence, no plus PGST occurs between $\frac{1}{\sqrt{2}}\ob{\e_0+\e_{\frac{n}{2}}}$ and $\frac{1}{\sqrt{2}}\ob{\e_{\frac{n}{4}}+\e_{\frac{3n}{4}}}.$ 
\end{proof}
Now we determine the conditions under which plus PGST occurs from a pair of antipodal vertices in the complement $\overline{C_n}.$
\begin{lem}\label{5lem6}
Let $a$ and $b$ be a pair of antipodal vertices in $\overline{C_n},$ the complement of the cycle $C_n.$
The complement $\overline{C_n}$ admits pretty good plus state transfer from $\frac{1}{\sqrt{2}}\ob{\e_a+\e_b}$ if and only if $n=2^k$ with $k\geq 3.$ 
\end{lem}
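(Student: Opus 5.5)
The plan is to prove necessity by combining earlier lemmas with a check of the case $n=4$, and sufficiency by reducing the required phase conditions to vertex PGST in $\overline{C_{n/2}}$, which Theorem \ref{5t24} provides.

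\emph{Necessity.} Let $\overline{C_n}$ admit plus PGST from $\frac{1}{\sqrt{2}}\ob{\e_a+\e_b}$ with $a-b=\frac{n}{2}$, and take $a=0$, $b=\frac{n}{2}$ without loss of generality. Since $\overline{C_n}$ is a circulant graph, Lemma \ref{5l10} shows that $4\mid n$ and that the target state is $\frac{1}{\sqrt{2}}\ob{\e_{\frac{n}{4}}+\e_{\frac{3n}{4}}}$. If $n$ had an odd prime factor, Lemma \ref{5lemm6} would rule out plus PGST, so $n=2^k$ with $k\ge 2$. It remains to exclude $n=4$. In that case $\overline{C_4}$ is the perfect matching with edges $\{0,2\}$ and $\{1,3\}$. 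The vector $\e_0+\e_2$ is then an eigenvector of $A(\overline{C_4})$, so the state $\frac{1}{\sqrt{2}}\ob{\e_0+\e_2}$ is fixed and transfers nowhere. Hence $k\ge 3$.

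\emph{Sufficiency.} Let $n=2^k$ with $k\ge 3$, and put $m=\frac{n}{2}=2^{k-1}\ge 4$. I will use the spectral setup of Case II in the proof of Lemma \ref{5lemm6}.

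\begin{enumerate}
\item By \eqref{5eq29}, $E_{\mu_l}\ob{\e_0+\e_{\frac{n}{2}}}=(i)^l E_{\mu_l}\ob{\e_{\frac{n}{4}}+\e_{\frac{3n}{4}}}$.
\item Since $\x_l(0)+\x_l(\frac{n}{2})=1+(-1)^l$, we have $E_{\mu_l}\ob{\e_0+\e_{\frac{n}{2}}}=0$ for odd $l$. So the eigenvalue support consists only of $\mu_{2j}$ with $0\le j\le m-1$.
\item By \eqref{5eq31}, plus PGST between the two states is therefore equivalent to finding $t_k$ and a unimodular $\gamma$ with
\[\lim_{k\to\infty}\exp\ob{it_k\mu_{2j}}=(-1)^j\gamma \quad\text{for all } 0\le j\le m-1.\]
The potential ambiguity from $\mu_{2j}=\mu_{n-2j}$ is harmless, because $(-1)^j=(-1)^{m-j}$ when $m$ is even.
\item Compute these eigenvalues: $\mu_{2j}=-1-2\cos\ob{\frac{2j\pi}{m}}$ for $1\le j\le m-1$, and $\mu_0=n-3=2m-3$.
\item Compare with $\overline{C_m}$, whose eigenvalues are $\nu_0=m-3$ and $\nu_j=-1-2\cos\ob{\frac{2j\pi}{m}}$ for $j\ge 1$. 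Thus $\mu_{2j}=\nu_j$ for $j\ge 1$, while $\mu_0=\nu_0+m$.
\end{enumerate}

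By Theorem \ref{5t24}, $\overline{C_m}$ admits vertex PGST between antipodal vertices $0$ and $\frac{m}{2}$, since $m=2^{k-1}$ with $k-1\ge 2$. The same projection argument as above, now using the shift by $\frac{m}{2}$, which acts by $(-1)^j$ on $\x_j$, gives a sequence $t_k$ and a unimodular $\gamma'$ with $\lim\exp\ob{it_k\nu_j}=(-1)^j\gamma'$ for all $j$. Every $j$ is needed here, since $\e_0$ has full eigenvalue support.

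This immediately gives the required limits for $j\ge 1$ with $\gamma=\gamma'$. The only remaining point is the index $j=0$, where $\exp\ob{it_k\mu_0}=\exp\ob{it_k\nu_0}\exp\ob{it_km}$, so I need $\exp\ob{it_km}\to 1$. To get this, I will combine the conditions for $\nu_0$ and $\nu_{m/2}$. Since $\nu_{m/2}=-1-2\cos\pi=1$ and $m/2$ is even, we get
\[\exp\ob{it_k(m-4)}=\exp\ob{it_k(\nu_0-\nu_{m/2})}\to 1.\]
Also $\nu_{m/4}=-1$, and $m/4$ is an integer with $(-1)^{m/4}=\pm1$. Comparing $\nu_{m/2}=1$ and $\nu_{m/4}=-1$ gives $\exp\ob{2it_k}\to(-1)^{m/4}$, hence $\exp\ob{4it_k}\to 1$. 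Together with $\exp\ob{it_k(m-4)}\to 1$, this yields $\exp\ob{it_km}\to 1$, as needed.

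With this, $\lim\exp\ob{it_k\mu_{2j}}=(-1)^j\gamma'$ holds for all $j$. Summing $E_{\mu_l}$ times these limits against \eqref{5eq29} then gives $U(t_k)\ob{\e_0+\e_{\frac{n}{2}}}\to\gamma'\ob{\e_{\frac{n}{4}}+\e_{\frac{3n}{4}}}$.

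I expect the main obstacle to be this eigenvalue bookkeeping: matching the support of $\overline{C_n}$ to the spectrum of $\overline{C_{n/2}}$, and handling the mismatch at $\mu_0$, which is what forces the extra condition $\exp\ob{it_km}\to 1$. If that derivation fails for small $m$, I would instead apply Kronecker's theorem directly, using the linear independence over $\mathbb{Q}$ of $\cos\ob{\frac{2j\pi}{m}}$ for $1\le j<\frac{m}{4}$, as in \cite{pal4}.
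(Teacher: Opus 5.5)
Your proof is correct. The necessity half matches the paper's: Lemma \ref{5l10}, then Lemma \ref{5lemm6}, then the fixed state in $\overline{C_4}$. Your sufficiency argument, however, takes a genuinely different route.

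The paper works through the cycle itself. It takes vertex PGST in $C_{n/2}$ along a sequence in $2\pi\mathbb{Z}$ from \cite[Theorem 7]{pal4} and lifts it to plus PGST in the double cover $C_n$ via Corollary \ref{5cor3}(1). It then passes to $\overline{C_n}$ by Remark \ref{5remark2}, which needs $nt_l\in 2\pi\mathbb{Z}$. This only covers $k\geq 4$, so $k=3$ is handled separately using plus PST in $C_8$ at $\frac{\pi}{2}$ together with Theorem \ref{5t15}.

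You instead stay inside $\overline{C_n}$. The eigenvalue support of $\e_0+\e_{\frac{n}{2}}$ is $\{\mu_{2j}\}$, and this matches the spectrum of $\overline{C_{n/2}}$ except that $\mu_0=\nu_0+\frac{n}{2}$. You then show that the one missing condition, $\exp\ob{it_k\frac{n}{2}}\to 1$, is already forced by the phase conditions of antipodal vertex PGST in $\overline{C_{n/2}}$, using $\nu_{m/2}=1$ and $\nu_{m/4}=-1$. I checked this step. Both $\exp\ob{it_k(m-4)}\to 1$ and $\exp\ob{4it_k}\to 1$ follow, including at $m=4$, where $m-4=0$.

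What your route buys:
\begin{enumerate}
\item It needs only Theorem \ref{5t24}, not the stronger statement that the PGST sequence can be taken in $2\pi\mathbb{Z}$.
\item It treats $k=3$ uniformly with $k\geq 4$, with no separate PST computation.
\item It is self-contained. In fact it shows that every sequence realizing antipodal vertex PGST in $\overline{C_{n/2}}$ also realizes plus PGST in $\overline{C_n}$.
\end{enumerate}
The paper's route, in exchange, showcases the double-cover and complementation machinery developed earlier in the paper. Your Kronecker fallback is not needed.
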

\begin{proof}
Without loss of generality let $a=0$ and $b=\frac{n}{2}.$
    If $n=2^k,$ with $k\geq 3,$ then $C_n$ as well as its complement admit vertex PGST with respect to the same sequence $t_l$ in $2\pi\mathbb{Z}$ \cite[Theorem 7]{pal4}. Since $C_n$ can be realized as a double cover of $C_{\frac{n}{2}},$ by Corollary \ref{5cor3}(1), the cycle $C_{2^k}$ admits plus PGST between $\frac{1}{\sqrt{2}}\ob{\e_0+\e_{\frac{n}{2}}}$ and $\frac{1}{\sqrt{2}}\ob{\e_{\frac{n}{4}}+\e_{\frac{3n}{4}}},$ whenever $k\geq 4$ with respect to the same sequence. As argued in Remark \ref{5remark2}, since $2^kt_l\in2\pi\mathbb{Z},$ $\overline{C_{2^k}}$
admits plus PGST between $\frac{1}{\sqrt{2}}\ob{\e_0+\e_{\frac{n}{2}}}$ and $\frac{1}{\sqrt{2}}\ob{\e_{\frac{n}{4}}+\e_{\frac{3n}{4}}},$ whenever $k\geq 4.$ For $k=3,$ the cycle $C_8$ has plus PST between $\frac{1}{\sqrt{2}}\ob{\e_0+\e_4}$ and $\frac{1}{\sqrt{2}}\ob{\e_2+\e_6}$ at $\frac{\pi}{2}.$ Hence, by Theorem \ref{5t15}, plus PST occurs  between those states in $\overline{C_8}$.

Conversely, if $\overline{C_n}$ admits plus PGST from $\frac{1}{\sqrt{2}}\ob{\e_0+\e_\frac{n}{2}},$ then by Lemma \ref{5l10}, it occurs between $\frac{1}{\sqrt{2}}\ob{\e_0+\e_\frac{n}{2}}$ and $\frac{1}{\sqrt{2}}\ob{\e_\frac{n}{4}+\e_\frac{3n}{4}}.$  Using the observation above and Lemma \ref{5lemm6}, we find that the only possible case is when $n=2^k$ with $k\geq 2.$
The graph $\overline{C_4}$ does not admit plus PGST between $\frac{1}{\sqrt{2}}\ob{\e_0+\e_2}$ and $\frac{1}{\sqrt{2}}\ob{\e_1+\e_3}$ as $\frac{1}{\sqrt{2}}\ob{\e_0+\e_2}$ is a fixed state in $\overline{C_4}.$ Hence, the result follows.   
\end{proof}

 Now we provide a complete characterization of plus PGST in the cycle $C_n$ and its complement $\overline{C_n}$ by combining Theorem \ref{5thm9}, Lemma \ref{5lemm6} and Lemma \ref{5lem6}. 
 \begin{thm}\label{5th10}
      A cycle $C_n$ as well as its complement $\overline{C_n}$ admit pretty good plus state transfer if and only if $n=2^k$ with $k\geq 2.$ Moreover, for $k\geq 2,$ every plus state in $C_{2^k}$ and for $k\geq 3,$ every plus state in $\overline{C_{2^k}}$ exhibit pretty good plus state transfer.
 \end{thm}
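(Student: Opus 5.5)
The plan is to assemble Theorem \ref{5th10} from the results already established, splitting on whether $n$ is a power of $2$ and, for the complement, on whether the initial plus state sits on antipodal vertices. For the cycle $C_n$ itself nothing new is needed: Theorem \ref{5thm9} already gives both directions and the claim that every plus state in $C_{2^k}$, $k\ge 2$, exhibits plus PGST. So the work is entirely on the complement $\overline{C_n}$.

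\emph{Necessity for $\overline{C_n}$.} Suppose $\overline{C_n}$ admits plus PGST. If $n$ is odd, Lemma \ref{5l8} rules it out, since $\overline{C_n}$ is a circulant graph. If $n$ has any odd prime factor $p$, write $n=mp$ and apply Lemma \ref{5lemm6}, which excludes plus PGST in both the antipodal and non-antipodal cases. Hence $n=2^k$. Since $n\ge 3$ for a cycle, $k\ge 2$.

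\emph{Sufficiency and the ``moreover'' part for $\overline{C_{2^k}}$.} Take a plus state $\frac{1}{\sqrt2}(\e_a+\e_b)$ in $\overline{C_{2^k}}$.
\begin{enumerate}
\item If $a$ and $b$ are antipodal, Lemma \ref{5lem6} gives plus PGST precisely when $k\ge 3$.
\item If $a-b\neq \frac n2$, Lemma \ref{5t26} directly gives plus PGST in $\overline{C_n}$ to $\frac{1}{\sqrt2}(\e_{a+\frac n2}+\e_{b+\frac n2})$, for every $k\ge 2$.
\end{enumerate}
Thus for $k\ge 3$ every plus state in $\overline{C_{2^k}}$ exhibits plus PGST. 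For $k=2$, the graph $\overline{C_4}$ is a perfect matching, namely edges $\{0,2\}$ and $\{1,3\}$. Here the non-antipodal pairs still give plus PGST by Lemma \ref{5t26}; for instance, $\frac{1}{\sqrt2}(\e_0+\e_1)$ is transferred to $\frac{1}{\sqrt2}(\e_2+\e_3)$ by the PST of $K_2$ at $\frac{\pi}{2}$. The antipodal pairs are fixed states. This explains why the existence statement holds for all $k\ge 2$ while the universality statement requires $k\ge 3$.

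The main obstacle is only bookkeeping: confirming that Lemma \ref{5t26} genuinely covers the complement for non-antipodal pairs, which rests on Theorem \ref{5t24} for $\overline{C_n}$ together with Corollary \ref{5cor3}, and that the $k=2$ existence is witnessed by a non-antipodal pair, since the antipodal pairs fail. If one doubts Lemma \ref{5t26} for the complement, the fallback is the explicit check of $\overline{C_4}$ given above, combined with Lemma \ref{5lem6} for $k\ge 3$. That combination already suffices for the ``if and only if'' statement.
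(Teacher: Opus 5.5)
Your proposal is correct and follows the paper's proof, which likewise combines Theorem~\ref{5thm9}, Lemma~\ref{5lemm6} and Lemma~\ref{5lem6}, with Lemma~\ref{5t26} implicitly covering the non-antipodal plus states in $\overline{C_{2^k}}$; you make that step explicit, along with the $\overline{C_4}$ case. One small correction to your side remark: Lemma~\ref{5t26} comes from Theorem~\ref{5t24} via the translation automorphisms, since one sequence and one phase serve every antipodal pair, and not from Corollary~\ref{5cor3}, which only yields plus states on antipodal vertices.
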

 \section{PGST in weighted paths with potential}\label{5sec6}
 Let $P_n$ be a path on $n$ vertices with vertex set $\{1,2,\ldots,n\},$ where two vertices $j$ and $k$ are adjacent if and only if $|j-k|=1.$ 
  We study the existence of vertex PGST in certain weighted paths with potential using equitable partitions. A partition $\Pi$ of the vertex set of a graph $G$ with disjoint cells $V_1, V_2, \ldots, V_d$ is said to be \textit{equitable} if each vertex in $V_j$ is adjacent to exactly $c_{jk}$ vertices in $V_k,$ where 
$c_{jk}$ is a constant depending only on the cells $V_j$ and $V_k$. 
The graph with the $d$ cells of $\Pi$ as its vertices having adjacency matrix $A(G/\Pi),$ where $A(G/\Pi)_{j,k}=\sqrt{c_{jk}c_{kj}},$ is called the \textit{symmetrized quotient graph} of $G$ over $\Pi,$ and is denoted by $G/\Pi.$ A relation between vertex PGST in $G/\Pi$ and plus PGST in $G$ is observed in \cite[Remark 5]{god8}, which implies that for $V_1=\{a,b\}$ and $V_2=\{c,d\},$ PGST occurs between $V_1$ and $V_2$ in $G/\Pi$ if and only if PGST occurs between $\frac{1}{\sqrt{2}}\ob{\e_a+\e_b}$ and $\frac{1}{\sqrt{2}}\ob{\e_c+\e_d}$ in $G.$

It is well known that PST occurs only on the paths $P_2$ and $P_3$ with respect to the adjacency matrix \cite{god1}. This raises a question of whether PST could be induced between the endpoints of
a path of arbitrary length by placing a suitable potential on the endpoints. Kempton et al. \cite{kem1} showed that there is no potential that induces PST between the end
points of a path of length at least $4.$ However, for a path $P_n$ of any length, there is some choice of potential such that by
placing the potential on each endpoint of $P_n,$ there is PGST between the endpoints \cite{kem2}. 
  Further, it is observed in \cite{kirk5} that for paths on more than two vertices, the set of weights $w\geq 1$ assigned to the end vertices, for which the resulting path does not admit PGST, forms a dense subset of $[1,\infty).$ 
Here, we present a few observations on the existence of PGST on weighted paths with potentials.

\begin{figure}[]
\centering
\begin{minipage}{0.48\textwidth}
\centering
\begin{tikzpicture}[scale=0.6]
\tikzstyle{every node}=[circle, thick, black!90, fill=white, scale=0.65]
\node[draw,minimum size=0.55cm, inner sep=0 pt] (0) at (0,0) {$0$};
\node[draw,minimum size=0.55cm, inner sep=0 pt] (1) at (2,-2) {$1$};
\node[draw,minimum size=0.55cm, inner sep=0 pt] (2) at (4,-2) {$2$};
\fill (5.1,-2) circle (2.5pt);
\fill (5.5,-2) circle (2.5pt);
\fill (5.9,-2) circle (2.5pt);
\node[draw,minimum size=0.55cm, inner sep=0 pt] (3) at (7,-2) {};
\node[draw,minimum size=0.55cm, inner sep=1.5 pt] (4) at (9,-2) {$n-1$};
\node[draw,minimum size=0.55cm, inner sep=0 pt] (5) at (9,2) {$n$};
\node[draw,minimum size=0.55cm, inner sep=0 pt] (6) at (7,2) {};
\node[draw,minimum size=0.55cm, inner sep=0 pt] (7) at (4,2) {};
\fill (5.1,2) circle (2.5pt);
\fill (5.5,2) circle (2.5pt);
\fill (5.9,2) circle (2.5pt);
\node[draw,minimum size=0.55cm, inner sep=1.5 pt] (8) at (2,2) {$\scriptsize 2n-2$};
\draw (0)--(1)--(2);
\draw (3)--(4)--(5)--(6);
\draw (7)--(8)--(0);
\end{tikzpicture}
\subcaption{}
\end{minipage}
\hfill
\begin{minipage}{0.45\textwidth}
\centering
\begin{tikzpicture}[scale=1.1]
\tikzstyle{every node}=[circle, thick, black!90, fill=white, scale=0.65]
\node[draw,minimum size=0.55cm, inner sep=0 pt] (1) at (0,0) {$1$};
\node[draw,minimum size=0.55cm, inner sep=0 pt] (2) at (1,0) {$2$};
\node[draw,minimum size=0.55cm, inner sep=0 pt] (3) at (2,0) {$3$};
\fill (2.6,0) circle (1.5pt);
\fill (3,0) circle (1.5pt);
\fill (3.4,0) circle (1.5pt);
\node[draw,minimum size=0.55cm, inner sep=0 pt] (4) at (4,0) {};
\node[draw,minimum size=0.55cm, inner sep=0 pt] (5) at (5,0) {$n$};
\draw[thick]
(5) .. controls +(60:1.5cm) and +(120:1.5cm) ..
node[pos=0.3, above right] {$1$}
(5);
\draw (1) -- (2) node[midway, above, yshift=0.5pt] {$\sqrt{2}$};
\draw (2) -- (3);
\draw (4) -- (5);
\end{tikzpicture}
\subcaption{}
\vspace{1cm} 
\begin{tikzpicture}[scale=1]
\tikzstyle{every node}=[circle, thick, black!90, fill=white, scale=0.65]
\node[draw,minimum size=0.55cm, inner sep=0 pt] (1) at (-1,0.6) {$a$};
\node[draw,minimum size=0.55cm, inner sep=0 pt] (2) at (-1,-0.6) {$b$};
\node[draw,minimum size=0.55cm, inner sep=0 pt] (3) at (0,0) {$1$};
\node[draw,minimum size=0.55cm, inner sep=0 pt] (4) at (1,0) {$2$};
\fill (1.6,0) circle (1.5pt);
\fill (2,0) circle (1.5pt);
\fill (2.4,0) circle (1.5pt);
\node[draw,minimum size=0.55cm, inner sep=0 pt] (5) at (3,0) {};
\node[draw,minimum size=0.55cm, inner sep=1.5 pt] (6) at (4,0) {$n-1$};
\draw[thick]
(6) .. controls +(60:1.5cm) and +(120:1.5cm) ..
node[pos=0.3, above right] {$1$}
(6);
\draw (1)--(3)--(2);
\draw (3)--(4);
\draw (5)--(6);
\end{tikzpicture}
\subcaption{}
\end{minipage}
\caption{(a) Odd cycle $C_{2n-1},$ (b) Symmetrized quotient graph of $C_{2n-1},$ (c) A graph with symmetrized quotient matrix equal to the adjacency matrix of Figure (b).}
\label{5fig4}
\end{figure}
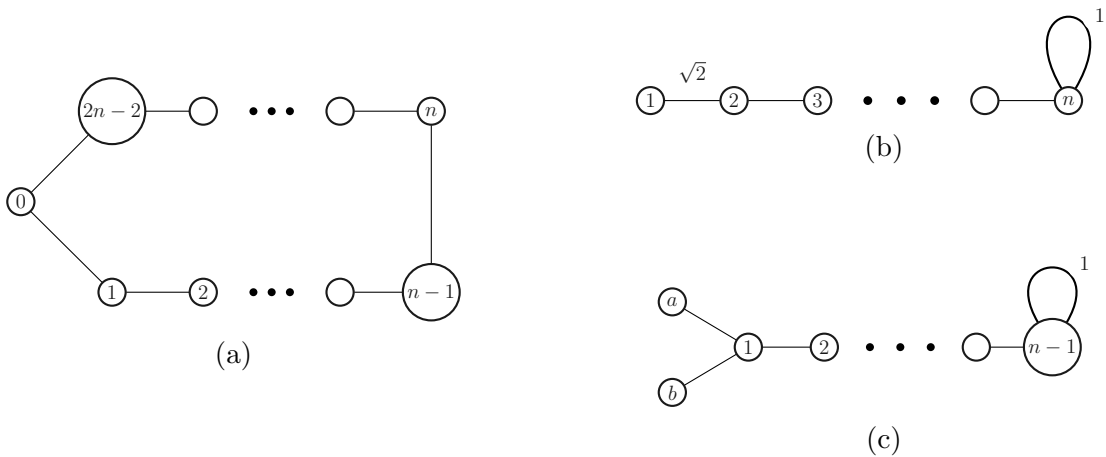
\begin{thm}
    There is no pretty good vertex state transfer in a weighted path on $n$ vertices, with potential $1$ only at the end vertex $n$ and $w(1,2)=\sqrt{2}$ and all other edges having weight $1.$ 
\end{thm}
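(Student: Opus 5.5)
The plan is to identify the weighted path in the statement as the symmetrized quotient of the odd cycle $C_{2n-1}$, shown in Figure \ref{5fig4}(b). Then I will transfer any vertex PGST in the path back to a state transfer in $C_{2n-1}$ and rule that out using the results already proved.

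\textbf{The partition.} Label $C_{2n-1}$ as in Figure \ref{5fig4}(a) and take $\Pi$ with cells
\[V_1=\{0\},\quad V_j=\{j-1,\,2n-j\}\ \ (2\le j\le n).\]
This partition is equitable; it is the orbit partition of the reflection $x\mapsto -x$.
\begin{itemize}[leftmargin=*]
\item Vertex $0$ has $2$ neighbours in $V_2$, and each vertex of $V_2$ has $1$ neighbour in $V_1$. Hence $A(G/\Pi)_{1,2}=\sqrt{2}$.
\item For $2\le j\le n-1$, consecutive cells have $c_{j,j+1}=c_{j+1,j}=1$, giving edge weight $1$.
\item The last cell $V_n=\{n-1,n\}$ contains the edge $(n-1,n)$, so $c_{nn}=1$. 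This is the potential $1$ at vertex $n$.
\end{itemize}
So $A(G/\Pi)$ is exactly the matrix of the weighted path in the statement.

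\textbf{Transferring the walk.} Let $Q$ be the $(2n-1)\times n$ matrix whose columns are the normalized characteristic vectors of the cells. Then $Q$ has orthonormal columns and $A(C_{2n-1})Q=QA(G/\Pi)$. It follows that $U_{A(C_{2n-1})}(t)Q=QU_{A(G/\Pi)}(t)$ for all $t$. Since $Q$ is an isometry, $\lim_k U_{G/\Pi}(t_k)\e_j=\gamma\e_l$ holds if and only if $\lim_k U_{C_{2n-1}}(t_k)Q\e_j=\gamma Q\e_l$. This is the mechanism behind \cite[Remark 5]{god8}, and it also covers the singleton cell $V_1$.

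Here $Q\e_1=\e_0$, while $Q\e_j=\frac{1}{\sqrt2}(\e_{j-1}+\e_{2n-j})$ for $j\ge2$ is a plus state. So vertex PGST between $j\neq l$ in the path is equivalent to PGST in $C_{2n-1}$ of one of two types:
\begin{enumerate}
\item between two distinct plus states, when $j,l\ge2$;
\item between $\e_0$ and a plus state $\frac{1}{\sqrt2}(\e_{m}+\e_{-m})$ with $m\neq0$, when one of $j,l$ is $1$.
\end{enumerate}

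\textbf{Ruling out both types.}
\begin{enumerate}
\item Type (1) is excluded by Lemma \ref{5l8}, because $C_{2n-1}$ is a circulant graph of odd order.
\item For type (2), I apply Theorem \ref{5th1} with $a=0$, $b=m$, $c=-m$. I need an automorphism of $C_{2n-1}$ fixing $m$ but not $0$. The reflection $x\mapsto 2m-x$ works: it fixes $m$, and it fixes $0$ only if $2m\equiv0\pmod{2n-1}$. Since $2n-1$ is odd, that would force $m\equiv0$, which is not the case.
\end{enumerate}
Hence no PGST of either kind exists, and the weighted path has no vertex PGST.

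\textbf{Main obstacle.} The step needing the most care is the quotient correspondence, in particular justifying the intertwining $A(C_{2n-1})Q=QA(G/\Pi)$. The entries of $A(G/\Pi)$ are $\sqrt{c_{jk}c_{kj}}$ and include a diagonal (loop) entry, so I will verify the intertwining entrywise rather than just cite \cite{god8}. I also need to check that the phase factor $\gamma$ carries over unchanged in both directions. The rest reduces directly to Lemma \ref{5l8} and Theorem \ref{5th1}.
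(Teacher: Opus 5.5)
Your proposal is correct and follows essentially the same route as the paper. You realize the weighted path as the symmetrized quotient of $C_{2n-1}$ under the reflection orbit partition, exclude transfer among vertices $2,\ldots,n$ via Lemma \ref{5l8}, and exclude transfer involving vertex $1$ via Theorem \ref{5th1} with a reflection that fixes a vertex of the plus state but not $0$. The only differences are that you verify the intertwining $A(C_{2n-1})Q=QA(G/\Pi)$ directly rather than citing \cite[Remark 5]{god8}, and you name the automorphism $x\mapsto 2m-x$ explicitly.
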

\begin{proof}
    The odd cycle $C_{2n-1}$ given in Figure \ref{5fig4}(a) has an equitable partition $\Pi$ with cells $V_1=\{0\}, V_2=\{1, 2n-2\},\ldots, V_{n}=\{n-1,n\}.$
  Lemma \ref{5l8} together with \cite[Remark 5]{god8} demonstrates that there is no PGST between any pair of vertices in  $\{2,3,\ldots,n\}$ in the symmetrized quotient graph of $C_{2n-1}$ given in Figure \ref{5fig4}(b). Suppose the graph has PGST between $1$ and $j,$ for some $j\in\{2,3,\ldots,n\}.$ Then by \cite[Remark 5]{god8}, the cycle in Figure \ref{5fig4}(a)
 has PGST between $\e_0$ and $\frac{1}{\sqrt{2}}\ob{\e_a+\e_b},$ where $a$ and $b$ are in $V_j.$ This gives a contradiction to Theorem \ref{5th1} as there is an automorphism of $C_n$ which fixes only the vertex $a.$ Hence the result follows.  
\end{proof}


Note that \cite[Corollary 9.2]{god1} also holds for PGST. Therefore, if $G$ admits PGST between $a$ and $b,$ then each automorphism fixing $a$ must fix $b.$ Suppose there is PGST between $a$ and $j,$ for some $j\in\{1,2,\ldots, n-1\},$ in Figure \ref{5fig4}(c). Since there is an automorphism of the graph in Figure \ref{5fig4}(c) that fixes $j$ but not $a,$ there is no PGST between $a$ and $j.$
 The graph in Figure \ref{5fig4}(c) has an equitable partition $\Pi'$ with cells $V_1=\{a,b\}, V_j=\{j-1\},$ for $2\leq j\leq n.$ The symmetrized quotient matrix relative to $\Pi'$ is equal to the adjacency matrix of the graph given in Figure \ref{5fig4}(b). Therefore, using \cite[Remark 5]{god8}, the result follows.
\begin{cor}
Let $G$ be the graph obtained from $P_n$ by joining two pendant vertices to vertex 
$1$ and assigning a potential 
$1$ only at vertex $n.$ Then there is no pretty good vertex state transfer in
$G$ from the vertices of $P_n.$ 
\end{cor}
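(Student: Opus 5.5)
The plan is to reduce PGST in $G$ to PGST in the symmetrized quotient graph of Figure~\ref{5fig4}(b) via the equitable partition $\Pi'$, and then use the preceding theorem, which rules out vertex PGST in that weighted path with potential. The vertices of $P_n$ in $G$ are labelled $1,2,\ldots,n$; in Figure~\ref{5fig4}(c) they appear as $1,\ldots,n-1$ together with the two pendant vertices $a,b$. I will work with $G$ as drawn in Figure~\ref{5fig4}(c), adjusting indices where needed. The key point is that every vertex of $P_n$ is a singleton cell of the partition $\Pi'=\{\{a,b\},\{1\},\{2\},\ldots\}$. This partition is equitable: $a$ and $b$ each have exactly one neighbour, namely vertex $1$, and vertex $1$ has two neighbours in $\{a,b\}$. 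The potential sits on a singleton cell, so it causes no trouble. The symmetrized quotient has weight $\sqrt{2\cdot 1}=\sqrt2$ on the edge between the cell $\{a,b\}$ and the cell $\{1\}$, and it coincides with the weighted path with potential of the previous theorem.

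First, I check that the spectral description of $U(t)$ restricts correctly. The characteristic matrix $\hat{P}$ of $\Pi'$, normalized so that its columns are $\e_v$ for singleton cells and $\frac{1}{\sqrt2}(\e_a+\e_b)$ for the cell $\{a,b\}$, satisfies $M(G)\hat P=\hat P M(G/\Pi')$. It follows that $U_{M(G)}(t)\hat P=\hat P\,U_{M(G/\Pi')}(t)$. This is the content of \cite[Remark 5]{god8}, and it holds verbatim when a potential is present, since the potential is constant on cells.

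Second, suppose $G$ had PGST between two vertices $u,v$ of $P_n$. Both $\e_u$ and $\e_v$ lie in the column space of $\hat P$, being images of quotient vertex states. The intertwining relation then gives
\[
\lim_{k\to\infty}\hat P\,U_{M(G/\Pi')}(t_k)\e_{[u]}=\gamma\hat P\e_{[v]},
\]
and since $\hat P$ has orthonormal columns, this yields vertex PGST between $[u]$ and $[v]$ in the quotient. That contradicts the previous theorem.

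Third, the statement says ``from the vertices of $P_n$'', so I must also exclude PGST from a vertex $j$ of $P_n$ to a pendant vertex $a$ or $b$. For this I use the automorphism swapping $a$ and $b$, which fixes every vertex of $P_n$. Since \cite[Corollary 9.2]{god1} extends to PGST, any automorphism fixing $j$ must then fix the target vertex. The swap fixes $j$ but moves $a$, so such PGST is impossible.

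The main obstacle is justifying that the quotient correspondence and the automorphism criterion carry over to the weighted, potential-bearing setting and to PGST rather than PST. I would handle it directly from the intertwining identity and from the commutation of $P$ with $U(t)$, as in the proof of Theorem~\ref{5th1}.
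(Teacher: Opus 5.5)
Your proposal is correct and follows the paper's own argument. You rule out PGST from a path vertex to a pendant vertex using the automorphism that swaps the two pendant vertices (the extension of \cite[Corollary 9.2]{god1} to PGST). You then use the equitable partition with $\{a,b\}$ as its only non-singleton cell to reduce PGST between path vertices to the preceding weighted-path theorem via \cite[Remark 5]{god8}; your explicit intertwining identity $U_{M(G)}(t)\hat P=\hat P\,U_{M(G/\Pi')}(t)$ simply spells out the content of that remark.
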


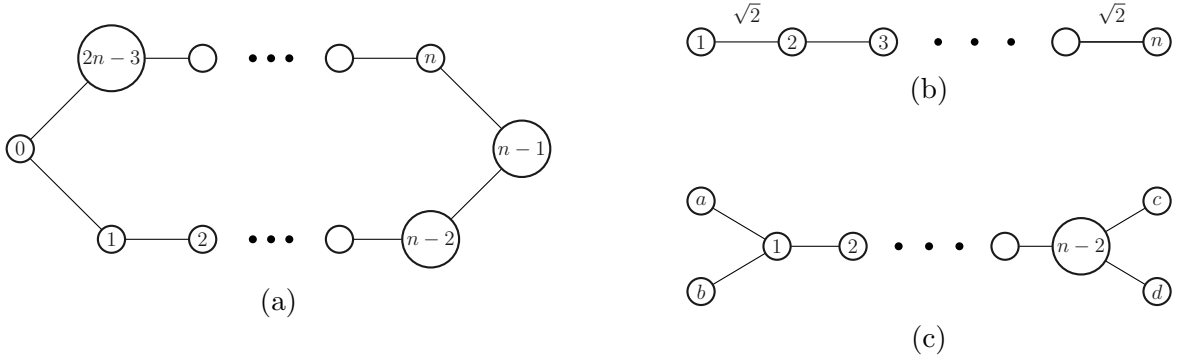
\begin{figure}[]
\centering

\begin{minipage}{0.48\textwidth}
\centering
\begin{tikzpicture}[scale=0.6]
\tikzstyle{every node}=[circle, thick, black!90, fill=white, scale=0.65]

\node[draw,minimum size=0.55cm, inner sep=0 pt] (0) at (0,0) {$0$};
\node[draw,minimum size=0.55cm, inner sep=0 pt] (1) at (2,-2) {$1$};
\node[draw,minimum size=0.55cm, inner sep=0 pt] (2) at (4,-2) {$2$};
\fill (5.1,-2) circle (2.5pt);
\fill (5.5,-2) circle (2.5pt);
\fill (5.9,-2) circle (2.5pt);
\node[draw,minimum size=0.55cm, inner sep=0 pt] (3) at (7,-2) {};
\node[draw,minimum size=0.55cm, inner sep=1.5 pt] (4) at (9,-2) {$n-2$};
\node[draw,minimum size=0.55cm, inner sep=1.5 pt] (5) at (11,0) {$\scriptsize n-1$};
\node[draw,minimum size=0.55cm, inner sep=0 pt] (6) at (9,2) {$\scriptsize n$};
\node[draw,minimum size=0.55cm, inner sep=0 pt] (7) at (7,2) {};
\node[draw,minimum size=0.55cm, inner sep=0 pt] (8) at (4,2) {};
\fill (5.1,2) circle (2.5pt);
\fill (5.5,2) circle (2.5pt);
\fill (5.9,2) circle (2.5pt);
\node[draw,minimum size=0.55cm, inner sep=1.5 pt] (9) at (2,2) {$\scriptsize 2n-3$};

\draw (0)--(1)--(2);
\draw (3)--(4)--(5)--(6)--(7);
\draw (8)--(9)--(0);
\end{tikzpicture}
\subcaption{}
\end{minipage}
\hfill
\begin{minipage}{0.45\textwidth}
\centering

\begin{tikzpicture}[scale=1.2]
\tikzstyle{every node}=[circle, thick, black!90, fill=white, scale=0.65]

\node[draw,minimum size=0.55cm, inner sep=0 pt] (1) at (0,0) {$1$};
\node[draw,minimum size=0.55cm, inner sep=0 pt] (2) at (1,0) {$2$};
\node[draw,minimum size=0.55cm, inner sep=0 pt] (3) at (2,0) {$3$};
\fill (2.6,0) circle (1.2pt);
\fill (3,0) circle (1.2pt);
\fill (3.4,0) circle (1.2pt);
\node[draw,minimum size=0.55cm, inner sep=0 pt] (4) at (4,0) {};
\node[draw,minimum size=0.55cm, inner sep=0 pt] (5) at (5,0) {$n$};
\draw (1) -- (2) node[midway, above, yshift=0.5pt] {$\sqrt{2}$};
\draw (4) -- (5) node[midway, above, yshift=0.5pt] {$\sqrt{2}$};
\draw (2) -- (3);
\draw (4) -- (5);

\end{tikzpicture}
\subcaption{}
\vspace{1cm} 

\begin{tikzpicture}[scale=1]
\tikzstyle{every node}=[circle, thick, black!90, fill=white, scale=0.65]

\node[draw,minimum size=0.55cm, inner sep=0 pt] (1) at (-1,0.6) {$a$};
\node[draw,minimum size=0.55cm, inner sep=0 pt] (2) at (-1,-0.6) {$b$};
\node[draw,minimum size=0.55cm, inner sep=0 pt] (3) at (0,0) {$1$};
\node[draw,minimum size=0.55cm, inner sep=0 pt] (4) at (1,0) {$2$};
\fill (1.6,0) circle (1.5pt);
\fill (2,0) circle (1.5pt);
\fill (2.4,0) circle (1.5pt);
\node[draw,minimum size=0.55cm, inner sep=0 pt] (5) at (3,0) {};
\node[draw,minimum size=0.55cm, inner sep=1.5 pt] (6) at (4,0) {$\scriptsize n-2$};
\node[draw,minimum size=0.55cm, inner sep=0 pt] (7) at (5,0.6) {$c$};
\node[draw,minimum size=0.55cm, inner sep=0 pt] (8) at (5,-0.6) {$d$};
\draw (1)--(3)--(2);
\draw (3)--(4);
\draw (5)--(6)--(7);
\draw (6)--(8);

\end{tikzpicture}
\subcaption{}
\end{minipage}

\caption{(a) Even cycle $C_{2n-2},$ (b) Symmetrized quotient graph of $C_{2n-2},$ (c) A graph
with symmetrized quotient matrix equal to the adjacency matrix of Figure (b).}
\label{5fig5}

\end{figure}

Now we consider the symmetrized quotient graph of an even cycle to have the following observations.
 \begin{thm}
    Let $P_n$ be a weighted path  with $w(1,2)=w(n-1,n)=\sqrt{2}$ and the remaining edges have weight $1.$ Then $P_n$ admits pretty good vertex state transfer if and only if $n-1=2^k$ with $k\geq 1.$
\end{thm}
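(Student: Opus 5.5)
The plan is to realize the weighted path as a symmetrized quotient of the even cycle $C_{2n-2}$ (Figure \ref{5fig5}(a)). Take the equitable partition $\Pi$ with cells $V_1=\{0\}$, $V_j=\{j-1,\,2n-1-j\}$ for $2\le j\le n-1$, and $V_n=\{n-1\}$. These are the orbits of the reflection $j\mapsto -j$, and $n-1$ is the vertex antipodal to $0$ in $C_{2n-2}$. Vertex $0$ has two neighbours in $V_2$, each vertex of $V_2$ has one neighbour in $V_1$, and symmetrically at the other end. So $c_{12}=2$, $c_{21}=1$, and the symmetrized weight is $\sqrt{2}$; all interior weights are $1$. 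Hence $C_{2n-2}/\Pi$ is exactly the weighted path of the statement. By \cite[Remark 5]{god8}, applied to cells that are singletons or pairs, PGST between cells $V_j$ and $V_l$ in the quotient is equivalent to PGST in $C_{2n-2}$ between the corresponding vertex states or plus states $\frac{1}{\sqrt2}(\e_x+\e_y)$.

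For sufficiency, suppose $n-1=2^k$, so $2n-2=2^{k+1}$ with $k+1\ge 2$. By Theorem \ref{5t24}, $C_{2n-2}$ has vertex PGST between the antipodal vertices $0$ and $n-1$. Through the quotient correspondence this gives PGST between the end vertices $1$ and $n$ of the weighted path.

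For necessity, suppose the weighted path admits PGST between vertices $j\ne l$, and split into three cases according to the cell types.
\begin{enumerate}
\item Both cells are singletons, so $\{j,l\}=\{1,n\}$. Then $C_{2n-2}$ has vertex PGST between $0$ and $n-1$, and Theorem \ref{5t24} forces $2n-2=2^{m}$ with $m\ge 2$, that is, $n-1=2^{m-1}$ with $m-1\ge1$.
\item One cell is a singleton and the other is a pair. Then $C_{2n-2}$ has PGST between a vertex state $\e_0$ (or $\e_{n-1}$) and a plus state $\frac{1}{\sqrt2}(\e_x+\e_y)$. Choose the reflection of the cycle that fixes $x$ but not $0$, namely $i\mapsto 2x-i$. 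It fixes only $x$ and $x+n-1$, and $x\notin\{0,n-1\}$, so it does not fix $0$. Theorem \ref{5th1} then gives a contradiction.
\item Both cells are pairs. Then $C_{2n-2}$ has plus PGST between two plus states formed from non-antipodal pairs $\{x,-x\}$ and $\{y,-y\}$. By Theorem \ref{5thm9}, $2n-2$ must be a power of $2$, which already gives the claimed condition on $n$.
\end{enumerate}
So in every case PGST forces $n-1=2^k$ with $k\ge1$.

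The step I expect to need most care is the quotient correspondence of \cite[Remark 5]{god8} when one cell is a singleton and the other a pair, and checking that the weight really comes out as $\sqrt{c_{jk}c_{kj}}=\sqrt2$ at both ends. I would also check the degenerate small case $n=2$: there $C_2$ is not a cycle and the path is just $P_2$ with weight $\sqrt2$, which has PST. Since $n-1=1=2^0$ is excluded by $k\ge1$, this case needs a separate treatment or the convention $n\ge3$.
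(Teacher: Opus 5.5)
Your proposal is correct and follows the paper's route. You realize the weighted path as the symmetrized quotient of $C_{2n-2}$ under the reflection-orbit partition and transfer the cycle results via \cite[Remark 5]{god8}, and your explicit three-case split (Theorem \ref{5t24} for the end vertices, Theorem \ref{5th1} for singleton versus pair, Theorem \ref{5thm9} for pair versus pair) spells out what the paper's one-line appeal to Theorem \ref{5th10} leaves implicit. One harmless slip is your claim that all pair cells are non-antipodal: when $n-1$ is even, the middle cell $\{\frac{n-1}{2},\frac{3(n-1)}{2}\}$ is antipodal, but case 3 is unaffected because Theorem \ref{5thm9} covers every plus state.
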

\begin{proof}
    The even cycle $C_{2n-2}$ given in Figure \ref{5fig5}(a) has an equitable partition with the cells $V_1=\{0\},V_2=\{1, 2n-3\},\ldots,V_{n-1}=\{n-2,n\},V_{n}=\{n-1\}.$ Using Theorem \ref{5th10} and \cite[Remark 5]{god8}, we have the weighted path exhibits PGST if and only if $2n-2=2^k$ with $k\geq 2.$ Hence the result follows.
\end{proof}
  The graph in Figure \ref{5fig5}(c) has an equitable partition $\Pi''$ with cells $V_1=\{a,b\}, V_j=\{j-1\}, V_n=\{c,d\}$ for $2\leq j \leq n-1.$ The symmetrized quotient matrix relative to $\Pi''$ is equal to the adjacency matrix of the graph given in Figure \ref{5fig5}(b). Therefore, by \cite[Remark 5]{god8}, the result follows.
 \begin{cor}\label{5t31}
 Let $G$ be the graph obtained from $P_n$ by attaching two pendant vertices to each end vertex of $P_n.$ Then pretty good vertex state transfer occurs in $G$ from the vertices of 
 $P_n$ if and only if $n+1=2^k$ with $k\geq 1.$ 
 \end{cor}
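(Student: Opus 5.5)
The plan is to reduce the statement to the weighted-path theorem just proved, via the equitable partition $\Pi''$ of the graph $G$ in Figure \ref{5fig5}(c). Here $G$ has $n+3$ vertices: the path $1,\ldots,n$ together with pendant vertices $a,b$ at vertex $1$ and $c,d$ at vertex $n$. I relabel so that this matches the figure, whose middle path has $n-2$ vertices; with the corollary's $n$ the quotient is then a weighted path on $n+2$ vertices, with weights $\sqrt{2}$ on both end edges.

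\textbf{Step 1 (the quotient).} I check that $\Pi''$ is equitable: $a$ and $b$ each have exactly one neighbour, in the cell of vertex $1$; vertex $1$ has two neighbours in $\{a,b\}$; and symmetrically at the other end. So $c_{12}=1$ and $c_{21}=2$, which gives the weight $\sqrt{2}$; interior edges keep weight $1$. Hence the symmetrized quotient is the weighted path $P_{n+2}$ with $w(1,2)=w(n+1,n+2)=\sqrt{2}$. By the preceding theorem, applied with $n+2$ in place of $n$, this path admits vertex PGST iff $n+1=2^k$ with $k\geq1$.

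\textbf{Step 2 (transfer between the quotient and $G$).} A vertex $j$ of $P_n$ is a singleton cell, so its quotient state $\e_{V_j}$ is just $\e_j$. The cell states of $\{a,b\}$ and $\{c,d\}$ are the plus states $\frac{1}{\sqrt2}(\e_a+\e_b)$ and $\frac{1}{\sqrt2}(\e_c+\e_d)$. By \cite[Remark 5]{god8}, PGST between two cells of the quotient is equivalent to PGST between the corresponding cell states in $G$. I also note that $U_G(t)$ commutes with the projection onto the cell space, so a state starting at a vertex of $P_n$ stays in the span of the cell vectors. It follows that any PGST in $G$ from a vertex $j$ of $P_n$ must have a target in that span, and since the target must be a vertex state, it is a singleton cell, i.e.\ another vertex of $P_n$.

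\textbf{Step 3 (matching the two directions).} The preceding theorem yields PGST somewhere in the quotient path when $n+1=2^k$. The issue is whether it occurs between two singleton cells or involves an end cell. By the mirror symmetry of the path, and following the underlying Theorem \ref{5t24} for $C_{2n+2}$, PGST is between antipodal vertices of the cycle. These correspond to end-to-end pairs (the cells $\{0\}$ and $\{n+1\}$ in cycle labelling, i.e.\ the two end cells of the quotient) and to symmetric interior pairs. I would check which of these occur, noting that in this graph the singleton cells are the interior ones. Conversely, if some vertex of $P_n$ has PGST in $G$, the quotient path has PGST, which forces $n+1=2^k$.

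I expect the main obstacle to be this bookkeeping: whether PGST in the quotient genuinely occurs between singleton cells, and the small cases such as $k=1$.
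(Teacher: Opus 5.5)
Your reduction is the paper's own: the partition $\Pi''$, the symmetrized quotient equal to the weighted path of Figure~\ref{5fig5}(b), then \cite[Remark 5]{god8}. Steps 1 and 2 are correct, and together they give the ``only if'' direction.

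The gap is the ``if'' direction, which you leave at ``I would check which of these occur''. Knowing that the quotient $P_{n+2}$ has PGST somewhere is not enough. Its end-to-end PGST is vertex PGST between $0$ and $n+1$ in $C_{2n+2}$. In $G$ that is plus PGST between $\frac{1}{\sqrt{2}}(\e_a+\e_b)$ and $\frac{1}{\sqrt{2}}(\e_c+\e_d)$, not transfer from a vertex of $P_n$. Your suggested route through Theorem~\ref{5t24} also fails as stated. For $1\le j\le n$ the vertex state $\e_j$ of $C_{2n+2}$ is not a cell vector: the vertex $j$ of $P_n$ corresponds to the plus state $\frac{1}{\sqrt{2}}(\e_j+\e_{-j})$ of the cycle, not to a vertex of it.

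The missing step goes as follows.
Put $N=2n+2$. The cells $\{0\},\{\pm 1\},\ldots,\{\pm n\},\{n+1\}$ of $C_N$ give the same symmetrized quotient as $(G,\Pi'')$. Applying \cite[Remark 5]{god8} to both graphs, vertex PGST in $G$ between $j$ and $l$ is equivalent to PGST in $C_N$ between $\frac{1}{\sqrt{2}}(\e_j+\e_{-j})$ and $\frac{1}{\sqrt{2}}(\e_l+\e_{-l})$.

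If $n+1=2^k$, then $N=2^{k+1}$, and Lemma~\ref{5t26} applies. It is Theorem~\ref{5t24} symmetrized: the reflection $x\mapsto -x$ commutes with $U(t)$, so the same $t_k$ and $\gamma$ serve for $\e_{-j}$, and one adds. For $j\neq \frac{N}{4}=\frac{n+1}{2}$ it gives PGST to $\frac{1}{\sqrt{2}}(\e_{j+N/2}+\e_{-j+N/2})$. Since $\{j+\frac{N}{2},\,-j+\frac{N}{2}\}=\{\pm(n+1-j)\}$ modulo $N$, vertex $j$ has PGST to vertex $n+1-j$ in $G$. The paper's one-line proof relies on exactly this, implicitly, through Theorem~\ref{5th10} inside the weighted-path theorem.

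The middle vertex $\frac{n+1}{2}$ never has PGST. The reflection of $G$ fixes it and no other vertex, which rules it out by the automorphism criterion of \cite[Corollary 9.2]{god1}. So a usable $j$ exists precisely when $n\ge 2$.

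Hence $k=1$ is not a routine small case but a genuine exception. For $n=1$, $G$ is a star centred at the only vertex of $P_1$. The quotient weighted $P_3$ has PST only between its end cells $\{a,b\}$ and $\{c,d\}$, and the centre has no PGST. The statement must be read with $n\ge 2$, i.e.\ effectively $k\ge 2$.
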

For $n=2,$ the graph $G$ described in Corollary \ref{5t31} becomes a double star $S_{2,2},$ and the result is consistent with the result \cite[Theorem 5.3(b)]{fan}. Furthermore, when $n=3,$ the graph $G$ is the 1-sum of star graph $F_{2,2},$ and the conclusion agrees with \cite[Lemma 3.3]{hou}.

Note that a path $P_n$ with potential $1$ only at the end vertices can
 be realized as a symmetrized quotient graph of the even cycle. Hence,  using Theorem \ref{5th10} and \cite[Remark 5]{god8}, we have the following observation.

 \begin{cor}
     Let $P_n$ be a path on $n$ vertices with potential $1$ only at the end vertices. Then $P_n$ admits pretty good vertex state transfer if and only if $n=2^k,$ with $k\geq 1.$
 \end{cor}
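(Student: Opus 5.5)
The plan is to realize the path with end potentials as a symmetrized quotient of an even cycle. Then \cite[Remark 5]{god8} turns vertex PGST in $P_n$ into plus PGST in the cycle, which is settled by Theorem \ref{5th10}.

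Consider the cycle $C_{2n}$ with vertex set $\mathbb{Z}_{2n}$ and the partition $\Pi$ with cells $V_j=\{j-1,\,2n-j\}$ for $1\leq j\leq n$. This is the orbit partition of the reflection $x\mapsto -1-x$, so it is equitable, and I would verify the quotient directly:
\begin{itemize}
\item Every cell has exactly two elements, since $j-1\equiv 2n-j$ would force $2j-1\equiv 0 \pmod{2n}$, which is impossible because $2j-1$ is odd.
\item Each vertex of $V_j$ has exactly one neighbour in $V_{j\pm1}$ whenever that cell exists, so the off-diagonal quotient entries are $\sqrt{1\cdot 1}=1$.
\item The two vertices of $V_1=\{0,2n-1\}$ are adjacent, as are the two vertices of $V_n=\{n-1,n\}$; each vertex of these cells has one neighbour inside its own cell, giving diagonal entries $1$ at cells $1$ and $n$.
\item All other diagonal entries are $0$.
\end{itemize}
Hence $C_{2n}/\Pi$ is exactly $P_n$ with potential $1$ only at the end vertices. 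By \cite[Remark 5]{god8}, $P_n$ has PGST between $j$ and $l$ if and only if $C_{2n}$ has plus PGST between $\frac{1}{\sqrt2}(\e_{j-1}+\e_{2n-j})$ and $\frac{1}{\sqrt2}(\e_{l-1}+\e_{2n-l})$.

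For the \emph{only if} direction, suppose $P_n$ admits vertex PGST (so $n\geq 2$). Then $C_{2n}$ admits plus PGST, and Theorem \ref{5th10} gives $2n=2^{m}$ with $m\geq 2$, that is, $n=2^k$ with $k\geq1$.

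For the \emph{if} direction, let $n=2^k$ with $k\geq 1$, so that $2n=2^{k+1}$ with $k+1\geq 2$. By Theorem \ref{5th10}, every plus state of $C_{2n}$ exhibits plus PGST, so I need to check that the target state is again a cell state. For a cell $V_j$, the difference $(j-1)-(2n-j)\equiv 2j-1$ is odd and hence never equals $n$. So the two vertices are non-antipodal, and by Lemma \ref{5l9} (or Lemma \ref{5t26}) the target is $\frac{1}{\sqrt2}(\e_{j-1+n}+\e_{n-j})$. This is the cell $V_{n+1-j}=\{n-j,\,n+j-1\}$. Moreover $j\neq n+1-j$, since $n$ is even, so the two vertex states are linearly independent. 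Thus $P_n$ has PGST between $j$ and $n+1-j$, i.e.\ between mirror vertices.

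The main point requiring care is this last identification: the plus-PGST partner of a cell state must again be a cell of $\Pi$, and must be a different cell. The antipodal shift $x\mapsto x+n$ commutes with the reflection $x\mapsto -1-x$, so it maps cells to cells, which is what makes the identification work.
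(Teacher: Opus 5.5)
Your proof is correct and follows the same route as the paper. You realize $P_n$ with end potentials as the symmetrized quotient of $C_{2n}$, transfer vertex PGST to plus PGST via \cite[Remark 5]{god8}, and settle the cycle case with Theorem~\ref{5th10}; beyond the paper's one-line argument, you also make explicit the partition $V_j=\{j-1,2n-j\}$ and check, via Lemma~\ref{5t26}, that the plus-PGST partner of a cell state is a different cell $V_{n+1-j}$.
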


 \section*{Disclosure statement}
  No potential conflict of interest was reported by the author(s).

\section*{Acknowledgements}
 We sincerely thank the reviewers for their insightful comments and valuable suggestions to improve the manuscript. 
S. Mohapatra is supported by the Department of Science and Technology (INSPIRE: IF210209).


\bibliographystyle{abbrv}
\bibliography{references}

\begin{thebibliography}{10}

\bibitem{alv}
R.~Alvir, S.~Dever, B.~Lovitz, J.~Myer, C.~Tamon, Y.~Xu, and H.~Zhan.
\newblock Perfect state transfer in {L}aplacian quantum walk.
\newblock {\em J. Algebraic Combin.}, 43(4):801--826, 2016.

\bibitem{bose}
S.~Bose.
\newblock Quantum communication through an unmodulated spin chain.
\newblock {\em Phys. Rev. Lett.}, 91:207901, 2003.

\bibitem{bro}
A.~E. Brouwer and W.~H. Haemers.
\newblock {\em Spectra of graphs}.
\newblock Universitext. Springer, New York, 2012.

\bibitem{chan2}
A.~Chan, G.~Coutinho, C.~Tamon, L.~Vinet, and H.~Zhan.
\newblock Quantum fractional revival on graphs.
\newblock {\em Discrete Appl. Math.}, 269:86--98, 2019.

\bibitem{cha3}
A.~Chan, W.~Drazen, O.~Eisenberg, M.~Kempton, and G.~Lippner.
\newblock Pretty good quantum fractional revival in paths and cycles.
\newblock {\em Algebr. Comb.}, 4(6):989--1004, 2021.

\bibitem{chan3}
A.~Chan, B.~Johnson, M.~Liu, M.~Schmidt, Z.~Yin, and H.~Zhan.
\newblock Laplacian fractional revival on graphs.
\newblock {\em Electron. J. Comb.}, 28(3):Paper 3.22, 2021.

\bibitem{che1}
Q.~Chen and C.~Godsil.
\newblock Pair state transfer.
\newblock {\em Quantum Inf. Process.}, 19(9):Paper No. 321, 30, 2020.

\bibitem{cou}
G.~Coutinho and C.~Godsil.
\newblock Perfect state transfer in products and covers of graphs.
\newblock {\em Linear Multilinear Algebra}, 64(2):235--246, 2016.

\bibitem{fan}
X.~Fan and C.~Godsil.
\newblock Pretty good state transfer on double stars.
\newblock {\em Linear Algebra Appl.}, 438(5):2346--2358, 2013.

\bibitem{farhi}
E.~Farhi and S.~Gutmann.
\newblock Quantum computation and decision trees.
\newblock {\em Phys. Rev. A (3)}, 58(2):915--928, 1998.

\bibitem{god1}
C.~Godsil.
\newblock State transfer on graphs.
\newblock {\em Discrete Math.}, 312(1):129--147, 2012.

\bibitem{god2}
C.~Godsil.
\newblock When can perfect state transfer occur?
\newblock {\em Electron. J. Linear Algebra}, 23:877--890, 2012.

\bibitem{god8}
C.~Godsil, S.~Kirkland, S.~Mohapatra, H.~Monterde, and H.~Pal.
\newblock Quantum walks on finite and bounded infinite graphs.
\newblock {\em arXiv preprint arXiv:2510.05306}, 2025.

\bibitem{god7}
C.~Godsil, S.~Kirkland, and H.~Monterde.
\newblock Perfect state transfer between real pure states.
\newblock {\em SIAM J. Matrix Anal. Appl.}, 46(3):2093--2115, 2025.

\bibitem{hou}
H.~Hou, R.~Gu, and M.~Tong.
\newblock Pretty good state transfer on 1-sum of star graphs.
\newblock {\em Open Math.}, 16(1):1483--1489, 2018.

\bibitem{jia}
M.~Jiang, X.~Liu, and J.~Wang.
\newblock Pair state transfer in tensor product and double cover.
\newblock {\em Discrete Appl. Math.}, 384:165--176, 2026.

\bibitem{kem1}
M.~Kempton, G.~Lippner, and S.-T. Yau.
\newblock Perfect state transfer on graphs with a potential.
\newblock {\em Quantum Inf. Comput.}, 17(3\&4):303--327, 2017.

\bibitem{kem2}
M.~Kempton, G.~Lippner, and S.-T. Yau.
\newblock Pretty good quantum state transfer in symmetric spin networks via
  magnetic field.
\newblock {\em Quantum Inf. Process.}, 16(9):210, 2017.

\bibitem{kim}
S.~Kim, H.~Monterde, B.~Ahmadi, A.~Chan, S.~Kirkland, and S.~Plosker.
\newblock A generalization of quantum pair state transfer.
\newblock {\em Quantum Inf. Process.}, 23(11):Paper No. 369, 2024.

\bibitem{kirk5}
S.~Kirkland and C.~M. van Bommel.
\newblock State transfer on paths with weighted loops.
\newblock {\em Quantum Inf. Process.}, 21(6):209, 2022.

\bibitem{pal11}
H.~Monterde and H.~Pal.
\newblock Perfect state transfer on graphs with clusters.
\newblock {\em arXiv e-prints}, page arXiv:2505.07982v3, 2025.

\bibitem{ojha1}
S.~Ojha and H.~Pal.
\newblock $ q $-laplacian state transfer on graphs with involutions.
\newblock {\em arXiv preprint arXiv:2509.20749}, 2025.

\bibitem{pal8}
H.~Pal.
\newblock Laplacian state transfer on graphs with an edge perturbation between
  twin vertices.
\newblock {\em Discrete Math.}, 345(7):112872, 2022.

\bibitem{pal4}
H.~Pal and B.~Bhattacharjya.
\newblock Pretty good state transfer on circulant graphs.
\newblock {\em Electron. J. Combin.}, 24(2):Paper No. 2.23, 13, 2017.

\bibitem{pal10}
H.~Pal and S.~Mohapatra.
\newblock Quantum pair state transfer on isomorphic branches.
\newblock {\em arXiv preprint arXiv:2402.07078}, 2024.

\bibitem{vin}
L.~Vinet and A.~Zhedanov.
\newblock Almost perfect state transfer in quantum spin chains.
\newblock {\em Phys. Rev. A}, 86(5):052319, 2012.

\end{thebibliography}

\end{document}